\newtheorem{theorem}{Theorem}[section]
\newtheorem{lemma}[theorem]{Lemma}
\newtheorem{corollary}[theorem]{Corollary}
\newtheorem{proposition}[theorem]{Proposition}
\providecommand{\customgenericname}{}
\newcommand{\newcustomtheorem}[2]{%
  \newenvironment{#1}[1]
  {%
   \renewcommand\customgenericname{#2}%
   \renewcommand\theinnercustomgeneric{##1}%
   \innercustomgeneric
  }
  {\endinnercustomgeneric}
}
\newtheorem{definition}[theorem]{Definition}
\newtheorem{remark}[theorem]{Remark}
\newtheorem{example}[theorem]{Example}
\newcommand{\I}{{\mathbbm{1}}}
\newcommand{\M}{\mathcal{M}}
\newcommand{\cH}{\mathcal{H}}
\newcommand{\cM}{\mathbb{M}}
\newcommand{\tcM}{\widetilde{\mathbb{M}}}
\begin{document}
\title[]{Quantum Fokker-Planck dynamics}

\author{Louis Labuschagne}

\address{DSI-NRF CoE in Math. and Stat. Sci,\\ Focus Area for Pure and Applied Analytics,\\ Internal Box 209, School of School of Math. $\&$ Stat. Sci.\\NWU, PVT. BAG X6001, 2520 Potchefstroom\\ South Africa}
\email{Louis.Labuschagne@nwu.ac.za}

\author{W Adam Majewski}

\address{DSI-NRF CoE in Math. and Stat. Sci,\\ Focus Area for Pure and Applied Analytics,\\ Internal Box 209, School of School of Math. $\&$ Stat. Sci.\\NWU, PVT. BAG X6001, 2520 Potchefstroom\\ South Africa}
\email{fizwam@gmail.com}

\keywords{Markov semigroups, noncommutative $L^p$-spaces, Fokker-Planck dynamics, Csiszar-Kullback inequality, von Neumann algebras}

\subjclass{Primary: 46L55, 47D07; Secondary: 46L51, 46N50, 46L57}
\date{\today}

\begin{abstract}
The Fokker-Planck equation is a partial differential equation which is a key ingredient in many models in physics. This paper aims to obtain a quantum counterpart of Fokker-Planck dynamics, as a means to describing quantum Fokker-Planck dynamics. 
Given that relevant models relate to the description of large systems, the quantization of the Fokker-Planck equation should be done in a manner that respects this fact, and is therefore carried out within the setting of non-commutative analysis based on general von Neumann algebras. Within this framework we present a quantization of the generalized Laplace operator, and then go on to incorporate a potential term conditioned to noncommutative analysis. In closing we then construct and examine the asymptotic behaviour of the corresponding Markov semigroups. We also present a noncommutative Csiszar-Kullback inequality formulated in terms of a notion of relative entropy, and show that for more general systems, good behaviour with respect to this notion of entropy ensures similar asymptotic behaviour of the relevant dynamics.
\end{abstract}
\maketitle
\bigskip

\section{Introduction}
The Fokker-Planck equation is a partial differential equation that describes the time evolution of some basic models in classical statistical mechanics. It is worth pointing out that one of the fundamental problems in the kinetic theory of matter is the analysis of convergence to equilibrium for a given time evolution. In particular, such analysis is using the time monotonicity of the physical entropy. A nice  example of such analysis is provided by the seminal paper \cite{AMTU}, where additionally the strategy based on carr{\'e} du champ was used. We recall that carr{\'e} du champ and its implications  play a central role in the analysis and geometry of Markov diffusion operators in the Bakry-Emery theory, see \cite{BGL}.
In particular, this concept is a useful way of characterizing the infinitesimal behaviour of a Markov semigroup.

However, the fundamental theory in Physics is quantum mechanics. Because of this fact some quantum Fokker-Planck models were studied recently, see \cite{AF}, \cite{ALMS}. The analysis of the above models was based on the phase-space formulation of quantum mechanics. It leads to Wigner functions as well as to the Wigner-Fokker-Planck equations. We note that the phase space formulation of quantum mechanics, although not equivalent to the original Heisenberg representation, has the advantage that it makes quantum mechanics appear  similar to the classical  Hamiltonian mechanics. In particular, the phase-space formulation examines of some differential equations similar to that in classical statistical mechanics.

However, it seems that large quantum systems (statistical mechanics with the thermodynamic limit, quantum field theory) are not in the scope of this formulation, see \cite{Y}.  Therefore, to study genuine quantum models we will consider the von Neumann algebra generated by the bounded observables as the primary object for a mathematical formalism of quantum theory.  One of the salient aspects of the algebraic quantum mechanical description is the non-commutative calculus. In other words, we have to use both non-commutative integration and non-commutative differential theory, cf \cite{Maj2}, to achieve a truly quantum theory. In particular, the Hilbert space $L^2(\mathcal{M})$ consisting of selected  quantum measurable operators will replace the  ``classical''  $L^2$- Hilbert space. The important point to note here is that to define carr{\'e} du champ one needs $^*$-subalgebras in this $L^2$-space. As Haagerup's $L^2(\mathcal{M})$ does not have this property, the Bakry-Emery strategy is not immediately extendable for genuine quantum systems in quantum statistical mechanics.

Fortunately, for such general systems, the relative entropy is well defined, see \cite{Ar}. In the presented analysis,  following  the idea of non-commutative perspectives,  see \cite{E}, \cite{EH}, we will use the relative operator entropy $S(A|B)$ cf \cite{FK1}, \cite{FK2}, \cite{Petz}. In other words, the prescription for quantum relative entropy will be slightly modified.  Furthermore, the above modification of quantum relative entropy together with the theory of quantum Markov semigroups seems to be best adapted to carry out the "quantization" of the analysis given in Arnold et al paper \cite{AMTU}.

But to this end one needs to quantize the Laplace operator and to describe the associated quantum Markov semigroups. In particular, quantum Dirichlet forms which are related to Markov semigroups should be examined. We recall that the need to quantize Dirichlet forms as well as Markovian semigroups were recognized a long time ago, both in Quantum Field Theory and Quantum Statistical Mechanics, see \cite{Cip} for a detailed account of these achievements.

However, as the basic objective of this paper is to put in place a line of attack for producing a quantum version of key parts of \cite{AMTU}, the quantization of generalised Laplace operators within the framework of Haagerup's $L^2$-spaces will be essential.

The idea of quantization of diffusive processes within a non-commutative $L^2$-space  goes back at least as far as to \cite{MZ1, MZ2}. However, the construction of quantum diffusive dynamics, which appeared in \cite{MZ1, MZ2}, was done for lattice systems under very strong abelianess conditions. This approach was amended by Park \cite{P1,P2}. It was carried out on the standard forms of von Neumann algebras. However, Park's quantization was based on maps which only by certain abuse of language can be called derivatives.

As the Fokker-Planck equation is designed for general continuous systems, it is natural to begin with a general form of non-commutative derivations. In particular, this explains why we will not follow the \cite{CS} strategy where the first order differential calculus is associated to regular Dirichlet forms on $C^*$-algebras.

But, as our primary goal is quantization and analysis of quantum Fokker-Planck equation, we will select those derivations which are compatible with dynamical maps, see Theorem 3.2.50 in \cite{BR}. This will be done in Section \ref{Lapdyn}. Furthermore, to include a potential, as it was done in Fokker-Planck equation, quantum $L^2$-Markov semigroups corresponding to selected potentials will be examined and described.

The paper is organized as follows: In Section \ref{s2} we recall the essentials of non-commutative $L^p$-integration theory. Then in section \ref{markov} we pass to an analysis of the general theory of $L^p$-Markov semigroups. Section \ref{PSP} then builds on that theory to demonstrate the existence of Markov semigroups induced by some selected potentials. In Section \ref{Lapdyn} we proceed with the study of quantum Laplacian Markov semigroups. In particular, we show how the derivations comprising the quantum Laplacian may be selected to ensure that the Laplacian leads to well defined Markov semigroups on $L^2$-space.
With all the underlying theory now in place, Section \ref{FPD} then provides a detailed exposition of Quantum Fokker-Planck dynamics. Here Theorem \ref{FPgen} is the key result describing Quantum Fokker-Planck dynamics.
As was mentioned at the beginning of this section, convergence to equilibrium is a fundamental problem in the kinetic theory. In recognition of this fact, Section \ref{asymp} is devoted to the study of the asymptotic behaviour of the considered dynamics. We then pass to the challenge of finding conditions which guarantee similar asymptotic behaviour in more general systems. This is achieved by firstly introducing the notion of quantum Csiszar-Kullback inequality in Section 
\ref{qcki}, which is then used in Section \ref{genconv} to describe situations where we still get convergence to equilibrium.  

Finally, we want to clarify the relation of the proposed quantization of Fokker-Planck dynamics to Quantum Physics. To begin with, we recall that in Dirac's program for the mathematical formulation of the fundamentals of Quantum Mechanics (see the opening chapters in \cite{Dir}), the time evolution should be based on differentiation leading to differential equations. In particular, when this ideology of Dirac is considered alongside some of the techniques and advances from the theory of operator algebras, one can see that when applied to time evolving observables, differentiation leads derivations of the system. For details see Section 1 of \cite{KadLiu}. Consequently, derivations constitute a milestone in the analysis of the dynamics of quantum systems.

This approach was applied to a description of the dynamics of (infinite) quantum spin systems - see Section 6.2 in \cite{BR} and also \cite{Araki} and references given there for a more recent account .

Turning to (infinite) quantum continuous systems, free Fermi or Bose gas, we note that Park and his school developed an approach based on quantum Dirichlet forms to study dynamics for such systems, see \cite{BKP1}, \cite{BKP2} and/or \cite{Cip}. However, as was mentioned earlier in Introduction, these forms do not have simple connections to derivatives. Therefore bearing the successful applications of derivatives to the description of quantum spin systems in mind, the quantization of Fokker-Planck dynamics begins with derivatives. Subsequently, the relations among Dirichlet forms and Markovianity are employed. Consequently, it can be said that our approach is faithful to the quantization procedure of dynamical systems.

\section{Preliminaries}\label{s2}

Throughout this paper $\M$ will be a $\sigma$-finite von Neumann algebra equipped with a faithful normal state $\nu$. For the construction of quantum $L^p$-spaces associated with such an algebra, we shall pass to the crossed product $\cM=\M\rtimes_\nu\mathbb{R}$. Recall that $\cM$ is semifinite and that it admits a dual action $\theta_s$ 
($s\in\mathbb{R}$) of the reals. The crossed product also admits a canonical faithful normal semifinite trace $\tau$ characterised by $\tau\circ \theta_s = e^{-s}\tau$. So $\cM$ may be enlarged to the algebra of $\tau$-measurable operators $\tcM$, with $L^p(\M)$ defined by $L^p(\M)=\{a\in\tcM\colon \theta_s(a)=e^{-s/p}a\mbox{ for all }s\in\mathbb{R}\}$. It is known that $L^\infty(\M)$ is a *-isomorphic copy of $\M$. The space $L^1(\M)$ admits a trace functional $tr$ which can be used to develop a theory of $L^p$-duality. For the case $1\leq p<\infty$, the $L^p$-norm is given by $\|a\|_p=tr(|a|^p)^{1/p}$.

There further exists a so called ``operator valued weight'' $\mathscr{W}$ from $\cM_+$ onto the extended positive part of $\M$ 
(see \cite[IX.4.4 \& IX.4.12]{Tak2} for these concepts). For any normal semifinite weight $\nu$ on $\M$, 
$\widetilde{\nu}=\nu\circ \mathscr{W}$ will be a normal semifinite weight on $\cM$ - the so-called dual weight of $\nu$. By 
the theory in \cite{PT}, this dual weight is of the form $\tau(h_\nu^{1/2}\cdot h_\nu^{1/2})$ where $\tau$ is the 
canonical trace on the crossed product and where $h_\nu = \frac{d\widetilde{\nu}}{d\tau}$. In the case where $\nu$ is 
a normal state, $\nu$ can now be written as $\omega(a)=tr(h_\nu^{1/2}a h_\nu^{1/2})$, where 
$h_\nu\in L^1(\M)$. In fact on a similar note, for the density $h_\nu$, the prescription $a\to h_\nu^{c/p}a 
h_\nu^{(1-c)/p}$ will for each $0\leq c\leq 1$ define a dense embedding of $\M$ into $L^p(\M)$. In the sequel $h_\nu$ will where convenient simply be denoted by $h$.

We briefly show how the above structure compares to the GNS construction. In this construction one defines an inner product on $\M$ by means of the prescription $\langle a, b\rangle_{\nu}=\nu(b^*a)$. Under this inner product $\M$ becomes a pre-Hilbert space. One then writes $H_\nu$ for the completion of this pre-Hilbert space. To distinguish between $a\in \M$ considered as an element of $\M$ and as an element of $H_\nu$, we follow convention by writing $\eta(a)$ for $a$ considered as an element of $H_\nu$. Then $a\to \eta(a)$ becomes a dense embedding of $\M$ into $H_\nu$. We similarly know that in the above setting, $\{ah^{1/2}\colon a\in \M\}$ is a dense subspace of $L^2(\M)$ \cite[Theorem 7.45]{GLnotes}. Now observe that for any $a,b\in \M$, we have that $\langle a,b\rangle_{\nu} =\nu(b^*a)=tr(h^{1/2}b^*ah^{1/2})=tr((bh^{1/2})^*(ah^{1/2}))=\langle(ah^{1/2}),(bh^{1/2})\rangle$ where the second inner product is the canonical inner product on $L^2(\M)$. Thus the space $L^2(\M)$ is clearly a copy of $H_\nu$ with the densely defined map $\eta(a)\to ah^{1/2}$ extending to a linear isometry. We shall therefore where convenient freely pass from the GNS to the Haagerup setting using the identification $\eta(a)\leftrightarrow ah^{1/2}$

For a recent account of the theory of noncommutative $L^p$-spaces, we refer the reader to \cite{GLnotes}.

\section{$L^p$-Markov semigroups}\label{markov}

We start our investigation with a description of the general theory of (sub)$L^p$-Markov semigroups on Haagerup $L^p$-spaces associated with $\sigma$-finite von Neumann algebras. The fountainhead of this theory is a simple but elegant Radon-Nikodym type result by Schmitt (see Lemma \ref{RNLp}). This is of course not the first time these issues have been addressed. The theory we present in some sense represents an asymmetric version of section 2 of \cite{GL1}. The basic cycle of ideas underlying the theory were worked out in \cite{GL1, GL2}, with \cite[Proposition 5.4]{HJX} filling some gaps in that cycle and broaching the asymmetric theory. The presentation hereafter therefore follows familiar patterns with suitable modifications being made to facilitate the passage to the asymmetric context.

\begin{definition}
Let $p\in [1,\infty ]$ and let $T$ be a densely defined operator on $L^p(\M)$. Then
\begin{itemize}
\item $T$ is \textit{reality preserving} if for every $x\in\mathrm{dom}(T)$ we have that $x^{*}\in \mathrm{dom}(T)$ and $T(x^{*})=(Tx)^{*}$;
\item $T$ is \textit{positivity preserving} if (i) $\mathrm{dom}(T)=\mathrm{span}(\mathrm{dom}(T)^{+})$ 
and (ii) $Tx\geq 0$ for all $x\in \mathrm{dom}(T)_{+}$ (where $\mathrm{dom}(T)^{+}=\mathrm{dom}(T)\cap L^p(\M)^+$);
\item If $T$ is bounded and defined on all of $L^p$ (where $p<\infty$), we say it is ($L^p$-)\textit{sub-Markov} if whenever $0\leq x\leq h^{1/p}$  holds, we have that $0\leq T(x)\leq h^{1/p}$. If additionally $T(h_{\nu}^{1/p}) = h_{\nu}^{1/p}$ we say that it is Markov.
\end{itemize}
\end{definition}

\begin{definition}
A bounded $T$ on $\mathcal{M}$ is said to be $p$-\emph{integrable} (with respect to $\nu$) if the operator $T_0^{(p)}$ 
defined by $T_0^{(p)}\colon  h^{1/2p}ah^{1/2p} \mapsto h^{1/2p}T(a)h^{1/2p}$ is $L^p$-continuous. If $T$ is $p$-integrable 
we write $T^{(p)}$ for the unique continuous extension of $T_0^{(p)}$ to $L^p(\mathcal{M})$. A 1-integrable operator will 
simply be said to be \emph{integrable}. 
\end{definition}

\begin{definition}
A pair of bounded operators $(T,S)$ on $\M$ is said to be a \emph{KMS pair} (with respect to $\nu$) if $tr(Ta(h^{1/2}bh^{1/2}))=tr((h^{1/2}ah^{1/2})Sb)$ for all $a,\,b\in M$.
The operator $S$ may alternatively also be referred to as the $KMS$-adjoint of $T$. We say that $T$ is \emph{KMS-symmetric} if 
$(T,T)$ is a \emph{KMS pair}.
\end{definition}

\begin{definition}\label{DBIIdef}
A positive map $T$ on $\M$ is said to satisfy Detailed Balance II (often abbreviated to DBII) with respect to the faithful normal state $\nu$ if there exists another positive map $T^\flat$ on $\M$ such that $\nu(T(a)b)=\nu(aT^\flat(b))$ for all $a,b\in \M$.
\end{definition}

\begin{remark} There is actually a close relation between the concept of a KMS adjoint and DBII. Note that the the equality 
$\nu(T(a)b)=\nu(aT^\flat(b))$ used in the definition of DBII, may also be written as $$tr((h^{1/2}T(a))(bh^{1/2}))= tr((h^{1/2}a)(T^\flat(b)h^{1/2})),$$whilst the criterion used in the definition of KMS-adjoints, may be written as 
$$tr((h^{1/4}Ta(h^{1/4})(h^{1/4}bh^{1/4}))=tr((h^{1/4}ah^{1/4})(h^{1/4}Sbh^{1/4})).$$Recalling that for $0\leq c\leq 1$, $h^{c/2}\M h^{(1-c)/2}$ is a dense subspace of $L^2(\M)$, both definitions make a statement about how a pair of operators behave with respect to each other when lifted to densely defined operators on $L^2(\M)$ by means of a prescription of the form $h^{c/2}ah^{(1-c)/2} \to h^{c/2}T(a)h^{(1-c)/2}$. The two definitions just use different embeddings of $\M$ into $L^2(\M)$. \end{remark}

We observe

\begin{remark}\label{natcone} If $p=2$ then for the dense subset $\Delta^{1/4} \mathcal{M}_+ h^{1/2}_{\nu}$ of the natural 
cone one has $$\Delta^{1/4} \mathcal{M}_+ h^{1/2}_{\nu} = h_{\nu}^{1/4} \mathcal{M}_+ h^{1/4}_{\nu}.$$
Thus, in the Haagerup setting $L^2_+ (\mathcal{M})$ can be considered as the natural cone.
\end{remark}

\begin{lemma}[Lemma 2.2, \cite{Sch}]\label{RNLp}
If $x\in L^p(\M)$ $(1\leq p\leq \infty$) satisfies $0\leq x\leq h^{1/p}$, then there exists $a\in \M_+$ such that $x=h^{1/2p}ah^{1/2p}$.
\end{lemma}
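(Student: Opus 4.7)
The strategy is to produce $a$ as the bounded middle factor in a Douglas-type factorization of $x$, and then verify, via the dual action, that this factor lies in $\M$ rather than merely in the crossed product $\cM$.

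First I would translate the operator inequality $0 \leq x \leq h^{1/p}$, viewed between positive self-adjoint $\tau$-measurable operators affiliated with $\cM$, into the form inequality $\|x^{1/2}\xi\| \leq \|h^{1/2p}\xi\|$ for $\xi \in \mathrm{dom}(h^{1/2p})$, with automatic inclusion of form domains. A Douglas-style range-inclusion theorem for closed, densely defined operators then furnishes a contraction $C \in B(L^2(\M))$ satisfying $x^{1/2} = C h^{1/2p}$ on $\mathrm{dom}(h^{1/2p})$. Since $\nu$ is faithful, $h$ is injective and $h^{1/2p}$ has dense range, which makes $C$ uniquely determined. Setting $a := C^*C$ then gives $0 \leq a$, $\|a\| \leq 1$, and
\[
x = (x^{1/2})^{*}(x^{1/2}) = h^{1/2p}\, C^{*}C\, h^{1/2p} = h^{1/2p}\, a\, h^{1/2p}.
\]

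Second I would show that $a$ is affiliated with $\cM$. Using the spectral cutoffs $e_n = \chi_{[1/n,n]}(h) \in \cM$, the restriction of $h^{1/2p}$ to the range of $e_n$ is bounded and boundedly invertible, so the identity $Ch^{1/2p} = x^{1/2}$ yields $Ce_n = x^{1/2}\, h^{-1/2p}\, e_n$, a product of a $\tau$-measurable operator with a bounded element of $\cM$, and bounded because $C$ is. Hence $Ce_n \in \cM$ for every $n$. Letting $e_n \uparrow \I$ strongly, one has $Ce_n \to C$ strongly, which places $C$, and therefore $a = C^*C$, in $\cM$.

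Third I would promote affiliation from $\cM$ to $\M$ via the dual action. Applying $\theta_s$ to $x^{1/2} = C h^{1/2p}$ and using $\theta_s(x^{1/2}) = e^{-s/2p}x^{1/2}$ together with $\theta_s(h^{1/2p}) = e^{-s/2p}h^{1/2p}$, the scaling factors cancel and produce $x^{1/2} = \theta_s(C)\, h^{1/2p}$. Uniqueness of $C$ then forces $\theta_s(C) = C$, so $\theta_s(a) = a$ for every $s \in \mathbb{R}$. Because the $\theta$-fixed part of $\cM$ is precisely $L^\infty(\M) \cong \M$, this places $a \in \M$; combined with $0 \leq a \leq 1$ this delivers $a \in \M_+$ with $x = h^{1/2p} a h^{1/2p}$.

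The main obstacle is the affiliation step: Douglas's contraction lives a priori only in $B(L^2(\M))$, which strictly contains $\cM$, so $C \in \cM$ must be argued separately. Once in hand, the $\theta_s$-uniqueness argument to push $a$ into $\M$ is essentially formal, but the spectral-cutoff approximation has to be set up carefully so that the formal product $x^{1/2} h^{-1/2p} e_n$ genuinely defines an element of $\cM$ — its boundedness being inherited from $C$ and not from the individual factors.
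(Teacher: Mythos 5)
The paper itself offers no proof of this lemma—it is quoted verbatim from Schmitt \cite[Lemma 2.2]{Sch}—and your argument (Douglas-type factorization $x^{1/2}=Ch^{1/2p}$ with a uniquely determined contraction $C$, membership of $C$ in $\cM$ via the spectral cutoffs $\chi_{[1/n,n]}(h)$ and strong closedness of the bounded ball, then $\theta_s$-invariance of $C$ by uniqueness plus the fixed-point characterization $L^\infty(\M)\cong\M$, giving $a=C^*C\in\M_+$ and $x=h^{1/2p}ah^{1/2p}$ in $\tcM$) is exactly the standard Schmitt-style route, and it is correct. The one slip is cosmetic rather than substantive: the contraction produced by the Douglas argument lives in $B(\mathcal{H})$ for the Hilbert space on which the crossed product $\cM$ (and hence $\tcM$) acts, not in $B(L^2(\M))$; with that replacement, and with the product identity $x=h^{1/2p}C^*Ch^{1/2p}$ read as an identity of strong products in $\tcM$ once $C\in\cM$ is in hand, every step goes through.
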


The preceding Radon-Nikodym type result, is the key tool in the proof of the following lemma, which shows that all $L^p$-Markov maps are induced by Markov maps on the underlying von Neumann algebra.

\begin{lemma}[Proposition 2.5,\cite{GL1}]\label{LpMarkov}
Let $\mathcal{T}$ be an $L^p$-Markov operator on $L^p(\M)$ ($1\leq p\leq \infty$). Then there exists a Markov operator $T$ on $\M$ such that $h^{1/2p}T(a)h^{1/2p}=\mathcal{T}(h^{1/2p}ah^{1/2p})$ for all $a\in \M$.
\end{lemma}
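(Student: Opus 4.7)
The plan is to invoke Lemma \ref{RNLp} to ``pull back'' the action of $\mathcal{T}$ from $L^p(\M)$ to a map on $\M$, and then verify the structural properties of the resulting map one by one.

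First I would handle the trivial case $p=\infty$ separately: there $h^{1/2p}=1$, so $T=\mathcal{T}$ works. For $1\le p<\infty$, I would fix $a\in\M_+$ with $\|a\|\le 1$. Then $0\le h^{1/2p}ah^{1/2p}\le h^{1/p}$, and the sub-Markov property gives $0\le \mathcal{T}(h^{1/2p}ah^{1/2p})\le h^{1/p}$. Lemma \ref{RNLp} then furnishes some $b\in\M_+$ with $\|b\|\le 1$ such that $\mathcal{T}(h^{1/2p}ah^{1/2p})=h^{1/2p}bh^{1/2p}$. Setting $T(a)=b$ produces the candidate map. Well-definedness of $T$ requires injectivity of the ``sandwich'' map $a\mapsto h^{1/2p}ah^{1/2p}$ from $\M$ to $L^p(\M)$; this is where the proof draws on the fact that $\nu$ is faithful, so that the density $h$ is nonsingular (has trivial kernel), and hence $h^{1/2p}c h^{1/2p}=0$ forces $c=0$ for $c\in\M$.

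Next I would extend $T$ from the unit ball of $\M_+$ to all of $\M$. Positive homogeneity $T(\lambda a)=\lambda T(a)$ ($\lambda\ge 0$) and additivity $T(a_1+a_2)=T(a_1)+T(a_2)$ on $\M_+$ both follow directly from the linearity of $\mathcal{T}$ together with the injectivity just noted; this lets me extend $T$ to $\M_+$, then to $\M_{\mathrm{sa}}$ by the Jordan decomposition $a=a_+-a_-$ (with a routine check that the extension is independent of the decomposition), and finally to $\M$ via $T(a)=T(\mathrm{Re}\,a)+iT(\mathrm{Im}\,a)$. The resulting map is positive by construction and satisfies $0\le T(a)\le 1$ whenever $0\le a\le 1$, which is enough to guarantee that $T$ is bounded with $\|T\|\le 1$.

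Finally, the Markov property $T(\mathbbm{1})=\mathbbm{1}$ follows by taking $a=\mathbbm{1}$ in the defining identity: $h^{1/2p}T(\mathbbm{1})h^{1/2p}=\mathcal{T}(h^{1/p})=h^{1/p}=h^{1/2p}\cdot\mathbbm{1}\cdot h^{1/2p}$, and injectivity of the sandwich map forces $T(\mathbbm{1})=\mathbbm{1}$. I expect the main obstacle to be the injectivity step, since it hinges on the (mildly delicate) unbounded operator-theoretic fact that multiplication on both sides by the nonsingular positive operator $h^{1/2p}$ remains injective on $\M$; once that is disposed of, the remaining extension and verification arguments are essentially algebraic and proceed smoothly.
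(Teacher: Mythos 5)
Your proposal is correct and follows essentially the same route as the paper's own (sketched) argument: apply Lemma \ref{RNLp} to elements $0\leq a\leq\I$ to produce the candidate $T(a)$, check affinity/additivity on the positive cone, and extend linearly to all of $\M$, with the Markov properties inherited from those of $\mathcal{T}$. The extra details you supply (injectivity of the embedding $a\mapsto h^{1/2p}ah^{1/2p}$, which holds since $\nu$ is faithful and is implicit in the paper's use of the word ``embedding'', the Jordan-decomposition extension, and unitality via $\mathcal{T}(h^{1/p})=h^{1/p}$) simply flesh out steps the paper leaves to the reader.
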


\begin{proof} For any $a\in \M$ with $0\leq a\leq\I$, we have that $0\leq h^{1/2p}ah^{1/2p}\leq h^{1/p}$ and hence that $0\leq \mathcal{T}(h^{1/2p}ah^{1/2p})\leq h^{1/p}$ since $\mathcal{T}$ is $L^p$-Markov. So there exists $a_T$ such that $h^{1/2p}a_Th^{1/2p}=\mathcal{T}(h^{1/2p}ah^{1/2p})$. The fact that $0\leq \mathcal{T}(h^{1/2p}ah^{1/2p})\leq h^{1/p}$, ensures that $0\leq a_T\leq \I$. Then next task is to show that $a\to a_T$ is an affine map on $\M_+$. This map then extends to a linear map $T$ on $\M$ which preserves the positive cone $\M_+$ and for which we have that $0\leq T(a)\leq \|a\|\I$ for all $a\in\M_+$. So $T$ is a Markov map.
\end{proof}

The next two lemmata and the theorem that follows capture the essential information encoded in \cite[Proposition 5.4]{HJX}. The first lemma is a ``KMS-version'' of the corresponding fact for positive maps satisfying DBII with the two proofs following similar paths. Its value is in providing sufficient conditions for integrability of Markov operators. It represents an asymmetric version of \cite[Proposition 4.5]{GL2}.

\begin{lemma}\label{intmar}
Let $\mathcal{T}$ be an $L^2$-Markov operator for which $\mathcal{T}^*$ is also $L^2$-Markov. Then the operator $T$ described in the preceding lemma satisfies $\nu\circ T\leq\nu$.
\end{lemma}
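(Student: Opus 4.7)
My approach would be to transform the desired inequality into a pairing inequality on $L^2(\M)$ by invoking the identification $\nu(b) = tr(h^{1/2} b h^{1/2})$, and then exploit the $L^2$-sub-Markov property of $\mathcal{T}^*$ applied to $h^{1/2}$ itself.

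For $a\in \M_+$, I would set $x = h^{1/4}ah^{1/4}\in L^2(\M)_+$. Lemma \ref{LpMarkov} (taken at $p=2$) supplies the identity $\mathcal{T}(x) = h^{1/4}T(a)h^{1/4}$. Writing $\nu(T(a)) = tr(h^{1/2}T(a)h^{1/2}) = tr(h^{1/4}\mathcal{T}(x)h^{1/4})$ and then invoking cyclicity of the $L^1$-trace produces the $L^2$-pairing $\langle \mathcal{T}(x), h^{1/2}\rangle$, where $\langle f, g\rangle = tr(g^*f)$ and $h^{1/2}$ is self-adjoint.

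From here I would move $\mathcal{T}$ across via the Hilbert-space adjoint, obtaining $\langle x, \mathcal{T}^*(h^{1/2})\rangle$. The sub-Markov condition on $\mathcal{T}^*$, applied to the trivially valid inequality $0\leq h^{1/2}\leq h^{1/2}$, yields $0\leq \mathcal{T}^*(h^{1/2})\leq h^{1/2}$. Thus $\mathcal{T}^*(h^{1/2}) = h^{1/2}-z$ with $z\in L^2(\M)_+$, and the pairing splits as $\langle x, h^{1/2}\rangle - \langle x, z\rangle$. The first term equals $tr(h^{1/2}h^{1/4}ah^{1/4}) = tr(ha) = \nu(a)$ after one more cyclic permutation, while $\langle x, z\rangle = tr(z^{1/2}xz^{1/2})$ is non-negative as the trace of a positive $L^1$-operator. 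This delivers $\nu(T(a))\leq \nu(a)$.

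The only genuine technicality is the repeated cyclic reshuffling under $tr$ when the factors are only $\tau$-measurable rather than bounded. This is routine but deserves brief justification: H\"older's inequality on the Haagerup $L^p$-scale ensures each intermediate product lies in $L^1(\M)$, on which $tr$ is a bounded trace and hence invariant under cyclic permutations. Everything else is a short formal manipulation.
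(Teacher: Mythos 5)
Your argument is the same as the paper's: both compute $\nu(T(a))=tr(h^{1/2}\mathcal{T}(h^{1/4}ah^{1/4}))$, pass to $\mathcal{T}^*(h^{1/2})$ via the Hilbert-space adjoint, and conclude from $0\leq\mathcal{T}^*(h^{1/2})\leq h^{1/2}$ together with positivity of $h^{1/4}ah^{1/4}$. Your explicit splitting $\mathcal{T}^*(h^{1/2})=h^{1/2}-z$ and the remark on cyclicity of $tr$ are just slightly more detailed presentations of the identical steps.
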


\begin{proof} For any $a\in \M_+$ we have that 
$$\nu(T(a))=tr(h^{1/2}T(a)h^{1/2})= tr(h^{1/2}\mathcal{T}(h^{1/4}ah^{1/4}))= tr(\mathcal{T}^*(h^{1/2})(h^{1/4}ah^{1/4}))$$ 
$$\leq tr(h^{1/2}(h^{1/4}ah^{1/4})) = tr(h^{1/2}ah^{1/2})=\nu(a).$$
\end{proof}

Our next lemma is an asymmetric version of \cite[Proposition 2.4]{GL1} for the case $p=1$.
 
\begin{lemma}
Let $\mathcal{T}$ be an $L^2$-Markov operator for which $\mathcal{T}^*$ is also $L^2$-Markov, and let $T$ and $S$ be the 
Markov operators on $\M$ corresponding to $\mathcal{T}$ and $\mathcal{T}^*$ respectively. Then the prescription 
$h^{1/2}ah^{1/2}\to h^{1/2}S(a)h^{1/2}$ ($a\in \M$) extends to a contractive map $S^{(1)}$ on $L^1(\M)$ for which the Banach 
dual is $T$. 
\end{lemma}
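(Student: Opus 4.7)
The plan is to derive both the contractivity of $S^{(1)}$ and its Banach dual from a single trace-duality identity relating $T$ and $S$. First I would unpack the $L^2$-adjoint relation $\langle \mathcal{T}^\ast x, y\rangle = \langle x, \mathcal{T} y\rangle$ on the vectors $x = h^{1/4} a h^{1/4}$ and $y = h^{1/4} c^\ast h^{1/4}$ for $a,c\in \M$. Invoking Lemma \ref{LpMarkov} (at $p=2$) to replace $\mathcal{T}(y)$ and $\mathcal{T}^\ast(x)$ by the symmetric embeddings of $T(c^\ast)$ and $S(a)$ at exponent $1/4$, using that $T$ is $\ast$-preserving (being a positive linear map on $\M$), and rearranging by trace cyclicity, one arrives at the key identity
\begin{equation*}
tr\bigl(h^{1/2} S(a) h^{1/2}\, c\bigr) \;=\; tr\bigl(h^{1/2} a h^{1/2}\, T(c)\bigr) \qquad (a,c \in \M).
\end{equation*}

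Next, since $h^{1/2}\M h^{1/2}$ is dense in $L^1(\M)$ (by the embedding result recalled in Section \ref{s2}) and $a\mapsto h^{1/2}ah^{1/2}$ is injective (as $h$ has dense range by faithfulness of $\nu$), the prescription $h^{1/2}ah^{1/2}\mapsto h^{1/2}S(a)h^{1/2}$ defines an unambiguous linear map on this dense subspace. Appealing to the duality $L^1(\M)^\ast \cong \M$ via $(x,c)\mapsto tr(xc)$ together with the identity above,
\begin{equation*}
\|h^{1/2}S(a)h^{1/2}\|_1 \;=\; \sup_{\|c\|_\infty\le 1}|tr(h^{1/2}S(a)h^{1/2}\, c)| \;=\; \sup_{\|c\|_\infty\le 1}|tr(h^{1/2}ah^{1/2}\, T(c))| \;\le\; \|h^{1/2}ah^{1/2}\|_1,
\end{equation*}
where the final inequality uses that $T$, being unital and positive on $\M$, has operator norm $1$ (Russo--Dye). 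Hence the prescription extends uniquely to a contraction $S^{(1)}$ on $L^1(\M)$.

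For the Banach dual, the identity of the first step gives $tr(S^{(1)}(x)\, c) = tr(x\, T(c))$ on the dense subspace, and this then persists for every $x\in L^1(\M)$ by continuity of $S^{(1)}$ and of the trace pairing. This is precisely the defining relation for the Banach adjoint of $S^{(1)}$ under the $L^1$--$L^\infty$ duality, so $(S^{(1)})^\ast = T$ once $L^\infty(\M)$ is identified with $\M$.

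The main obstacle lies in the first step: one must carefully juggle the two symmetric embeddings (at exponent $1/4$ for the $L^2$ inner product and at exponent $1/2$ for the $L^1$ pairing), apply the $\ast$-preservation of $T$ and $S$ at the right moment, and shuffle factors cyclically under $tr$. Once this identity is in place, the contractivity and the identification of the dual are routine duality arguments.
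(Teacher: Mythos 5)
Your argument is correct, and its core computation is the same as the paper's: you unpack the $L^2$-adjointness of $\mathcal{T}$ and $\mathcal{T}^*$ on elements of the form $h^{1/4}ah^{1/4}$, use Lemma \ref{LpMarkov} to replace $\mathcal{T}^*(h^{1/4}ah^{1/4})$ and $\mathcal{T}(h^{1/4}c^*h^{1/4})$ by $h^{1/4}S(a)h^{1/4}$ and $h^{1/4}T(c^*)h^{1/4}$, and rearrange by trace cyclicity to get $tr\bigl(h^{1/2}S(a)h^{1/2}\,c\bigr)=tr\bigl(h^{1/2}ah^{1/2}\,T(c)\bigr)$; the identification of the Banach dual by density of $h^{1/2}\M h^{1/2}$ in $L^1(\M)$ is then exactly the paper's closing step. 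Where you genuinely diverge is in how the contractive extension $S^{(1)}$ is obtained. The paper gets it by citing \cite[Theorem 5.1]{HJX}, which in context rests on the preceding lemma (Lemma \ref{intmar}, applied to $\mathcal{T}^*$) showing that $\nu\circ S\leq\nu$, so that the extension machinery of Haagerup--Junge--Xu applies. You instead read contractivity off the duality identity itself: since $L^1(\M)^*\cong\M$ under the trace pairing and $\|T(c)\|_\infty\leq\|c\|_\infty$ (Russo--Dye for the unital positive map $T$), you get $\|h^{1/2}S(a)h^{1/2}\|_1\leq\|h^{1/2}ah^{1/2}\|_1$ directly. This is more self-contained and elementary --- it bypasses both the HJX extension theorem and the $\nu\circ S\leq\nu$ lemma for this particular claim --- at the cost of not exhibiting $S^{(1)}$ as an instance of the general $L^p$-extension framework that the paper reuses elsewhere (e.g.\ for the full scale of $L^p$-extensions). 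Your attention to well-definedness via injectivity of $a\mapsto h^{1/2}ah^{1/2}$ and to the $*$-preservation of $T$ in the computation is appropriate and correct.
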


\begin{proof} The first claim is an easy consequence of \cite[Theorem 5.1]{HJX}. Now observe that for any $a,b\in \M$ we have 
that 
\begin{eqnarray*}
tr(S^{(1)}(h^{1/2}ah^{1/2})b)&=& tr((h^{1/2}S(a)h^{1/2})b)\\
&=& tr(h^{1/4}\mathcal{T}^*(h^{1/4}ah^{1/4}))h^{1/4}b)\\
&=& tr(\mathcal{T}^*(h^{1/4}a^*h^{1/4}))^*(h^{1/4}bh^{1/4}))\\
&=& \langle(h^{1/4}bh^{1/4}),\mathcal{T}^*(h^{1/4}a^*h^{1/4}))\rangle \\
&=& \langle(\mathcal{T}(h^{1/4}bh^{1/4}),(h^{1/4}a^*h^{1/4}))\rangle \\
&=& \langle(h^{1/4}T(b)h^{1/4}),(h^{1/4}a^*h^{1/4})\rangle \\
&=& tr((h^{1/4}ah^{1/4})(h^{1/4}T(b)h^{1/4})) \\
&=& tr((h^{1/2}ah^{1/2})T(b)).
\end{eqnarray*}
By continuity and the density of $h^{1/2}\M h^{1/2}$ in $L^1(\M)$, we have that  $tr(S^{(1)}(f)b)=tr(fT(b))$ 
for all $f\in L^1(\M)$.
\end{proof}

We are now finally ready to present the main result of this section, which establishes a connection between $L^2$ and weak* $L^\infty$ Markov semigroups. This result is in principle a KMS-assymetric version of \cite[Theorem 2.6]{GL1} for semigroups rather than for single operators.

\begin{theorem}
Let $(\mathcal{T}_t)$ be an $L^2$-Markov semigroup for which $(\mathcal{T}_t^*)$ is also an $L^2$-Markov semigroup, and let $(T_t)$ be the set of Markov operators on $\M$ corresponding to $(\mathcal{T}_t)$. Then each $T_t$ is normal and $(T_t)$ is a weak* semigroup. 
\end{theorem}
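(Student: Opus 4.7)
The plan is to leverage the preceding lemma, which realises each $T_t$ as the Banach dual of a contraction $S_t^{(1)}$ on $L^1(\M)$ built from $\mathcal{T}_t^*$. Normality of each $T_t$ is then automatic from the general fact that Banach-dual operators between dual spaces are weak*-continuous (using the identification $\M \cong L^1(\M)^*$ through the trace pairing). The remaining two claims---the semigroup identity for $(T_t)$ and weak*-continuity of the orbits $t\mapsto T_t(a)$---will both be reduced to corresponding $L^2$-level facts for $(\mathcal{T}_t)$ via the uniqueness clause in Lemma \ref{LpMarkov}.

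For the semigroup identity I would apply Lemma \ref{LpMarkov} to $\mathcal{T}_t\circ\mathcal{T}_s$ and to $\mathcal{T}_{t+s}$, observing that on the dense subspace $h^{1/4}\M h^{1/4}$ of $L^2(\M)$ both compositions produce the map $h^{1/4}ah^{1/4}\mapsto h^{1/4}T_t(T_s(a))h^{1/4}$, respectively $h^{1/4}ah^{1/4}\mapsto h^{1/4}T_{t+s}(a)h^{1/4}$; uniqueness of the induced Markov map on $\M$ then forces $T_t\circ T_s=T_{t+s}$.

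The principal technical step, and where I expect the main obstacle, is weak*-continuity in $t$. The natural approach is to first upgrade strong continuity of $(\mathcal{T}_t^*)$ on $L^2(\M)$ to strong continuity of $(S_t^{(1)})$ on $L^1(\M)$. On the dense subspace $h^{1/2}\M h^{1/2}\subset L^1(\M)$ one has $S_t^{(1)}(h^{1/2}ah^{1/2})-h^{1/2}ah^{1/2} = h^{1/4}\bigl[\mathcal{T}_t^*(h^{1/4}ah^{1/4})-h^{1/4}ah^{1/4}\bigr]h^{1/4}$, so a H\"older estimate of the form $\|h^{1/4}xh^{1/4}\|_1\leq \|x\|_2$ (using $\|h\|_1=1$) converts $L^2$-strong continuity of $(\mathcal{T}_t^*)$ into $L^1$-convergence on this dense set. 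Uniform contractivity of $(S_t^{(1)})$ then propagates this to all of $L^1(\M)$, and the semigroup identity for $(S_t^{(1)})$ (obtained exactly as for $(T_t)$ above) upgrades continuity at $0$ to continuity at every $t\geq 0$. Passing to Banach duals finally delivers the required weak*-continuity of $(T_t)$. The delicate point is the careful bookkeeping in bridging the $L^2$ and $L^1$ scales, since $S_t^{(1)}$ itself lives on $L^1$ but its link to $\mathcal{T}_t^*$ is expressed via $L^2$ formulae; it is this scale-shift that makes the estimate non-trivial rather than any isolated individual step.
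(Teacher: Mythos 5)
Your proposal is correct and follows essentially the same route as the paper: normality via the duality $T_t=(S_t^{(1)})^*$ from the preceding lemma, the semigroup law from uniqueness of the induced Markov map on $\M$, and weak*-continuity at $t=0$ by a density-plus-uniform-boundedness argument on $h^{1/2}\M h^{1/2}\subset L^1(\M)$. The only cosmetic difference is that you establish norm-continuity of the predual semigroup $(S_t^{(1)})$ and then dualize, whereas the paper tests $T_t(a)$ directly against $h^{1/2}bh^{1/2}$ using the weak $L^2$-continuity of $(\mathcal{T}_t)$; your H\"older bridge $\|h^{1/4}xh^{1/4}\|_1\leq\|h^{1/4}\|_4\,\|x\|_2\,\|h^{1/4}\|_4=\|x\|_2$ is valid (since $tr(h)=1$), so the scale-shift you flag as the main obstacle is in fact unproblematic.
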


\begin{proof} The normality of the $T_t$'s follow from the preceding lemma. The semigroup property is easy to check, and so all that remains is to show that $T_t(a)$ is weak*-convergent to $a$ as $t\to 0$. For any $a,b\in \M$ we have that 
\begin{eqnarray*}
\lim_{t\to 0}tr(T_t(a)(h^{1/2}bh^{1/2}))&=& \lim_{t\to 0}tr((h^{1/4}T_t(a)h^{1/4})(h^{1/4}bh^{1/4}))\\
&=& \lim_{t\to 0} tr(\mathcal{T}_t(h^{1/4}ah^{1/4}))(h^{1/4}bh^{1/4}))\\
&=& tr((h^{1/4}ah^{1/4}))(h^{1/4}bh^{1/4})) \\
&=& tr(a(h^{1/2}bh^{1/2})).
\end{eqnarray*}
We may now use that facts that $\|T_t\|\leq 1$ for each $t$, and that $h^{1/2}\M h^{1/2}$ is dense in $L^1(\M)$ to conclude that 
$\lim_{t\to 0}tr(T_t(a)f)=tr(af)$ for each $f\in L^1(\M)$. The result therefore follows.
\end{proof}

In closing we provide conditions under which the $L^\infty$-semigroups described above satisfy DBII.
 
\begin{theorem}\label{markovdb2}
Let $(\mathcal{T}_t)$ be an $L^2$-Markov semigroup for which $(\mathcal{T}_t^*)$ is also an $L^2$-Markov semigroup, and let $(T_t)$ and $(T_t^\flat)$ be the weak* Markov semigroups on $\M$ corresponding to $(\mathcal{T}_t)$ and $(\mathcal{T}_t^*)$ respectively. Then $(T_t)$ and $(T_t^\flat)$ are KMS-adjoints of each other. If $(\mathcal{T}_t)$ commutes with the modular operator $\Delta^{1/4}$, then $(T_t)$ and $(T_t^\flat)$ satisfy DB II with respect to each other.
\end{theorem}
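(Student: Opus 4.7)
The theorem splits into two independent claims: the KMS-adjointness of $(T_t)$ and $(T_t^\flat)$, which needs no modular hypothesis, and the DBII assertion under the assumption that $\mathcal{T}_t$ commutes with $\Delta^{1/4}$. My overall strategy for both is to lift the desired identities on $\M$ to $L^2(\M)$ using the intertwinings from Lemma \ref{LpMarkov}, playing them against the Hilbert-space adjoint relation $\langle \mathcal{T}_t x, y\rangle = \langle x, \mathcal{T}_t^* y\rangle$ to transfer the action between $\mathcal{T}_t$ and $\mathcal{T}_t^*$.

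For the KMS-adjoint claim, I would use the symmetric embedding $a\mapsto h^{1/4}ah^{1/4}$ to rewrite $tr(T_t(a)\,h^{1/2}bh^{1/2})$, via trace cyclicity, as $tr((h^{1/4}T_t(a)h^{1/4})(h^{1/4}bh^{1/4}))$. Substituting $h^{1/4}T_t(a)h^{1/4} = \mathcal{T}_t(h^{1/4}ah^{1/4})$ and reading the resulting trace as an $L^2$-inner product with $h^{1/4}b^*h^{1/4}$, one transfers across to $\mathcal{T}_t^*$, recognises $\mathcal{T}_t^*(h^{1/4}b^*h^{1/4}) = h^{1/4}T_t^\flat(b^*)h^{1/4}$, and uses reality preservation (automatic for positive maps) to replace $T_t^\flat(b^*)^*$ by $T_t^\flat(b)$. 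Undoing the cyclic manipulation then lands on $tr((h^{1/2}ah^{1/2})T_t^\flat(b))$, which is the KMS-pair identity.

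For DBII, the key is to unpack what commutation with $\Delta^{1/4}$ delivers. Taking Hilbert-space adjoints turns $\mathcal{T}_t\Delta^{1/4} = \Delta^{1/4}\mathcal{T}_t$ into the analogous identity for $\mathcal{T}_t^*$, and functional calculus on the positive self-adjoint $\Delta^{1/4}$ yields commutation with $\Delta^{1/2}$ as well. Using the standard identifications $\Delta^{1/4}(ah^{1/2}) = h^{1/4}ah^{1/4}$ and $\Delta^{1/2}(ah^{1/2}) = h^{1/2}a$, obtained from $\Delta^{it}(ah^{1/2}) = h^{it}ah^{-it}h^{1/2}$ by analytic continuation on a modular-analytic core, these commutations force $\mathcal{T}_t(ah^{1/2}) = T_t(a)h^{1/2}$ and $\mathcal{T}_t(h^{1/2}a) = h^{1/2}T_t(a)$, with analogous formulas for $\mathcal{T}_t^*$ and $T_t^\flat$. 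With these in hand, writing $\nu(T_t(a)b) = tr(h^{1/2}T_t(a)bh^{1/2})$ as the inner product $\langle h^{1/2}T_t(a), h^{1/2}b^*\rangle$, substituting $h^{1/2}T_t(a) = \mathcal{T}_t(h^{1/2}a)$, transferring to $\mathcal{T}_t^*(h^{1/2}b^*) = h^{1/2}T_t^\flat(b)^*$ via reality preservation, and unwinding recovers $\nu(aT_t^\flat(b))$.

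The main obstacle I expect is purely technical: because $\Delta^{1/4}$ is unbounded, the step from commutation with $\Delta^{1/4}$ to the embedding-interchange formulas cannot be read off blindly but must be carried out on an analytic core for the modular group, after which the boundedness of $\mathcal{T}_t$ and $\mathcal{T}_t^*$ together with the density of this core in $L^2(\M)$ propagate the identities to the subsets where they are needed. The remaining bookkeeping — cyclicity of $tr$, distributing adjoints across the $L^2$-inner product, and invoking reality preservation for $T_t$ and $T_t^\flat$ — is routine within the Haagerup framework recalled in Section \ref{s2}.
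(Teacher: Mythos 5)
Your proposal is correct and follows essentially the same route as the paper: lift via Lemma \ref{LpMarkov}, use commutation with the modular operator to trade the symmetric embedding $h^{1/4}ah^{1/4}$ for a GNS-type embedding, and then play the duality $\langle \mathcal{T}_t x,y\rangle=\langle x,\mathcal{T}_t^* y\rangle$ against reality preservation of the positive maps $T_t$ and $T_t^\flat$. The only cosmetic difference is that in the DBII part the paper applies $\Delta^{-1/4}$ to get $\mathcal{T}_t(ah^{1/2})=T_t(a)h^{1/2}$ and works with the left embedding, whereas you additionally pass to commutation with $\Delta^{1/2}$ and use $\mathcal{T}_t(h^{1/2}a)=h^{1/2}T_t(a)$; both variants need the same care with the unbounded modular operator, which you correctly flag.
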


\begin{proof} The proofs of the two claims are very similar, and hence we will only prove the second claim. If $[\mathcal{T}_t,\Delta^{1/4}]=0$, then also $[\mathcal{T}_t^*,\Delta^{1/4}]= -[\mathcal{T}_t,\Delta^{1/4}]^* 
=0$. But in that case we have that $T_t(a)h^{1/2}= \Delta^{-1/4}(h^{1/4}T_t(a)h^{1/4}) = 
\Delta^{-1/4}\mathcal{T}_t(h^{1/4}ah^{1/4}) = \mathcal{T}_t(\Delta^{-1/4}(h^{1/4}ah^{1/4})) = \mathcal{T}_t(ah^{1/2})$ for any 
$a\in \M$ and any $t$. We similarly have that $T_t^\flat(a)h^{1/2}=  \mathcal{T}_t^*(ah^{1/2})$ for any $a\in \M$ and any $t$. 
But in that case $\nu(bT_t(a))= tr(h^{1/2}bT_t(a)h^{1/2}) = tr(h^{1/2}b\mathcal{T}_t(ah^{1/2})) = 
\langle\mathcal{T}_t(ah^{1/2}), b^*h^{1/2}\rangle =\langle ah^{1/2},\mathcal{T}_t^*(b^*h^{1/2})\rangle = 
\langle ah^{1/2},T_t^\flat(b^*)h^{1/2})\rangle = \nu(T_t^\flat(b)a)$.
\end{proof}

\section{Potential dynamics - positive maps as generators of strongly positive semigroups}\label{PSP}

The classical Fokker-Planck equation can be written in form comprising a ``Potential'' and ``generalised Laplacian'' term (see equation (2.4) in \cite{AMTU}). We shall propose quantum counterparts of each of these terms, starting with the potential term in this section. In our investigation we will restrict ourselves to von Neumann algebras $\M$ equipped with a faithful normal state $\nu$. Throughout this section we will assume that $T$ is a unital Schwarz map. For $V=T-\mathrm{id}$ we will for any $a\in \M$ have that 
\begin{eqnarray*}
V(a^*a)&=&T(a^*a)-a^*a\\
&\geq& T(a)^*T(a)-a^*a\\
&\geq& T(a)^*T(a)-a^*a-|T(a)-a|^2\\
&=&T(a)^*a+a^*T(a)-2a^*a\\
&=&V(a)^*a+a^*V(a).
\end{eqnarray*} 
This inequality leads to the following conclusion:

\begin{proposition} For any unital Schwarz map, $V=T-\mathrm{id}$ generates a uniformly continuous semigroup $(P_t)$ of unital strongly positive maps on $\M$.
\end{proposition}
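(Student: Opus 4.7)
The plan is to exploit the inequality $V(a^*a)\geq V(a)^*a+a^*V(a)$ (which is essentially Lindblad dissipativity of $V$) that the authors have already derived, together with the fact that a unital Schwarz map is in particular a positive contraction, so $V$ is bounded and hence generates a uniformly continuous semigroup $P_t=e^{tV}$ on $\M$.

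First I would address unitality and positivity. Since $T$ is unital, $V(\I)=T(\I)-\I=0$, and therefore $P_t(\I)=\I$ for all $t\geq 0$. For positivity I would rewrite $P_t=e^{tV}=e^{-t}e^{tT}=e^{-t}\sum_{n=0}^\infty \frac{t^n}{n!}T^n$. The Schwarz inequality $T(b^*b)\geq T(b)^*T(b)\geq 0$ implies that $T$ is a positive map, hence so is $T^n$ for every $n$, and thus $P_t$ is a norm-convergent sum of positive maps, hence positive.

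The core of the argument is strong positivity, i.e.\ $P_t(a^*a)\geq P_t(a)^*P_t(a)$ for every $a\in\M$. For fixed $a\in\M$ and $t\geq 0$, I would introduce the norm-differentiable function $g\colon[0,t]\to\M$ defined by $g(s)=P_{t-s}(P_s(a)^*P_s(a))$, noting $g(0)=P_t(a^*a)$ and $g(t)=P_t(a)^*P_t(a)$. Using the product rule together with $\frac{d}{ds}P_{t-s}=-VP_{t-s}=-P_{t-s}V$ (which is valid because $V$ is bounded and commutes with its semigroup), and writing $b_s:=P_s(a)$ so $b_s'=V(b_s)$, one gets
\begin{equation*}
g'(s)=P_{t-s}\bigl(-V(b_s^*b_s)+V(b_s)^*b_s+b_s^*V(b_s)\bigr).
\end{equation*}
The dissipativity inequality established in the paper, applied with $b_s\in\M$, ensures that the argument in brackets is $\leq 0$. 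Since $P_{t-s}$ is positive by the previous paragraph, $g'(s)\leq 0$, so $g$ is decreasing, and hence $P_t(a)^*P_t(a)=g(t)\leq g(0)=P_t(a^*a)$.

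The only delicate point I anticipate is the order of deduction: one must first secure positivity of $P_{t-s}$ (via the power-series representation) before invoking it to conclude $g'(s)\leq 0$; once the steps are arranged in that order the rest is routine. A minor technicality is verifying that $V$ and $P_{t-s}$ commute and that the product-rule computation is valid in norm, but both follow from uniform continuity of the semigroup generated by the bounded operator $V$.
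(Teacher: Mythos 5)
Your argument is correct, but it takes a different route from the paper: the paper's proof consists of deriving the dissipativity inequality $V(a^*a)\geq V(a)^*a+a^*V(a)$ (done just before the proposition) and then simply invoking the Evans--Hanche-Olsen theorem (Notes and Remarks on \S 5.3.1 of Bratteli--Robinson, vol.\ 2), with unitality following from $V(\I)=0$. You instead reprove the relevant special case from scratch: positivity of each $P_t$ via the factorisation $e^{tV}=e^{-t}e^{tT}$ and the norm-convergent power series in the positive map $T$, and then the Schwarz property via the interpolation function $g(s)=P_{t-s}\bigl(P_s(a)^*P_s(a)\bigr)$, whose derivative is $\leq 0$ precisely by the dissipativity inequality applied to $b_s=P_s(a)$ together with positivity of $P_{t-s}$. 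This is in essence the standard proof of the bounded-generator case of the cited theorem, and your ordering of steps (positivity first, then the monotonicity argument) correctly avoids circularity; the routine points you flag (the product rule for $s\mapsto P_{t-s}(x(s))$ in norm, commutation of $V$ with the semigroup, and passing from $g'(s)\leq 0$ to $g(t)\leq g(0)$ by integrating a norm-continuous function with values in the closed positive cone) all go through because $V$ is bounded. What the citation buys the paper is brevity and a statement valid for arbitrary bounded dissipations; what your argument buys is a self-contained elementary proof that exploits the special form $V=T-\mathrm{id}$, where positivity of the exponential is immediate from the series, something not available for a general bounded generator.
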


This follows from the Evans - Hanche-Olsen result (see the Notes and Remarks regarding \S 5.3.1 in \cite{BR2}). The unitality of the semigroup $(P_t)$ is a consequence of the fact that $V(\I)=0$. Now suppose that in addition $\nu\circ T\leq \nu$. Since the semigroup is uniformly continuous, each $P_t$ is of the form $P_t=e^{tV}$. But then $\nu\circ P_t=e^{-t}\nu\circ e^{tT}=e^{-t}\sum_{k=0}\frac{1}{k!}t^k\nu\circ T^k\leq e^{-t}\sum_{k=0}\frac{1}{k!}t^k\nu=\nu$. So by \cite[Theorem 5.1]{HJX}, the action of the semigroup will in this case extend to each $L^p(\M)$ by means of the prescription $P_t^{(p)}(h^{1/2p}ah^{1/2p})=h^{1/2p}P_t(a)h^{1/2p}$ ($a\in \M$). As far as $L^2(\M)$ is concerned, we then also have that $\|P_t(a)h^{1/2}\|_2^2=tr(h^{1/2}P_t(a)^*P_t(a)h^{1/2})\leq tr(h^{1/2}P_t(a^*a)h^{1/2})=\|P_t^{(1)}(h^{1/2}a^*ah^{1/2})\|_1\leq \|h^{1/2}a^*ah^{1/2}\|_1=\|ah^{1/2}\|_2^2$ for all $a\in \M$. So $(P_t)$ extends to a uniformly continuous semigroup $(\widetilde{P}_t)$ on $L^2(\M)$ for which the prescription $t\to h^{1/2}\widetilde{P}_t$ yields a densely defined semigroup on $L^1(\M)$ which extends by continuity to $(P_t^{(1)})$. We remind the reader that the embedding $\M\to L^2(\M):a\mapsto ah^{1/2}$ corresponds to the construction of $H_\nu$ by means of the GNS-construction. So up to this equivalence the semigroup $(\widetilde{P}_t)$ is just a semigroup on $H_\nu$ constructed by means of the prescription $\widetilde{P}_t(\eta(a))=\eta(P_t(a))$ for all $a\in \M$. The one problem we face here is that since the semigroup $(\widetilde{P}_t)$ is not defined by means of a symmetric embedding, it is not a priori clear that this semigroup is $L^2$-Markov. However with some additional assumptions this problem can be circumvented.

The following theorem shows that under mild assumptions, the semigroup induced by $V$ is of exactly the type we need. Operators of this form are therefore a rich source of ``quantum potential dynamics''.
 
\begin{theorem}\label{Vcommdelta} Let $T$ be a unital positive map on $\M$ which satisfies DBII with respect to $\nu$, and let $V=T-\mathrm{id}$ be as before. Then the prescription $P_t^{(1)}(h^{1/2}ah^{1/2})=h^{1/2}P_t(a)h^{1/2}$ ($a\in \M$) extends continuously to an $L^1$-Markov semigroup on $L^1(\M)$, whilst the prescription $\widetilde{P}_t(ah^{1/2})= P_t(a)h^{1/2}$ ($a\in \M$) extends continuously to an $L^2$-Markov semigroup on $L^2(\M)$ for which $(P_t^{(1)})$ appears as the continuous extension of the partially defined semigroup $(h^{1/2}\widetilde{P}_t)$ on $L^1(\M)$. The operator $V$ yields a bounded operator $\widetilde{V}$ on $L^2(\M)\equiv H_\nu$ by means of the prescription $\widetilde{V}(\eta(a))=\eta(V(a))$ ($a\in \M$) which turns out to be the infinitesimal generator of $(\widetilde{P}_t)$. In addition $\widetilde{V}$ commutes with $\Delta^{1/4}$, and each $\widetilde{P}_t$ is then also induced by the prescription $h^{1/4}ah^{1/4}\to h^{1/4}P_t(a)h^{1/4}$ ($a\in \M$).
\end{theorem}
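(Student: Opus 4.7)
The plan is to work outwards from the $\M$-level semigroup $(P_t)=e^{tV}$ furnished by the preceding proposition, pushing it to $L^1(\M)$ and then $L^2(\M)$ and closing with the commutation $[\widetilde V,\Delta^{1/4}]=0$. The opening move is to extract $\nu\circ T\le\nu$ from DBII: setting $a=\I$ gives $\nu\circ T^\flat=\nu$, then combining with $b=\I$ together with cyclicity of the Haagerup trace and the bound $tr(XY)\le\|Y\|tr(X)$ applied to $X=h^{1/2}ah^{1/2}$ and $Y=T^\flat(\I)$ yields $\nu(T(a))\le\nu(a)$ for $a\ge 0$. Iterating through the series $P_t=\sum t^k T^k/k!$ passes the estimate to $\nu\circ P_t\le\nu$ for every $t\ge 0$.

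With this estimate in hand, \cite[Theorem~5.1]{HJX} delivers the contractive $L^1$-semigroup $P_t^{(1)}$ via the prescription $h^{1/2}ah^{1/2}\mapsto h^{1/2}P_t(a)h^{1/2}$ extended by continuity from the dense subspace $h^{1/2}\M h^{1/2}$, and the $L^1$-Markov property drops out of Lemma~\ref{RNLp}: any $0\le x\le h$ has the form $h^{1/2}ah^{1/2}$ with $0\le a\le\I$, and unital positivity of $P_t$ forces $0\le P_t^{(1)}(x)\le h$. For the $L^2$-extension via GNS, I would set $\widetilde V(ah^{1/2}):=V(a)h^{1/2}$ and bound it using Schwarz together with $\nu\circ T\le\nu$: $\|T(a)h^{1/2}\|_2^2=\nu(T(a)^*T(a))\le\nu(T(a^*a))\le\nu(a^*a)=\|ah^{1/2}\|_2^2$. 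The uniformly continuous semigroup $\widetilde P_t=e^{t\widetilde V}$ then satisfies $\widetilde P_t(ah^{1/2})=P_t(a)h^{1/2}$ via the series identity $\widetilde V^k(ah^{1/2})=V^k(a)h^{1/2}$, and the compatibility $h^{1/2}\widetilde P_t(ah^{1/2})=P_t^{(1)}(h^{1/2}ah^{1/2})$ on the dense subspace $\M h^{1/2}\subset L^2$ is tautological.

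The hard part is the commutation $[\widetilde V,\Delta^{1/4}]=0$, because it is precisely through this that the $L^2$-Markov property and the alternative symmetric prescription become transparent: by Lemma~\ref{RNLp} and Remark~\ref{natcone}, $0\le\xi\le h^{1/2}$ forces $\xi=h^{1/4}ah^{1/4}$ with $0\le a\le\I$, so $\widetilde P_t$ being $L^2$-Markov reduces to $\widetilde P_t(h^{1/4}ah^{1/4})=h^{1/4}P_t(a)h^{1/4}$, equivalently commutation with $\Delta^{1/4}$ on the core $\M h^{1/2}$. My strategy is to invoke \cite[Theorem~5.1]{HJX} a second time at $p=2$ to produce a bounded symmetric extension $T^{(2)}$ of $T$ via $h^{1/4}ah^{1/4}\mapsto h^{1/4}T(a)h^{1/4}$, then identify $\widetilde T=T^{(2)}$ as bounded operators on $L^2(\M)$ by matching Hilbert-space adjoints: the GNS identity $\langle ah^{1/2},bh^{1/2}\rangle=\nu(b^*a)$ combined with DBII gives $(\widetilde T)^*=\widetilde{T^\flat}$, while cyclicity of the Haagerup trace with DBII gives $(T^{(2)})^*=(T^\flat)^{(2)}$. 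Together these adjoint identities, plus the density of $\M h^{1/2}$ and $h^{1/4}\M h^{1/4}$ in $L^2(\M)$, should be sufficient to force $\widetilde T=T^{(2)}$. Once this identification is secured, evaluating at $h^{1/4}ah^{1/4}=\Delta^{1/4}(ah^{1/2})$ delivers $\widetilde T\Delta^{1/4}(ah^{1/2})=h^{1/4}T(a)h^{1/4}=\Delta^{1/4}\widetilde T(ah^{1/2})$ on $\M h^{1/2}$, whence $[\widetilde V,\Delta^{1/4}]=0$, and the alternative prescription for $\widetilde P_t$ comes from the series expansion of $e^{t\widetilde V}$.
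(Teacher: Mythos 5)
Your first two paragraphs track the paper's own argument closely: the paper deduces $\nu\circ T=\nu$ (not merely $\leq\nu$) by using that $T^\flat$ is unital — an assumption it makes tacitly, and which your bound via $\|T^\flat(\I)\|$ also needs, since Definition \ref{DBIIdef} as stated only asks $T^\flat$ to be positive. The genuine gap is in your third paragraph, at the claimed identity $(T^{(2)})^*=(T^\flat)^{(2)}$. Unwinding the symmetric embedding with trace cyclicity gives $\langle T^{(2)}(h^{1/4}ah^{1/4}),h^{1/4}bh^{1/4}\rangle=tr(T(a)(h^{1/2}b^*h^{1/2}))$, so $(T^{(2)})^*$ encodes the \emph{KMS}-adjoint of $T$, whereas DBII is the statement $tr(T(a)(bh))=tr(a(T^\flat(b)h))$; the elements $h^{1/2}b^*h^{1/2}$ and $b^*h$ differ unless $b$ is fixed by the modular group, so DBII does not yield $(T^{(2)})^*=(T^\flat)^{(2)}$. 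The KMS adjoint and the DBII adjoint coincide precisely when $T$ commutes with $\sigma_t^\nu$ — which is essentially what you are trying to prove — so the route is circular. The remark following the CP-map theorem in Section \ref{PSP}, citing \cite[Lemma 2]{BR3}, is exactly a warning that the extensions through the two embeddings differ in the absence of this commutation. Moreover, even granting both adjoint identities, they only say that $\widetilde{T}^*$ and $(T^{(2)})^*$ extend $T^\flat$ through two \emph{different} embeddings; concluding $\widetilde{T}=T^{(2)}$ from this would require $\widetilde{T^\flat}=(T^\flat)^{(2)}$, which is the same unproved statement for $T^\flat$.

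The missing ingredient is a genuine theorem, not a formal adjoint computation: the paper first shows that the pair $(P_t,P_t^\flat)$ are in DBII with respect to each other (by transporting $\nu(aV(b))=\nu(V^\flat(a)b)$ through the exponential series) and then invokes \cite[Corollary 4]{MS}, the Majewski--Streater result that a positive map satisfying DBII induces an operator on $H_\nu$ commuting with the modular operator. Once $[\widetilde{P}_t,\Delta^{1/4}]=0$ is available, your reduction of the $L^2$-Markov property to the symmetric prescription via Lemma \ref{RNLp} and Remark \ref{natcone} is sound (the paper instead routes through \cite[Proposition 2.5.26]{BR} to see that $\widetilde{P}_t$ preserves the natural cone), and the remainder of your outline — the identification of the generator and the identity $\widetilde{P}_t(h^{1/4}ah^{1/4})=h^{1/4}P_t(a)h^{1/4}$ — then goes through as in the paper.
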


\begin{proof} If $T$ satisfies DBII, then $\nu\circ T=\nu$, and hence most of the claims follow from the preceding discussion. 
What remains to be done is to show that $(\widetilde{P}_t)$ is $L^2$-Markov with bounded generator $\widetilde{V}$, that 
$\widetilde{V}$ commutes with $\Delta^{1/2}$, and that each $\widetilde{P}_t$ is an extension of $h^{1/4}ah^{1/4}\to h^{1/4}P_t(a)h^{1/4}$ ($a\in \M$). As far as the boundedness of 
$\widetilde{V}$ is concerned, notice that for $T$ we have that $\|\widetilde{T}(\eta(a))\|_2^2=\|\eta(T(a))\|_2^2= \nu(T(a)^*T(a))\leq \nu(T(a^*a))=\nu(a^*a)=\|\eta(a)\|_2^2$ for all $a\in \M$. Let $T^\flat$ be the unital positive map for 
which we have that $\nu(aT(b))=\nu(T^\flat(a)b)$ for all $a,b\in \M$, and write $V^\flat$ for $T^\flat-\mathrm{id}$ and
$P_t^\flat$ for $e^{tV^\flat}$. The same analysis as was applied to $V$, also shows that $(P_t^\flat)$ is a strongly positive 
unital semigroup. For any $a,b\in \M$ we will then have that $\nu(aV(b))=\nu(a(T(b)-b))=\nu((T^\flat(a)-a)b)= \nu(V^\flat(a)b)$, and hence that $\nu(aP_t(b))=\sum_{k=0}^\infty\frac{1}{k!}\nu(aV^k(b))= \sum_{k=0}^\infty\frac{1}{k!}\nu((V^\flat)^k(a)b)= \nu(P_t^\flat(a)b)$. Thus the pair $(P_t,P_t^\flat)$ satisfy DBII with 
respect to each other. This in particular ensures that for each $t$, the induced operator $\widetilde{P}_t$ on $H_\nu$, 
commutes with the modular operator \cite[Corollary 4]{MS}. We already know that the positivity of $P_t$ ensures that each 
$\widetilde{P}_t$ maps $\eta(\M_+)$ back into itself. When these two facts are combined, the continuity of $\widetilde{P}_t$ 
ensures that we may conclude from \cite[Proposition 2.5.26]{BR} that each $\widetilde{P}_t$ preserves that natural cone. That 
is each $\widetilde{P}_t$ preserves the order structure on $H_\nu$. Since also each $\widetilde{P}_t$ preserves 
$\eta(\I)$, this semigroup is $L^2$-Markov. On passing to the Haagerup-Terp context the commutation of the $\widetilde{P}_t$'s with $\Delta^{1/4}$ also ensure that $h^{1/4}P_t(a)h^{1/4}=\Delta^{1/4}(P_t(a)h^{1/2})= \Delta^{1/4}\widetilde{P}_t(ah^{1/2})= \widetilde{P}_t(\Delta^{1/4}(ah^{1/2}))= \widetilde{P}_t(h^{1/4}ah^{1/4})$ for each $t$.

To see that $\widetilde{V}$ is the generator of $(\widetilde{P}_t)$, it is enough to note that we will for any $a\in \M$ have that 
$$\lim_{t\searrow 0}\frac{1}{t}(\widetilde{P}_t(\eta(a))-\eta(a))= \lim_{t\searrow 0}\frac{1}{t}\eta(\widetilde{P}_t(a)-a)  =\eta(V(a))=\widetilde{V}(\eta(a)).$$The fact that $\widetilde{V}$ commutes with $\Delta^{1/4}$ easily follows from the known theory of positive maps satisfying DBII \cite{MS}.
\end{proof} 

There is one further refinement that can be made if additionally we assume the map $T$ to be CP. We pause to note this fact.

\begin{theorem} Let $T$ be a CP map for which we have that $\nu\circ T\leq \nu$ and also that $\sigma_t^\nu\circ T= T\circ\sigma_t^\nu$ for all $t$. For any $0\leq c\leq 1$ each of the prescriptions $\widetilde{T}_c(h^{c/2}ah^{(1-c)/2})= h^{c/2}T(a)h^{(1-c)/2}$ ($a\in \M$) will then yield the same contractive map on $L^2(\M)$.
\end{theorem}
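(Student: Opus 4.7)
The plan is to reduce all cases to the symmetric embedding $c=1/2$ and use the modular commutation to identify $\widetilde{T}_c$ with $\widetilde{T}_{1/2}$. First I would invoke \cite[Theorem 5.1]{HJX} together with the hypothesis $\nu\circ T\leq \nu$ to obtain that the prescription $\widetilde{T}_{1/2}(h^{1/4}ah^{1/4})=h^{1/4}T(a)h^{1/4}$ yields a well-defined contraction on $L^2(\M)$. This disposes of the case $c=1/2$ and gives us a candidate for the common value of all the $\widetilde{T}_c$.

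Second, I would pass to the $*$-subalgebra $\M_0\subset\M$ of entire analytic elements for the modular group $\sigma_t^\nu$. The assumption $\sigma_t^\nu\circ T=T\circ\sigma_t^\nu$ extends by analytic continuation to the identity $\sigma_z^\nu(T(a))=T(\sigma_z^\nu(a))$ for every $a\in\M_0$ and every $z\in\mathbb{C}$. Applying this with $z=-i(2c-1)/4$, and using that on analytic elements $\sigma_{-i(2c-1)/4}^\nu(a)=h^{(2c-1)/4}ah^{(1-2c)/4}$, one obtains for each $a\in\M_0$ the identity
\begin{equation*}
h^{c/2}ah^{(1-c)/2}=h^{1/4}\sigma_{-i(2c-1)/4}^\nu(a)h^{1/4},
\end{equation*}
and similarly with $T(a)$ in place of $a$. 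Feeding the right-hand side into $\widetilde{T}_{1/2}$ and then commuting $\sigma_{-i(2c-1)/4}^\nu$ past $T$ produces exactly $h^{c/2}T(a)h^{(1-c)/2}$. Thus the already-constructed contraction $\widetilde{T}_{1/2}$ implements the prescription defining $\widetilde{T}_c$ on the subset $\{h^{c/2}ah^{(1-c)/2}: a\in\M_0\}$.

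To finish, I would verify that this subset is dense in $L^2(\M)$, so that $\widetilde{T}_{1/2}$ is forced to be the unique contractive extension of the densely defined prescription in the theorem statement, proving both its well-definedness and the equality $\widetilde{T}_c=\widetilde{T}_{1/2}$. Density should follow by approximating any $a\in\M$ in the appropriate topology by the mollified elements $a_n=\sqrt{n/\pi}\int e^{-nt^2}\sigma_t^\nu(a)\,dt\in\M_0$ and showing that $h^{c/2}a_nh^{(1-c)/2}\to h^{c/2}ah^{(1-c)/2}$ in $L^2(\M)$, combined with the known density of $h^{c/2}\M h^{(1-c)/2}$ in $L^2(\M)$.

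The main obstacle is the last density step: one needs to control $\|h^{c/2}(a-a_n)h^{(1-c)/2}\|_2$ uniformly in the asymmetry parameter $c$. Because the two-sided multiplication by fractional powers of $h$ is an unbounded operation in general, some care is required; the cleanest route is to note that by the Kosaki interpolation theorem the map $a\mapsto h^{c/2}ah^{(1-c)/2}$ extends to a contractive (in fact isometric, on the positive cone) identification between the appropriate interpolation space and $L^2(\M)$, which reduces the approximation to a weak-$*$ convergence statement in $\M$ that the modular mollifiers handle automatically.
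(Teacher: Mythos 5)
Your argument is essentially correct, but it takes a genuinely different route from the paper. The paper disposes of this theorem by citing Theorem 4.11 and Corollary 4.14 of the Labuschagne--Majewski paper \cite{LM}: the CP map $T$ (with $\nu\circ T\leq\nu$ and modular covariance) extends to a single bounded map $\mathscr{T}$ on $(L^1+L^\infty)(\M\rtimes_\nu\mathbb{R},\tau)$, which restricts to $L^2(\M)$ and is then checked to implement every prescription $\widetilde{T}_c$ simultaneously; all the analytic work is outsourced to \cite{LM}. You instead build only the symmetric extension $\widetilde{T}_{1/2}$ from \cite[Theorem 5.1]{HJX} and transport it to the other embeddings by analytically continuing the commutation $\sigma_t^\nu\circ T=T\circ\sigma_t^\nu$ to imaginary time on the algebra $\M_0$ of entire analytic elements, using $\sigma^\nu_{-i(2c-1)/4}(a)=h^{(2c-1)/4}ah^{(1-2c)/4}$ so that $h^{c/2}ah^{(1-c)/2}=h^{1/4}\sigma^\nu_{-i(2c-1)/4}(a)h^{1/4}$ (your exponent bookkeeping checks out at $c=0,\tfrac12,1$). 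This is a self-contained and arguably more illuminating proof, since it makes visible exactly where the modular covariance enters; what the paper's route buys is that \cite{LM} has already dealt with the measurable-operator technicalities once and for all. Two small points to tighten. First, to pass from $a\in\M_0$ to general $a\in\M$ you need not only $h^{c/2}a_nh^{(1-c)/2}\to h^{c/2}ah^{(1-c)/2}$ but also $h^{c/2}T(a_n)h^{(1-c)/2}\to h^{c/2}T(a)h^{(1-c)/2}$ in $L^2$; this follows because $T$ commutes with the mollification (so $T(a_n)=(T(a))_n$, using that $\nu\circ T\leq\nu$ with $\nu$ faithful normal forces $T$ to be normal, allowing $T$ to be pulled through the Bochner integral), but it should be said, since otherwise you only identify $\widetilde T_c$ with $\widetilde T_{1/2}$ on the subdomain $h^{c/2}\M_0h^{(1-c)/2}$. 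Second, the appeal to Kosaki interpolation at the end is unnecessary: for each fixed $c$ one has $h^{c/2}\sigma_t^\nu(a)h^{(1-c)/2}=h^{it}\bigl(h^{c/2}ah^{(1-c)/2}\bigr)h^{-it}$, and $x\mapsto h^{it}xh^{-it}$ is a strongly continuous unitary group on $L^2(\M)$, so the Gaussian mollifiers converge in $L^2$-norm directly; no uniformity in $c$ is needed.
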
 

\begin{proof} By Theorem 4.11 of \cite{LM}, $T$ extends to a bounded map $\mathscr{T}$ on $(L^1+L^\infty)(\M\rtimes_\nu \mathbb{R},\tau)$. The same theorem shows that this space includes $L^2(\M)$, and that $\mathscr{T}$ restricts to a bounded 
map on $L^2(\M)$. The proof of \cite[Corollary 4.14]{LM} can now be modified to show that for any $0\leq c\leq 1$ and any 
$a\in \M$, we have that $\mathscr{T}(h^{c/2}ah^{(1-c)/2})= h^{c/2}T(a)h^{(1-c)/2}=\widetilde{T}_c(h^{c/2}ah^{(1-c)/2})$. This 
then proves the claim.
\end{proof}

\begin{remark} For our purposes we of course only need the above result to hold for the cases $c=0, \frac{1}{2}$. We pause to 
note that even then, the above result may not hold if the map $T$ does not commute with the modular automorphism group. To 
see this let $T$ be a positive map and define the maps $T_{(c)}$ by the prescription $T_{(c)}:h^{c/2} a h^{(1-c)/2} \to h^{c/2} T(a) h^{(1-c)/2}$ for all $a\in \M$. Suppose now that for both of the cases $c=0, \frac{1}{2}$ the map $T_{(c)}$ 
extends to the same bounded map $\widetilde{T}$ on $L^2(\M)$. Then $T_{(0)} V_0 \subset \overline{V_0}$ and 
$T_{(1/2)} V_{1/2} \subset \overline{V_{1/2}}$, where $V_{\alpha/2} = h^{\alpha/2}\M_+ h^{(1-\alpha)/2}$. Here 
$\overline{V_{1/4}}$ is of course the natural cone (see Remark \ref{natcone}). It then follows from \cite[Lemma 2]{BR3} that 
$\widetilde{T}$ must then commute with the modular operator if $\widetilde{T}^*$ also satisfies this property. In fact the 
remark immediately following \cite[Lemma 2]{BR3}, clearly shows why in general bounded extensions of $T_0$ and $T_{1/2}$ to 
$L^2(\M)$ will be different if this commutation criterion is not satisfied.
\end{remark}

\section{Laplacian dynamics}\label{Lapdyn}

We now pass to an identification of what may be termed a generalised quantum Laplacian. Let $\mathcal{F}=\{\delta_k\colon 1\leq k\leq n\}$ be a set of $n$ weak* closed derivations. We shall further assume that each derivation is unital in the sense that $\I\in \mathrm{dom}(\delta_i)$ for each $1\leq i\leq n$. In the case where the derivations are norm closed and norm densely defined this requirement is in fact automatically satisfied! (See Step 2 on page 22 of \cite{Bra}.) Hence it is an entirely natural assumption to make. For such sets of derivations we may follow Bratteli \cite{Bra} and introduce the sets of elements $\M^{(m)}(\mathcal{F})$ which are ``$m$-differentiable'' with respect to $\mathcal{F}$. These spaces are defined to be $$\M^{(m)}(\mathcal{F})=\{a\in\M\colon a\in \mathrm{dom}(\delta_{i_1}\ldots\delta_{i_m})\mbox{ for all }\delta_{i_1},\dots,\delta_{i_m}\in \mathcal{F}\}$$where $\M^{(0)}(\mathcal{F})$ is identified with $\M$. All in fact turn out to be Banach *-algebras when equipped with the norm 
$$\Vert a \Vert_{(n)} = \Vert a \Vert + \sum^m_{k=1} \frac{1}{k!}
\sum^{n}_{i_1=1} \ldots \sum^{n}_{i_k=1} \Vert\delta_{i_1} \delta_{i_2} \ldots \delta_{i_k}(a) \Vert.$$See for example \cite[2.2.4]{Bra} for a proof of these facts. We take note of the fact that the book \cite{Bra} focuses on $C^*$-algebras and norm-closed densely defined derivations. However when passing to the von Neumann algebra setting, norm-closedness must be replaced with weak* closedness, and densely defined with weak* densely defined. All relevant arguments from the $C^*$ context will then readily carry over to the von Neumann context. 

We shall further restrict ourselves to the setting where ${\M}^{(2)}(\mathcal{F})$ is weak* dense. Note that requiring each $\delta_i$ to be unital, ensures that ${\M}^{(2)}(\mathcal{F})$ is unital.

We next need find a means of extending these derivations to $H_\nu$. Let $\delta$ be a weak* densely defined weak*-closed *-derivation on $\M$. We then simply define an action of $\delta$ on the GNS Hilbert space $H_\nu$ by setting $\widetilde{\delta}(\eta(x))=\eta(\delta(x))$ for ``appropriate'' elements of $\M$, and $\eta$ the canonical map from $\M$ to $H_\nu$. 

The analysis of Cipriani and Sauvageot \cite[page 79]{CS} suggest that for a closed densely defined derivation $\delta$, one should regard $\delta^*\delta$ as the divergence of that derivation. Lending further credence to this idea is the observation that for smooth functions of compact support on the real line, integration by parts shows that 
$$\langle f', g\rangle =\int_{-\infty}^\infty f'\overline{g} = - \int_{-\infty}^\infty f\overline{g'} = -\langle f, g'\rangle.$$ So at least for these functions it seems that for the differential operator $D_x:f\to f'$, one has that $-D_x^2 f  = D_x^*D_x f$. Now let $D=[d_{ij}]$ be a positive definite numerical matrix. 

In the setting of ${\M}^{(2)}(\mathcal{F})$, the action of the extensions $\widetilde{\delta}_i$ are defined on $\eta({\M}^{(2)})$. The weak* density of ${\M}^{(2)}(\mathcal{F})$, then ensures that the common domain of these extensions - namely $\eta({\M}^{(2)})$ - is dense in $H_\nu$. So the expression $-\sum_{1\leq i,j\leq n} d_{ij}\widetilde{\delta}_i^*\widetilde{\delta}_j$ 
would then be a natural first guess for a quantum analogue of the generalised Laplacian $\mathrm{div}(D\nabla)$. However we want our Laplacian to be closed, and hence further analysis is required. We pause to note that the map $\sum_{1\leq i,j\leq n} d_{ij}\widetilde{\delta}_i^*\widetilde{\delta}_j$ is positive definite definite on its domain in the sense that $\langle\sum_{1\leq i,j\leq n} d_{ij}\widetilde{\delta}_i^*\widetilde{\delta}_j(\xi),\xi\rangle\geq 0$ for all $\xi$ in its domain. This can be seen as follows. Firstly regard the direct sum $\oplus_{i=1}^nH_\nu$ as a column space. The operator $$[\widetilde{\delta}_1^*\, \widetilde{\delta}_2^*\,\dots\, \widetilde{\delta}_n^*]^T
[d_{ij}][\widetilde{\delta}_1\, \widetilde{\delta}_2\dots\, \widetilde{\delta}_n]$$is then clearly a possibly unbounded positive operator on $\oplus_{i=1}^nH_\nu$. The operator $\xi\to (\sum_{1\leq i,j\leq n} d_{ij}\widetilde{\delta}_i^*\widetilde{\delta}_j)(\xi)$ can easily be derived from this one by restricting the former map to the subspace of the column space where all coordinates are equal. Using this device it is then clear that the operator $\xi\to (\sum_{1\leq i,j\leq n} d_{ij}\widetilde{\delta}_i^*\widetilde{\delta}_j)(\xi)$ must be positive definite on its domain.

\subsection{Closability criteria}

What we still need in order to effectively carry out the above strategy, is a criterion which ensures that the map $(\sum_{1\leq i,j\leq n} d_{ij}\widetilde{\delta}_i^*\widetilde{\delta}_j)$ extends to a self-adjoint positive map. In our investigation of closability we shall write $[T]$ for the closure of a closable map $T$. Extending the notion of Detailed Balance II to derivations is a good way of doing this. Specifically for a weak*-closed weak*-densely defined unital derivation $\delta$, we say that $\delta$ satisfies DBII if there exists another such derivation $\delta^\flat$, for which $\M(\delta,\delta^\flat)^{(1)}$ is weak* dense and for which we also we have that $\nu(a\delta(b))=\nu(\delta^\flat(a)b)$ for all $a, b \in \M(\delta,\delta^\flat)^{(1)}$. 

More generally we may say that the set of weak*-closed derivations $\delta_1, \dots, \delta_n$ collectively satisfy DBII if there exist derivations $\delta_i^\flat$ such that for each $i$, the pair $(\delta_i,\delta_i^\flat)$ satisfy DBII with respect to each other, and if the entire collection $\mathcal{F}_{DB}$ of 2$n$ derivations admits a weak* dense unital subalgebra ${\M}^{(2)}(\mathcal{F}_{DB})$ of ``twice differentiable'' elements. 
In order to avoid any possibility of confusion we emphasize that in the above definition of DBII for derivatives we do not demand any positivity for either $\delta$ or $\delta^\flat$. Consequently DBII for derivatives need not imply any commutativity with the modular operator. To have this property for a derivative an extra selection procedure should be employed, cf. Theorem \ref{quadder} and Proposition \ref{dermod}. The reason for identifying this property for derivatives by the name DBII, is because it is in the same vein as that given in Definition \ref{DBIIdef}.

Using the fact that $\overline{d_{ij}}=d_{ji}$ (since $D=[d_{ij}]$ is positive definite), it is then an exercise to see that for each $i$, and all $a,b\in {\M}^{(2)}$ we have that   
\begin{eqnarray*}
\langle \widetilde{\delta}_i^*(\eta(a)),\eta(b)\rangle &=& \langle \eta(a),\widetilde{\delta}_i\eta(b)\rangle\\
&=& \langle \eta(a),\eta(\delta_i(b))\rangle\\
&=& \nu(\delta_i(b)^*a)=\nu(\delta_i(b^*)a)=\nu(b^*\delta_i^\flat(a))\\
&=& \langle \eta(\delta_i^\flat(a)),\eta(b)\rangle\\
&=& \langle \widetilde{\delta}_i^\flat(\eta(a)),\eta(b)\rangle.
\end{eqnarray*}
This shows that $\widetilde{\delta}_i^*$ is an extension of $\widetilde{\delta}_i^\flat$ and therefore densely defined. 
So the restriction of $\widetilde{\delta}_i$ to $\eta({\M}^{(2)})$ is closable, and $\eta({\M}^{(2)})$ therefore a core for the closure. All of this enables one to conclude that $\widetilde{\delta}_i^{**}$ is the closure of $\widetilde{\delta}_i$, and $\widetilde{\delta}_i^*$ the closure of $\widetilde{\delta^\flat}_i$ \cite[Theorem 2.7.8]{KR}. 
Now observe that each $\delta_i$ and each $\delta_i^\flat$ maps ${\M}^{(2)}$ into ${\M}^{(1)}$. 
Having noted this fact, the above argument may now be extended to show that for all $a,b\in {\M}^{(2)}$ we have that   
\begin{eqnarray*}
\langle\sum_{1\leq i,j\leq n} d_{ij}\widetilde{\delta}_i^*\widetilde{\delta}_j(\eta(a)),\eta(b)\rangle &=& 
\sum_{1\leq i,j\leq n} \langle \eta(a),\overline{d_{ij}}\widetilde{\delta}_j^*\widetilde{\delta}_i(\eta(b))\rangle\\
&=& \sum_{1\leq i,j\leq n} \langle \eta(a),d_{ji}\widetilde{\delta}_j^*\widetilde{\delta}_i(\eta(b))\rangle\\
&=& \langle \eta(a), \sum_{1\leq i,j\leq n} d_{ij}\widetilde{\delta}_i^*\widetilde{\delta}_j(\eta(b))\rangle
\end{eqnarray*}
Thus $(\sum_{1\leq i,j\leq n} d_{ij}\widetilde{\delta}_i^*\widetilde{\delta}_j)^*$ extends $\sum_{1\leq i,j\leq n} d_{ij}\widetilde{\delta}_i^*\widetilde{\delta}_j$ and is therefore densely defined. Writing $T$ for $\sum_{1\leq i,j\leq n} d_{ij}\widetilde{\delta}_i^*\widetilde{\delta}_j$, this will as before ensure that $T$ is closable with closure $[T]=T^{**}$, and with the space $\eta({\M}^{(2)})$ being a core of this closure \cite[Theorem 2.7.8]{KR}. This then further shows that $T^{**}=[T]\subseteq T^*$and hence on taking adjoints that $T^{**}\subseteq[T]^*= T^*$. It clearly follows that $[T]=T^{**}=T^*$, thereby showing that the closure of $\sum_{1\leq i,j\leq n} d_{ij}\widetilde{\delta}_i^*\widetilde{\delta}_j$ is self-adjoint. We have already noted that $\sum_{1\leq i,j\leq n} d_{ij}\widetilde{\delta}_i^*\widetilde{\delta}_j$ is positive-definite on its natural domain. We therefore arrive at the following conclusion:

\begin{proposition}\label{Lclosed}
Suppose that the set of weak*-closed derivations $\delta_1, \dots, \delta_n$ collectively satisfy DBII. Then the operator $\sum_{1\leq i,j\leq n} d_{ij}\widetilde{\delta}_i^*\widetilde{\delta}_j$ is closable with the closure $[\sum_{1\leq i,j\leq n} d_{ij}\widetilde{\delta}_i^*\widetilde{\delta}_j]$ a self-adjoint positive definite operator for which we will by construction have that $[\sum_{1\leq i,j\leq n} d_{ij}\widetilde{\delta}_i^*\widetilde{\delta}_j](\eta(\I))=0$. If the matrix $[d_{ij}]$ is real, then $[\sum_{1\leq i,j\leq n} d_{ij}\widetilde{\delta}_i^*\widetilde{\delta}_j]$ is also reality-preserving.
\end{proposition}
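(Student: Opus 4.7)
The plan is to consolidate the work done in the paragraph immediately preceding the proposition, which has already established closability, self-adjointness, and positive-definiteness of $[T]$ where $T=\sum_{i,j}d_{ij}\widetilde{\delta}_i^*\widetilde{\delta}_j$. That argument uses the DBII hypothesis to show each $\widetilde{\delta}_i^*$ extends the densely-defined $\widetilde{\delta}_i^\flat$, then exploits $\overline{d_{ij}}=d_{ji}$ (from positive-definiteness of $D$) together with relabelling of indices to deduce $\langle T\xi,\zeta\rangle=\langle\xi,T\zeta\rangle$ on $\eta(\M^{(2)})$; the column-space device on $\oplus_{i=1}^nH_\nu$ supplies positive-definiteness. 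So I would merely harvest those facts and then dispatch the two residual assertions: $[T]\eta(\I)=0$ and reality preservation when $[d_{ij}]$ is real.

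For the vanishing, I would note that each $\delta_i$ is a unital derivation, so $\delta_i(\I)=\delta_i(\I\!\cdot\!\I)=2\delta_i(\I)$ forces $\delta_i(\I)=0$. Hence $\widetilde{\delta}_j(\eta(\I))=\eta(\delta_j(\I))=0$, which gives $T(\eta(\I))=0$ outright; since $\eta(\I)\in\eta(\M^{(2)})$ lies in the core of $[T]$, the closure inherits the same vanishing.

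For reality preservation, I would use that each $\delta_i$ is a $^*$-derivation (this is already implicit in the calculation $\nu(\delta_i(b)^*a)=\nu(\delta_i(b^*)a)$ from the preceding paragraph), and that combining the $^*$-invariance of $\nu$ with the DBII identity applied to self-adjoint arguments forces $\delta_i^\flat$ to be a $^*$-derivation as well. Consequently, for $a=a^*\in\M^{(2)}$ the element $\delta_i^\flat\delta_j(a)$ is self-adjoint, and
$$\Bigl(\sum_{i,j}d_{ij}\,\delta_i^\flat\delta_j(a)\Bigr)^{\!*}=\sum_{i,j}\overline{d_{ij}}\,\delta_i^\flat\delta_j(a)=\sum_{i,j}d_{ij}\,\delta_i^\flat\delta_j(a)$$
precisely because $[d_{ij}]$ is real. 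Thus $T$ carries $\eta(\M^{(2)}_{sa})$ into $\eta(\M_{sa})$, and since $\eta(\M^{(2)})$ is a core for $[T]$, this transfers to the statement that $[T]$ preserves the real subspace $\overline{\eta(\M_{sa})}$ of $H_\nu$.

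The main obstacle I anticipate is pinning down exactly what \emph{reality-preserving} should mean here: under the GNS identification $\eta(a)\leftrightarrow ah^{1/2}$, the natural Hilbert-space involution on Haagerup $L^2(\M)$ does not coincide with the Tomita-style map $\eta(a)\mapsto\eta(a^*)$ unless the $\widetilde{\delta}_i$ commute with $\Delta^{1/4}$, a commutation that the authors have explicitly warned is \emph{not} delivered by DBII for derivations. I would therefore read the final conclusion in the GNS picture, where \emph{real} means lying in $\overline{\eta(\M_{sa})}$, and in that picture the $^*$-derivation computation above directly yields the stated reality-preservation.
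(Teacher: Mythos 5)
Your consolidation of the preceding paragraph is exactly what the paper intends: Proposition~\ref{Lclosed} has no separate proof in the text precisely because the closability, the self-adjointness of the closure and the positive definiteness are the content of the displayed computations before it, and your argument for $[\sum_{i,j}d_{ij}\widetilde{\delta}_i^*\widetilde{\delta}_j](\eta(\I))=0$ (Leibniz forces $\delta_i(\I)=0$, hence the unclosed operator kills $\eta(\I)$, and the closure extends it) is what ``by construction'' refers to. Your observation that $\widetilde{\delta}_i^*\widetilde{\delta}_j(\eta(a))=\eta(\delta_i^\flat\delta_j(a))$ on the twice-differentiable algebra, and that $\delta_i^\flat$ inherits the $*$-property from DBII and the faithfulness of $\nu$, is also sound.

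The problem is the reality-preservation clause. ``Reality preserving'' is a defined term in Section~\ref{markov}: $x\in\mathrm{dom}$, $x^*\in\mathrm{dom}$ and $[T](x^*)=([T]x)^*$, where $*$ is the involution of $L^2(\M)$, i.e.\ the modular conjugation $J$ of the standard form under the identification $\eta(a)\leftrightarrow ah^{1/2}$; and this is the property the paper actually needs later, since in Theorem~\ref{DL2-11} (invoked through Proposition~\ref{LapMar}) it is used to conclude that the resolvents map $L^2_{sa}$ into itself. What you prove is invariance of the GNS-real subspace $\overline{\M_{sa}h^{1/2}}$ under the unclosed operator, which is a different real subspace: the GNS involution $\eta(a)\mapsto\eta(a^*)$ is $S=J\Delta^{1/2}$ (not $\Delta^{1/4}$), so the two notions coincide only under modular commutation, which --- as you yourself note --- DBII for derivations does not supply. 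So your final clause proves a genuinely weaker statement than the one asserted, and it is the weaker one that cannot be fed into Theorem~\ref{DL2-11}. Moreover even that weaker statement does not ``transfer to the closure'' merely because $\eta(\M^{(2)})$ is a core: for $\xi\in\mathrm{dom}([T])\cap\overline{\eta(\M_{sa})}$ there is no reason the graph-norm approximants from the core can be chosen in $\eta(\M^{(2)}_{sa})$, because the symmetrization $\eta(a)\mapsto\tfrac12(\eta(a)+\eta(a^*))$ is an unbounded operation; passing a real-subspace invariance to closures is routine only for the bounded involution $J$. To be fair, the paper offers no argument for this clause either, and your diagnosis of the obstruction is accurate; but the honest options are to prove $J[T]J=[T]$ on the core under an additional modular-commutation hypothesis (as is available in the setting of Theorem~\ref{quadder} and Proposition~\ref{dermod}, where it does pass to the closure since $J$ is bounded and antiunitary), or to state explicitly that under bare DBII one only gets invariance of $\eta(\M^{(2)}_{sa})$ under the unclosed operator --- rather than silently redefining ``reality preserving''.
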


There are surely examples of Laplacians in the literature which have dense domain and are not closed. To clarify this conundrum, we firstly point out that in the above setting the assumption of DBII makes all the difference in that it ensures that the operator $\sum_{1\leq i,j\leq n} d_{ij}\widetilde{\delta}_i^*\widetilde{\delta}_j$ admits a self-adjoint extension, and secondly that it is not this operator that we take to be the quantum Laplacian, but its closed extension. The following is now an easy consequence of the Lumer-Philips theorem.
 
\begin{corollary}
Under the same assumptions as in the previous proposition, the closure $\mathcal{L}=-[\sum_{1\leq i,j\leq n} d_{ij}\widetilde{\delta}_i^*\widetilde{\delta}_j]$ generates a $C_0$-semigroup $\widetilde{K}_t$ of contractions on $H_\nu$. Each element of the semigroup is a positive definite operator which preserves $h^{1/2}$.
\end{corollary}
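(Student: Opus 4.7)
The plan is to directly apply the Lumer-Phillips theorem to $\mathcal{L}$, using the structural information about $A := [\sum_{1\leq i,j\leq n} d_{ij}\widetilde{\delta}_i^*\widetilde{\delta}_j]$ furnished by Proposition \ref{Lclosed}. That proposition already delivers the three things that matter most: $A$ is densely defined (with $\eta(\M^{(2)})$ a core), self-adjoint, and positive definite. Consequently $\mathcal{L} = -A$ is densely defined, closed, and dissipative, since $\mathrm{Re}\langle \mathcal{L}\xi,\xi\rangle = -\langle A\xi,\xi\rangle \leq 0$ on $\mathrm{dom}(A)$.

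The remaining Lumer-Phillips hypothesis --- that $\mathrm{range}(\lambda I - \mathcal{L})$ be dense in $H_\nu$ for some $\lambda > 0$ --- is automatic from self-adjointness: for any $\lambda > 0$, $\lambda I + A$ is surjective by the spectral theorem, so $\lambda I - \mathcal{L} = \lambda I + A$ is onto. Hence $\mathcal{L}$ generates a $C_0$-semigroup $\widetilde{K}_t = e^{t\mathcal{L}}$ of contractions on $H_\nu$. For the positive-definiteness claim I would invoke Borel functional calculus directly: since $\mathcal{L}$ is self-adjoint with spectrum contained in $(-\infty,0]$, the function $\lambda \mapsto e^{t\lambda}$ maps the spectrum of $\mathcal{L}$ into $(0,1]$, so each $\widetilde{K}_t$ is self-adjoint with spectrum in $[0,1]$, and in particular positive definite on $H_\nu$.

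Preservation of $h^{1/2}$ reduces to the observation that unital $*$-derivations annihilate $\I$: the Leibniz identity $\delta_i(\I) = \delta_i(\I \cdot \I) = \delta_i(\I) + \delta_i(\I)$ forces $\delta_i(\I) = 0$, and hence $\widetilde{\delta}_i\eta(\I) = \eta(\delta_i(\I)) = 0$ for each $i$. Thus $\eta(\I) \in \mathrm{dom}(A)$ with $A\eta(\I) = 0$, so $\mathcal{L}\eta(\I) = 0$, which forces $\widetilde{K}_t\eta(\I) = \eta(\I)$ for every $t \geq 0$. Under the identification $\eta(a) \leftrightarrow ah^{1/2}$ discussed in Section \ref{s2}, this is exactly preservation of $h^{1/2}$. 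No real obstacle presents itself here: the heavy lifting was already done in establishing the self-adjointness of the closure in Proposition \ref{Lclosed}; the present corollary is a clean application of functional calculus together with the elementary algebraic identity $\delta_i(\I) = 0$.
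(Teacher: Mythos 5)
Your argument is correct and follows essentially the same route as the paper: dissipativity of $\mathcal{L}$ together with the self-adjointness and positive definiteness supplied by Proposition \ref{Lclosed} is fed into the Lumer--Phillips theorem to produce the contraction semigroup, with $\mathcal{L}(h^{1/2})=0$ yielding the invariance of $h^{1/2}$. The only cosmetic difference is that you conclude positive definiteness of each $\widetilde{K}_t$ and the preservation of $h^{1/2}$ directly from the spectral (functional) calculus of the self-adjoint generator, whereas the paper deduces both by passing to the strong limit of the Yosida approximants $e^{t\lambda\mathcal{L}(\lambda-\mathcal{L})^{-1}}$; the two devices are interchangeable here.
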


\begin{proof} For every $\xi\in \mathrm{dom}(-[\sum_{1\leq i,j\leq n} d_{ij}\widetilde{\delta}_i^*\widetilde{\delta}_j])$, 
$\langle\cdot,\xi\rangle$ is a tangent functional corresponding to $\xi$. Since $-\langle[\sum_{1\leq i,j\leq n} d_{ij}\widetilde{\delta}_i^*\widetilde{\delta}_j]\xi,\xi\rangle\leq 0$ for each such $\xi$, the operator 
$\mathcal{L}=-[\sum_{1\leq i,j\leq n} d_{ij}\widetilde{\delta}_i^*\widetilde{\delta}_j]$ is dissipative. Equivalently 
$\|(\lambda\mathrm{id}-\mathcal{L})\xi\|_2\geq \lambda\|\xi\|_2$ for every $\lambda>0$ and every $\xi\in \mathrm{dom}(\mathcal{L})$ 
(Lumer-Philips). Since $\mathcal{L}$ is self-adjoint, this is enough to ensure that if $\lambda>0$, then $\lambda\in\rho(\mathcal{L})$ with 
$\|(\lambda\mathrm{id}-\mathcal{L})^{-1}\|\leq \lambda^{-1}$. This ensures that $\mathcal{L}$ is the generator of a semigroup $\widetilde{K}_t$ of contractions. 
The fact that each member $\widetilde{K}_t$ of the semigroup is positive definite follows from the fact that each $\widetilde{K}_t$ is the strong 
limit of $e^{t\lambda \mathcal{L}(\lambda-\mathcal{L})^{-1}}$ as $\lambda\to \infty$. Since $\mathcal{L}(h^{1/2})=0$, we have that $e^{t\lambda \mathcal{L}(\lambda-\mathcal{L})^{-1}}(h^{1/2})=h^{1/2}$ for each $t$ and each $\lambda$, and hence that $\widetilde{K}_t(h^{1/2})=h^{1/2}$ as claimed.
\end{proof}

\begin{remark}\label{Lapdim} On the basis of the analysis to date, the operator \newline $\mathcal{L}=-[\sum_{1\leq i,j\leq n} d_{ij}\widetilde{\delta}_i^*\widetilde{\delta}_j]$ described in the Proposition \ref{Lclosed}, has emerged as the best candidate for the title of ``quantum Laplacian''. In the rest of this paper we shall alway have this operator in mind when speaking of a quantum Laplacian. At first sight it may seem strange to have what is essentially an ``$n$-dimensional'' Laplacian in a context devoted to large systems, and hence we pause to comment on this point.

In many situations dynamics is controlled by some a priori given group action, and it is ultimately this group action that determines the dimensionality of the quantum Laplacian. Typically the set of derivations from which the quantum Laplacian is constructed, would be a set of infinitesimal generators of this group action. Such sets of derivations may then be used to construct quantum Laplacians which by their nature should be viewed as Laplacians of the group action. One may for example consider the von Neumann algebraic version of the Haag-Kastler local algebras where the dynamics is described by a *-automorphic action of $n$-dimensional Minkowski space on the underlying algebra. If then along the same vein we had a von Neumann algebra admitting a weak*-continuous *-automorphic action of the group $\mathbb{R}^n$, one may easily obtain an $n$-dimensional space of weak* closed weak* densely defined derivations which act as the generating set of this action in the following sense: 
For such an action the derivations $\delta_i$ would be the generators of the action of the coordinate axes on $\M$. In the space of derivations, addition is defined by strong closure, with a linear combination of the $\delta_i$'s representing the scaled generator of the *-automorphism induced by the action of $\mathbb{R}^n$ along the line through origin in the direction of the corresponding linear combination of the unit vectors $e_i$ corresponding to the coordinate axes. (The proof that the generator of the action along such a line is indeed such a linear combination, is a refinement of the description of directional derivatives typically presented in an undergraduate multi-variable Calculus course.) For a detailed analysis of such and also more general actions, the reader should refer to \cite{Bra}. We again take note that although the focus of \cite{Bra} is more on norm-closed and norm-densely defined derivations, the arguments readily carry over to the present context.

As far as applications are concerned local von Neumann algebras admitting a weak* continuous automorphic action of the group $\mathbb{R}^n$, therefore presents a class very well suited to this theory. On a closely related note, the von Neumann versions of the local algebras of Haag and Kastler presents a particularly interesting class. This class is however built around $n$-dimensional Minkowski space $\mathbb{M}_n$, and so to reflect the structure of Minkowski space, the precise formulation of the Laplacian needs to be modified to an expression of the form $\mathcal{L}=-[\widetilde{\delta}_0^*\widetilde{\delta}_0^*+\sum_{1\leq i,j\leq (n-1)} d_{ij}\widetilde{\delta}_i^*\widetilde{\delta}_j]$, where $\delta_0$ is the generator of the action of the time-coordinate of $\mathbb{M}_n$, and the space $\mathrm{span}\{\delta_i\colon 1\leq i\leq(n-1)\}$ the generating space of the spatial-coordinates.
\end{remark}

\subsection{Laplacian Markov dynamics}

Encouraging as the preceding results may be, they don't go far enough, in that we seek a Laplacian which induces Markov dynamics. For this we turn to the theory of derivations which behave well with regard to quadratic forms. Our first result is essentially a type III version of \cite[Lemma 2.1 \& Proposition 2.11]{DL}.

\begin{theorem}\label{DL2-11}
Let $A$ be a positive definite reality preserving operator on $L^2(\M)$, and let $Q(x)=\|A^{1/2}(x)\|_2^2$ be the corresponding quadratic form. Let $(T_t)$ be the semigroup of contractive self-adjoint operators induced by $-A$, and $R(\lambda)$ the resolvents $(\lambda\I+A)^{-1}$. Then the following are equivalent: 
\begin{enumerate}
\item $Q(|x|)\leq Q(x)$ for all $x\in L^2_{sa}$.
\item $(\lambda\I+A)^{-1}(x)\geq 0$ for all $x\in L^2_+$ and all $\lambda>0$.
\item $T_t(x)\geq 0$ for all $x\in L^2_+$ and all $t>0$.
\end{enumerate}  
\end{theorem}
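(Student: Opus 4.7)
The plan is to split the three-way equivalence into the standard semigroup-theoretic pair $(2)\Leftrightarrow(3)$ and the Beurling--Deny-type equivalence $(1)\Leftrightarrow(2)$. For $(2)\Leftrightarrow(3)$, I would invoke the Laplace transform representation $R(\lambda)=\int_0^\infty e^{-\lambda t}T_t\,dt$ (valid since $-A$ generates $(T_t)$) for the implication $(3)\Rightarrow(2)$, and the Hille--Yosida formula $T_t\xi=\lim_{n\to\infty}\bigl[(n/t)R(n/t)\bigr]^n\xi$ for the converse. The key noncommutative fact I rely on throughout is that, by Remark \ref{natcone}, $L^2_+(\M)$ \emph{is} the natural cone; hence any $y\in L^2_{sa}(\M)$ decomposes via the functional calculus of $\tau$-measurable operators as $y=y_+-y_-$ with $y_\pm\in L^2_+$, $y_+y_-=0$, $\||y|\|_2=\|y\|_2$, and $\langle y_+,y_-\rangle=0$, while self-duality of the natural cone ensures $\langle\xi,\eta\rangle\geq 0$ for all $\xi,\eta\in L^2_+$. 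Since $A$ is reality-preserving, so are $R(\lambda)$, $T_t$, and the form $Q$ on $L^2_{sa}$.

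For $(1)\Rightarrow(2)$ I would fix $x\in L^2_+$ and $\lambda>0$, set $y=R(\lambda)x$, and characterise $y$ as the unique minimizer on $\operatorname{dom}(A^{1/2})$ of the strictly convex functional
\[
\Phi(u)=\lambda\|u\|_2^2+Q(u)-2\operatorname{Re}\langle x,u\rangle,
\]
whose stationarity condition is precisely $(\lambda\I+A)u=x$. Reality preservation forces $y=y^*$, so $|y|\in L^2_+$ is available, and using $\||y|\|_2=\|y\|_2$ together with $|y|-y=2y_-$ one computes
\[
\Phi(|y|)-\Phi(y)=\bigl(Q(|y|)-Q(y)\bigr)-4\langle x,y_-\rangle.
\]
Hypothesis (1) makes the first summand $\leq 0$ and self-duality of the cone makes the second $\leq 0$, whence $\Phi(|y|)\leq\Phi(y)$. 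But minimality forces equality, and strict convexity of $\Phi$ then forces $|y|=y$, i.e., $y_-=0$ and $y\in L^2_+$.

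For $(2)\Rightarrow(1)$ I would pass through the Yosida approximants $A_\lambda=\lambda\I-\lambda^2R(\lambda)$, which are bounded, positive, self-adjoint, and satisfy $\langle A_\lambda\xi,\xi\rangle\nearrow Q(\xi)$ as $\lambda\to\infty$ for every $\xi\in\operatorname{dom}(A^{1/2})$. For $x\in L^2_{sa}\cap\operatorname{dom}(A^{1/2})$, decomposing $x=x_+-x_-$ and expanding the inner products gives
\[
\langle R(\lambda)|x|,|x|\rangle-\langle R(\lambda)x,x\rangle=2\langle R(\lambda)x_+,x_-\rangle+2\langle R(\lambda)x_-,x_+\rangle,
\]
and the right-hand side is $\geq 0$ by (2) combined with self-duality of the cone. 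Since $\||x|\|_2=\|x\|_2$, this rearranges to $\langle A_\lambda|x|,|x|\rangle\leq\langle A_\lambda x,x\rangle$; letting $\lambda\to\infty$ yields $Q(|x|)\leq Q(x)$.

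The main technical obstacle is not any single inequality but the systematic verification that the natural-cone structure of Haagerup $L^2(\M)$ supports the same spectral-and-duality calculus as the tracial setting: one needs $y_\pm\in L^2_+$ orthogonal in $L^2$ for self-adjoint $y$, self-duality of $L^2_+$ under the canonical inner product, and compatibility of reality preservation with $R(\lambda)$, $T_t$, and $A^{1/2}$. All of this follows from Remark \ref{natcone} and standard facts about $\tau$-measurable operators, but it is precisely these noncommutative underpinnings that make the result a genuine ``type III version'' rather than a transcription of the semifinite Davies--Lindsay argument.
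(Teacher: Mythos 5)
Your proposal is correct, and its substance coincides with the paper's argument: the implication $(1)\Rightarrow(2)$ via your minimization of $\Phi(u)=\lambda\|u\|_2^2+Q(u)-2\mathrm{Re}\langle x,u\rangle$ is, after completing the square, literally the paper's computation $\|(\lambda\I+A)^{1/2}(y-|y|)\|^2=\langle x,y\rangle-2\langle x,|y|\rangle+\lambda\|\,|y|\,\|^2+Q(|y|)\leq 0$, and $(2)\Rightarrow(3)$ uses the same Hille--Yosida limit $T_t=\lim_n[\tfrac{n}{t}R(\tfrac{n}{t})]^n$. The one genuine organizational difference is how the equivalence is closed: the paper runs the cycle $(3)\Rightarrow(1)$ using the semigroup-based approximating forms $Q_t(x)=\tfrac{1}{t}\langle(\I-T_t)x,x\rangle$ and the identity $\langle T_t(x),x\rangle=\langle T_t(|x|),|x|\rangle-4\langle T_t(x_-),x_+\rangle$, whereas you prove $(3)\Rightarrow(2)$ by the Laplace transform and then $(2)\Rightarrow(1)$ via the Yosida approximants $A_\lambda=\lambda\I-\lambda^2R(\lambda)$ and the expansion $\langle R(\lambda)|x|,|x|\rangle-\langle R(\lambda)x,x\rangle=2\langle R(\lambda)x_+,x_-\rangle+2\langle R(\lambda)x_-,x_+\rangle$. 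These are parallel Beurling--Deny-type approximation arguments resting on the same noncommutative inputs (self-duality of $L^2_+$ as the natural cone, $|x|-x=2x_-$, $\|\,|x|\,\|_2=\|x\|_2$, reality preservation passing to resolvents), and both require the standard monotone-convergence fact for closed forms with the value $+\infty$ allowed off the form domain in order to handle $|x|$ before it is known to lie in $\mathrm{dom}(A^{1/2})$; neither route buys anything the other lacks.
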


\begin{proof} Suppose that (1) holds, let $x\in L^2_+$ and $\lambda>0$ be given, and set $y=R(\lambda)x$. By the hypothesis $y=(\lambda\I+A)^{-1}(x)$ is self-adjoint. Therefore 
$\langle x, y\rangle=tr(yx)=tr(xy)=\langle y, x\rangle$ with a similar claim holding for $\langle x,|y|\rangle$. In addition 
$\langle x,y\rangle=tr(yx)=tr(x^{1/2}yx^{1/2})\leq tr(x^{1/2}|y|x^{1/2})=tr(|y|x)=\langle x,|y|\rangle$. But then 
\begin{eqnarray*}
\|(\lambda\I+A)^{1/2}(y-|y|)\|^2 &=& \langle x, y\rangle-2\langle x, |y|\rangle+\lambda\|\,|y|\,\|^2+Q(|y|)\\
&\leq& \langle x, y\rangle-2\langle x, y\rangle+\lambda\|y\|^2+Q(y)\\
&=& 0,
\end{eqnarray*}
whence $y=|y|$. So $|R(\lambda)(x)|=R(\lambda)(x)$.

\medskip

The implication(2)$\Rightarrow$(3) follows from the fact that $T_t=\lim_{n\to\infty}\frac{n}{t}R(\frac{n}{t})^n$.

\medskip

Finally suppose that (3) holds. Let $Q_t$ be defined by $Q_t(x)=\frac{1}{t}\langle(\I-T_t)(x),x\rangle$. The fact that each $T_t$ is positive definite and contractive ensures that each $(\I-T_t)$ is positive definite, and hence that each $Q_t$ is a quadratic form. For any $x\in L^2_{sa}$, the fact that the $T_t$'s preserve positivity, further ensure that 
\begin{eqnarray*}
\langle T_t(x),x\rangle &=& \langle T_t(x_+),x_+\rangle + \langle T_t(x_-),x_-\rangle-[\langle T_t(x_-),x_+\rangle + 
\langle T_t(x_+),x_-\rangle]\\
&=& \langle T_t(x_+),x_+\rangle + \langle T_t(x_-),x_-\rangle - [\langle T_t(x_-),x_+\rangle+\langle x_+, T_t(x_-)\rangle]\\
&=& \langle T_t(x_+),x_+\rangle + \langle T_t(x_-),x_-\rangle - 2\langle T_t(x_-),x_+\rangle\\
&=& \langle T_t(|x|),|x|\rangle - 4\langle T_t(x_-),x_+\rangle\\
&\leq& \langle T_t(|x|),|x|\rangle,
\end{eqnarray*}
and hence that $Q_t(|x|)\leq Q_t(x)$ for each $t$. Since $Q$ is closed and given by $Q(x)=\lim_{t\to\infty}Q_t(x)$ 
(infinities allowed), the implication follows.
\end{proof}

\begin{definition} A weak*-closed weak*-densely defined derivation on $\M$ is said to be a quadratic derivation if the prescription $\widetilde{\delta}(h^{1/4}ah^{1/4})= h^{1/4}\delta(a)h^{1/4}$ admits a closed extension to $L^2(\M)$ such that for any self-adjoint $x\in \mathrm{dom}(\widetilde{\delta})$, we will have that $|x|\in \mathrm{dom}(\widetilde{\delta})$, with $\|\widetilde{\delta}(|x|)\|\leq \|\widetilde{\delta}(x)\|$.
\end{definition}

Armed with the above technology we are now able to identify a large class of quadratic derivations. This result is a type III analogue of \cite[Proposition 5.4]{DL}. Note that \cite[Proposition 5.4]{DL} makes the slightly weaker assertion that derivations of the type considered below are so-called Dirichlet derivations.

\begin{theorem}\label{quadder}
Let $(\alpha_t)$ be a *-automorphism group on $\M$ which observes detailed balance II with respect to another automorphism group $(\alpha_t^\flat)$. Then the generators $\delta$ and $\delta^\flat$ are both quadratic derivations which are in detailed balance II with respect to each other. For the generator $\delta$, each of the prescriptions $\delta_c:(h^{c/2}ah^{(1-c)/2})\to h^{c/2}\delta(a)h^{(1-c)/2}$ (where $0\leq c\leq 1$) define a closable operator on $L^2(\M)$ which all yield the same closed operator on $L^2(\M)$. A similar claim holds for $\delta^\flat$. More generally for any $f\in \mathcal{Z}(\M)_{sa}$ both $f\delta$ and $f\delta^\flat$ are quadratic derivations. 
\end{theorem}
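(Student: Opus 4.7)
The plan is to unpack the theorem's assertions in sequence, reducing each to properties of the automorphism group $(\alpha_t)$ and then exporting these to its generator $\delta$.

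First I would extract the invariance consequences of DBII. Setting $b = \I$ in $\nu(a\alpha_t(b)) = \nu(\alpha_t^\flat(a)b)$ and using unitality of $\alpha_t^\flat$ gives $\nu \circ \alpha_t = \nu$; symmetrically $\nu \circ \alpha_t^\flat = \nu$. Since $\nu$ is $\alpha_t$-invariant, the trivial identity $\nu(a\alpha_t(b)) = \nu(\alpha_{-t}(a)b)$ combined with uniqueness of the DBII partner forces $\alpha_t^\flat = \alpha_{-t}$, and hence $\delta^\flat = -\delta$. The derivation-level DBII identity then follows on the common core $\M^{(1)}(\{\delta\})$ by differentiating $\nu\circ\alpha_t = \nu$ applied to $ab$: the Leibniz rule yields $0 = \nu(\delta(ab)) = \nu(\delta(a)b) + \nu(a\delta(b))$, giving $\nu(a\delta(b)) = \nu(-\delta(a)\cdot b) = \nu(\delta^\flat(a)b)$.

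Next, the lift $\widetilde{\alpha}_t(\eta(a)) = \eta(\alpha_t(a))$ is isometric by $\nu$-invariance and invertible, so $(\widetilde{\alpha}_t)$ is a unitary group on $L^2(\M) \cong H_\nu$. Each $\alpha_t$ is a unital CP map satisfying DBII with $\alpha_t^\flat$, so Theorem \ref{Vcommdelta} applies and gives $[\widetilde{\alpha}_t, \Delta^{1/4}] = 0$ together with the fact that the prescription $h^{c/2}ah^{(1-c)/2} \to h^{c/2}\alpha_t(a)h^{(1-c)/2}$ yields the same bounded operator for every $c\in[0,1]$. Differentiating this common prescription on a weak*-dense core for $\delta$ and exploiting weak*-closedness of $\delta$ shows that each $\delta_c$ is closable with a single common closure, namely the generator of $(\widetilde{\alpha}_t)$, which in particular commutes with $\Delta^{1/4}$. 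The same argument handles $\delta^\flat$.

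For the quadratic derivation property of $\delta$, the generator of a unitary group is skew-adjoint, so putting $H = -i\widetilde{\delta}$ we have $\widetilde{\delta}^*\widetilde{\delta} = H^2$ and the heat semigroup admits the Gaussian subordination
\[
e^{-t\widetilde{\delta}^*\widetilde{\delta}} \;=\; e^{-tH^2} \;=\; \int_{-\infty}^{\infty} \frac{1}{\sqrt{4\pi t}}\, e^{-s^2/(4t)}\, \widetilde{\alpha}_s\, ds.
\]
Each $\widetilde{\alpha}_s$ preserves the natural cone $L^2_+(\M)$: by Remark \ref{natcone} together with commutation with $\Delta^{1/4}$, it suffices to note that $\widetilde{\alpha}_s$ carries the dense subset $h^{1/4}\M_+ h^{1/4}$ of $L^2_+(\M)$ back into itself. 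Positivity preservation is stable under positive integrals, so $e^{-t\widetilde{\delta}^*\widetilde{\delta}}$ preserves $L^2_+$. Theorem \ref{DL2-11} applied with $A = \widetilde{\delta}^*\widetilde{\delta}$ then delivers $\|\widetilde{\delta}(|x|)\|_2 \leq \|\widetilde{\delta}(x)\|_2$ for every self-adjoint $x \in \mathrm{dom}(\widetilde{\delta})$, identifying $\delta$ as a quadratic derivation; the same argument applied to $(\alpha_{-t})$ handles $\delta^\flat$.

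The claim for $f\delta$ with $f \in \mathcal{Z}(\M)_{sa}$ is where I expect the main obstacle. Since the modular group fixes the centre pointwise, $f$ commutes with $\Delta^{it}$ for every real $t$ and, via functional calculus for $h$, with each $h^{c/2}$. Hence $(f\delta)_c = M_f \circ \delta_c$, the closures of all $(f\delta)_c$ coincide with the bounded perturbation $M_f\widetilde{\delta}$, and centrality of $f$ preserves the derivation identity. The sticking point is the quadratic inequality $\|M_f\widetilde{\delta}(|x|)\|_2 \leq \|M_f\widetilde{\delta}(x)\|_2$, which does not follow mechanically from the unweighted version since $M_f$ need not commute with $\widetilde{\delta}$ unless $\delta(f) = 0$. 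My plan is first to reduce to $f = \sum_j \lambda_j p_j$ with $p_j$ pairwise orthogonal central projections (by bounded spectral approximation using the spectral measure of $f$ in the abelian $\mathcal{Z}(\M)$), and then to establish the weighted inequality via a direct-integral decomposition over the spectrum of $f$; on each spectral sector $f$ acts as a scalar, so the weighted inequality reduces to the already-proved unweighted one. The technical heart, and main obstacle, is verifying that the modulus operation $x \to |x|$ interacts compatibly with this central direct-integral structure, even though $\widetilde{\delta}$ itself need not respect it.
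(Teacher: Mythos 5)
Your handling of the first two assertions is essentially sound, and for the quadratic-derivation property you take a genuinely different route from the paper. The identifications $\alpha_t^\flat=\alpha_{-t}$, $\delta^\flat=-\delta$ and the derivation-level DBII identity match the paper's opening moves. Where the paper proves $tr(x\widetilde{\alpha}_t(x))\leq tr(|x|\widetilde{\alpha}_t(|x|))$ by a Cauchy--Schwarz argument with the partial isometry $s_+-s_-$, deduces $\|\widetilde{\alpha}_t(|x|)-|x|\|_2\leq\|\widetilde{\alpha}_t(x)-x\|_2$, and invokes Lemma \ref{D1-39}, you instead use Gaussian subordination of $e^{-t\widetilde{\delta}^*\widetilde{\delta}}$ through the unitary group $(\widetilde{\alpha}_s)$ together with Theorem \ref{DL2-11}, (3)$\Rightarrow$(1). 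Granted cone preservation of each $\widetilde{\alpha}_s$ (which you obtain from modular commutation and Remark \ref{natcone}, and which the paper obtains through its crossed-product construction), this is correct and arguably cleaner; it also buys positivity of the heat semigroup directly, which is what Proposition \ref{LapMar} needs later. One citation is off: Theorem \ref{Vcommdelta} concerns $e^{t(T-\mathrm{id})}$ and only the embeddings $c=0,\tfrac12$; the $c$-uniformity of the lift of $\alpha_t$ for all $c\in[0,1]$ comes from the CP-map theorem following it (its hypotheses hold because DBII gives $\nu$-invariance and commutation with $\sigma^\nu_t$ by \cite{MS}), or from the paper's crossed-product argument.

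There is, however, a genuine gap at the claim that the $\delta_c$ ``all yield the same closed operator''. Differentiating the lifted group only shows that each $\delta_c$ is a restriction of the generator $\widetilde{\delta}$; that gives closability, but a restriction of a closed operator can have a strictly smaller closure, and weak*-closedness of $\delta$ contributes nothing here. What must be proved is that $h^{c/2}\mathrm{dom}(\delta)h^{(1-c)/2}$ is a \emph{core} for $\widetilde{\delta}$ for every $c$; this is exactly where the paper spends its maximal-symmetry argument, identifying $\delta_{(c)}^*\overline{\delta_{(c)}}$ and $\widetilde{\delta}^*\widetilde{\delta}$ as self-adjoint extensions of the symmetric operator $\delta_{(c)}^\flat\delta_{(c)}$. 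Your argument can be repaired more cheaply than that: each subspace $h^{c/2}\mathrm{dom}(\delta)h^{(1-c)/2}$ is dense and invariant under $(\widetilde{\alpha}_t)$, so it is a core for the generator by the standard invariance criterion \cite[Corollary 3.1.7]{BR}; but as written the step is missing.

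The final claim, that $f\delta$ and $f\delta^\flat$ are quadratic derivations for $f\in\mathcal{Z}(\M)_{sa}$, is not established, and your sketch stalls at a different point than the one you flag. The modulus is in fact compatible with central decompositions: a central projection $p$ of $\M$ lies in the centralizer of $\nu$, hence is central in $\M\rtimes_\nu\mathbb{R}$, so it commutes with every element of $L^2(\M)$ and $|pxp|=p|x|p$; that part is easy. The real obstruction is that $\widetilde{\delta}$ need not commute with multiplication by $p$ (the derivation may move the centre), so on a spectral sector of $f$ the weighted inequality does not reduce to the unweighted one: you would need $\|p\widetilde{\delta}(|x|)\|_2\leq\|p\widetilde{\delta}(x)\|_2$, which is not a consequence of $\|\widetilde{\delta}(|x|)\|_2\leq\|\widetilde{\delta}(x)\|_2$. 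For comparison, the paper treats this claim by reducing to $f\geq 0$ invertible and combining the two-sided comparison $Q_f(|x|)\leq\|f\|^2\|f^{-1}\|^2Q_f(x)$ with Theorem \ref{DL2-11} — a step worth examining closely in its own right — but in any case your direct-integral strategy, as described, does not yet close this part of the theorem.
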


The proof of the theorem relies on the following very elegant lemma.

\begin{lemma}[Corollary 1.39, \cite{D}]\label{D1-39} If $A$ is the generator of a one-parameter $C_0$ semigroup $(T_t)$ on a reflexive Banach space $X$, then $\mathrm{dom}(A)=\{x\in X\colon\liminf_{t\to 0}\|T_t(x)-x\|<\infty\}$.
\end{lemma}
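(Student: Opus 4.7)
The strategy will be to lift $(\alpha_t)$ to a strongly continuous one-parameter group of unitaries on $L^2(\M)$, identify its skew-adjoint generator with the common closure of the various $\delta_c$'s, and exploit a Gaussian representation of the associated heat semigroup to deduce the quadratic property via Theorem \ref{DL2-11}. The subsequent central-$f$ case will then be reduced to the primary case via a non-commutative Beurling--Deny type inequality implicit in Theorem \ref{DL2-11}.

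Setting $a=\I$ in the DBII relation $\nu(a\alpha_t(b))=\nu(\alpha_t^\flat(a)b)$ shows $\nu\circ\alpha_t=\nu$. The argument used in the proof of Theorem \ref{Vcommdelta} then shows that each natural lift of $\alpha_t$ commutes with $\Delta^{1/4}$; the CP-coincidence theorem at the end of Section \ref{PSP} (applicable since every *-automorphism is CP) consequently forces all the prescriptions $\alpha_{t,c}\colon h^{c/2}ah^{(1-c)/2}\mapsto h^{c/2}\alpha_t(a)h^{(1-c)/2}$ to extend to one and the same bounded operator $\widetilde{\alpha}_t$ on $L^2(\M)$. Since $\nu$ is invariant and $\alpha_t$ is bijective, $\widetilde{\alpha}_t$ is unitary, and strong continuity transfers from $\M$ to $L^2(\M)$ via density of $\M h^{1/2}$. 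By Stone's theorem $\widetilde{\alpha}_t=e^{itA}$ for some self-adjoint $A$, so $\widetilde{\delta}:=iA$ is skew-adjoint. Differentiating the equalities $\alpha_{t,c}=\alpha_{t,c'}$ at $t=0$ on the common dense core of $C^1$-elements of the group (invoking Lemma \ref{D1-39} to identify this set intrinsically as a core of the generator) shows that each prescription $\delta_c$ is closable with one and the same closure $\widetilde{\delta}$. The DBII relation $\nu(a\delta(b))=\nu(\delta^\flat(a)b)$ is then obtained by differentiating the corresponding relation for the automorphism groups on this common core, with the parallel argument applied to $(\alpha_t^\flat)$ yielding the same conclusions for $\delta^\flat$.

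For the quadratic property of $\delta$, consider the form $Q(x)=\|\widetilde{\delta}(x)\|_2^2=\langle A^2 x,x\rangle$. The operator $A^2$ is positive, self-adjoint, and reality preserving (the latter because $\widetilde{\alpha}_s$ preserves the operator involution on $L^2(\M)$ when expressed in the symmetric embedding, so $A$ is skew-real and hence $A^2$ is real). Theorem \ref{DL2-11} therefore reduces the Markov property to showing that $e^{-tA^2}$ preserves $L^2_+(\M)$, and this follows at once from the Gaussian subordination identity
$$e^{-tA^2}=\frac{1}{\sqrt{4\pi t}}\int_{-\infty}^{\infty} e^{-s^2/4t}\,\widetilde{\alpha}_s\,ds,$$
valid in the strong operator topology by functional calculus on $A$, since each $\widetilde{\alpha}_s$ lifts a *-automorphism and hence preserves $L^2_+(\M)$, while the Gaussian kernel is non-negative. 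The equivalence in Theorem \ref{DL2-11} then yields $Q(|x|)\le Q(x)$, which together with $Q(|x|)<\infty$ gives $|x|\in\mathrm{dom}(\widetilde{\delta})$ and $\|\widetilde{\delta}(|x|)\|_2\le\|\widetilde{\delta}(x)\|_2$.

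For $f\in\mathcal{Z}(\M)_{sa}$, centrality gives $(f\delta)(ab)=(f\delta(a))b+a(f\delta(b))$, so $f\delta$ is a genuine derivation; its $L^2$-closure identifies with the composition $\xi\mapsto f\cdot\widetilde{\delta}(\xi)$, where $f$ acts by left multiplication on $L^2(\M)$. The inequality $\|f\widetilde{\delta}(|x|)\|_2\le\|f\widetilde{\delta}(x)\|_2$ will be obtained by combining the pointwise-domination statement $\widetilde{\delta}(|x|)^*\widetilde{\delta}(|x|)\le\widetilde{\delta}(x)^*\widetilde{\delta}(x)$ (a non-commutative Beurling--Deny inequality refining Step~3, adapted from \cite[Proposition 5.4]{DL} to the Haagerup setting) with the positivity and centrality of $f^2$. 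The main obstacle is precisely this last step: the clean Gaussian argument works for $\delta$ because $\widetilde{\delta}=iA$ is literally skew-adjoint, but $f\widetilde{\delta}$ is not (since $f$ need not commute with $A$, the modular structure not forcing any such commutation for elements of the center alone), so the central-$f$ case genuinely requires the Beurling--Deny ingredient rather than the functional calculus shortcut. A secondary delicate point is ensuring that all the $\delta_c$'s genuinely share a common closure, which must be deduced from coincidence of the $\alpha_{t,c}$'s at the group level via Lemma \ref{D1-39}, rather than by any direct manipulation at the level of generators.
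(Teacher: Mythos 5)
Your proposal does not address the statement in question. The statement to be proved is Lemma \ref{D1-39}, a purely abstract fact about $C_0$-semigroups on reflexive Banach spaces (cited by the paper from Corollary 1.39 of Davies' book \cite{D}): the domain of the generator coincides with the set of $x$ for which $\liminf_{t\to 0}\|T_t(x)-x\|/t$ stays finite. What you have written is instead a proof attempt for Theorem \ref{quadder} (that the generators of automorphism groups in detailed balance II are quadratic derivations, with all the embeddings $\delta_c$ sharing a common closure). Worse, your argument explicitly \emph{invokes} Lemma \ref{D1-39} as a tool (``invoking Lemma \ref{D1-39} to identify this set intrinsically as a core of the generator''), so even if one charitably read your text as an indirect route to the lemma, it would be circular.

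For the record, the actual content of Lemma \ref{D1-39} requires a completely different (and much shorter) argument, of which reflexivity is the crux: the inclusion $\mathrm{dom}(A)\subseteq\{x\colon\liminf_{t\to 0}\frac{1}{t}\|T_t(x)-x\|<\infty\}$ is immediate from the definition of the generator, while for the converse one picks $t_n\downarrow 0$ along which $\frac{1}{t_n}(T_{t_n}x-x)$ is norm-bounded, uses reflexivity to extract a weakly convergent subsequence with limit $y$, and then pairs against $\phi\in\mathrm{dom}(A^*)$ to get $\langle y,\phi\rangle=\langle x,A^*\phi\rangle$, which identifies $x$ as an element of $\mathrm{dom}(A^{**})=\mathrm{dom}(A)$. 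None of the machinery in your proposal (modular theory, detailed balance, Gaussian subordination, Theorem \ref{DL2-11}) is relevant to this. You should either supply that semigroup-theoretic argument or, as the paper does, simply cite Davies.
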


\begin{proof}[Proof of Theorem \ref{quadder}] 
We have by hypothesis that $\nu(\alpha_t(a) b) = \nu(a \alpha^\flat_t(b))$, for all $a,b \in \mathcal{M}$. On setting $b = \alpha_t(c)$ where $c \in \mathcal{M}$ is arbitrary, it follows that $$ \nu(ac) = \nu(\alpha_t(ac)) = \nu(\alpha_t(a)\alpha_t(c)) = \nu(a \alpha^\flat_t \alpha_t (c)).$$So one has that $\alpha^\flat_t \alpha_t (c)=c$, which in turn ensures that $\delta^\flat = - \delta$. It then follows from standard semigroup theory that $\M(\delta,\delta^\flat)^{(2)}= \mathrm{dom}(\delta^2)$ is weak* dense. For any $a,b$ in the common domain of $\delta$ and $\delta^\flat$ we have that $\nu(a\delta(b))= \lim_{t\to 0}\frac{1}{t}\nu(a(\alpha_t(b)-b)) = \lim_{t\to 0}\frac{1}{t}\nu((\alpha_t^\flat(a)-a)b) = \nu(\delta^\flat(a)b)$. The derivations $\delta$ and $\delta^\flat$ are therefore in detailed balance II with respect to each other. 

The detailed balance assumption further ensures that $(\alpha_t)$ commutes with $\sigma_t^\nu$ and that $\nu\circ\alpha_t=\nu$ for all $t$. By these facts and the fact that each $\alpha_t$ is CP, we may extend $(\alpha_t)$ to a group of *-automorphisms $(\widetilde{\alpha}_t)$ on $\M\rtimes_\nu\mathbb{R}$ by means of the prescription $\widetilde{\alpha}_t(a\lambda_s)=\alpha_t(a)\lambda_s$, for which we also have that $\widetilde{\nu}\circ\widetilde{\alpha}_t=\widetilde{\nu}$ for all $t$ \cite[Theorem 4.1]{HJX}.  
In fact by \cite[Proposition 4.5]{LM}, we then further have that $\tau\circ\widetilde{\alpha}_t=\tau$ for all $t$. This in turn ensures that $(\widetilde{\alpha}_t)$ extends to a group of continuous *-automorphisms on $\widetilde{(\M\rtimes_\nu\mathbb{R})}$ \cite[Proposition 2.73]{GLnotes}. 

With $\mathcal{A}$ denoting the von Neumann subalgebra of $\M\rtimes_\nu\mathbb{R}$ generated by the $\lambda_t$'s, we know from \cite[Theorem 4.1]{HJX} that $\widetilde{\alpha}_t(fag)=f\alpha_t(a)g$ for all $f,g\in\mathcal{A}$, all $a\in\M$ and all $t$. The continuous extension of the $\widetilde{\alpha}_t$'s to the $\tau$-measurable operators, ensures that this equality also holds for the case where $f$ and $g$ are $\tau$-measurable operators affiliated to $\mathcal{A}$. Thus for each $0\leq c\leq 1$ each $\widetilde{\alpha}_t$ then maps $h^{c/2}\M h^{(1-c)/2}$ back into itself. The space $h^{c/2}\M h^{(1-c)/2}$ is known to be dense in $L^2(\M)$. We therefore arrive at the conclusion that $L^2(\M)$ is an invariant subspace of $\widetilde{\alpha}_t$, and that each of the prescriptions $h^{c/2}a h^{(1-c)/2}\to h^{c/2}\alpha_t(a) h^{(1-c)/2}$ continuously extends to the restriction of $\widetilde{\alpha}_t$ to $L^2(\M)$. Thus the restriction of the $\widetilde{\alpha}_t$'s to $L^2(\M)$ yields a weakly (and therefore strongly) continuous group on $L^2(\M)$ for which each member maps $h^{1/2}$ back onto itself. We define $\widetilde{\delta}$ to be the generator of this group. 

Given $a\in \mathrm{dom}(\delta)$, we of course have that $\frac{1}{t}(\alpha_t(a)-a)$ is weak* convergent to $\delta(a)$ as $t\to 0$. So for any $0\leq c\leq 1$, $\frac{1}{t}h^{c/2}(\alpha_t(a)-a) h^{(1-c)/2}=\frac{1}{t}[\widetilde{\alpha}_t(h^{c/2}a h^{(1-c)/2})-h^{c/2}a h^{(1-c)/2}]$ is weakly convergent to $h^{c/2}\delta(a)h^{(1-c)/2}$. This shows that $h^{c/2}\mathrm{dom}(\delta)h^{(1-c)/2}\subset \mathrm{dom}(\widetilde{\delta})$, with $\widetilde{\delta}(h^{c/2}a h^{(1-c)/2})=h^{c/2}\delta(a)h^{(1-c)/2}$ for each $a\in \mathrm{dom}(\delta)$ and each $0\leq c\leq 1$.

We may of course repeat the above argument for the automorphism group $\alpha^\flat_t$, denoting the generator of the 
restriction of $\widetilde{\alpha}^\flat_t$ to $L^2(\M)$ by $\widetilde{\delta}^\flat$. Observe that for any $a,b\in \M$ we 
have that $tr(\widetilde{\alpha}^\flat_t(h^{1/2}a)(bh^{1/2}))= tr((h^{1/2}\alpha^\flat_t(a))(bh^{1/2}))= \nu(\alpha^\flat_t(a)b) = \nu(a\alpha_t(b))= tr((h^{1/2}a)(\alpha_t(b)h^{1/2}))= 
tr((h^{1/2}a)\widetilde{\alpha}_t(bh^{1/2}))$. Since each of $h^{1/2}\M$ and $\M h^{1/2}$ is dense in $L^2(\M)$, we therefore 
have that $tr(\widetilde{\alpha}^\flat_t(f)g) = tr(f\widetilde{\alpha}_t(g))$ for all $f,g\in L^2(\M)$. Using the easily 
verifiable fact that the $\widetilde{\alpha}^\flat_t$'s preserve adjoints on $L^2(\M)$, it follows that the group 
$(\widetilde{\alpha}^\flat_t)$ is the ``dual'' group of $(\widetilde{\alpha}_t)$, and therefore also that 
$\widetilde{\delta}^\flat = \widetilde{\delta}^*$.

For each $0\leq c\leq 1$, $h^{c/2}ah^{(1-c)/2} \to h^{c/2}\delta(a)h^{(1-c)/2}$ is a densely defined 
restriction of $\widetilde{\delta}$. Let us denote this map by $\delta_{(c)}$ for now. The map $\delta_{(c)}^\flat$ 
is defined analogously. For any $a\in\mathrm{dom}(\delta)$ and $b\in\mathrm{dom}(\delta^\flat)$ we have that 
\begin{eqnarray*}
\langle h^{c/2}\delta(a)h^{(1-c)/2},h^{c/2}b h^{(1-c)/2}\rangle &=& 
tr((h^{c/2}bh^{(1-c)/2})^*\widetilde{\delta}(h^{c/2}ah^{(1-c)/2}))\\
&=& tr(\widetilde{\delta}^\flat((h^{c/2}bh^{(1-c)/2})^*)(h^{c/2}ah^{(1-c)/2}))\\
&=& tr((\widetilde{\delta}^\flat((h^{c/2}bh^{(1-c)/2}))^*(h^{c/2}ah^{(1-c)/2}))\\
&=& tr((h^{c/2}\delta^\flat(b)h^{(1-c)/2})^*(h^{c/2}ah^{(1-c)/2}))\\
&=& \langle h^{c/2}ah^{(1-c)/2},h^{c/2}\delta^\flat(b) h^{(1-c)/2}\rangle.
\end{eqnarray*}
This shows that $\delta_{(c)}^*$ is an extension of the densely defined operator $\delta_{(c)}^\flat$. Hence the $\delta_{(c)}$ is closable with minimal closure $\delta_{(c)}^{**}$, and adjoint the minimal closure of $\delta_{(c)}^\flat$. Write $\overline{\delta_{(c)}}$ for the minimal closure of $\delta_{(c)}$. Using the 
facts proved above, we can modify that to show that for any $a\in \mathrm{dom}(\delta^\flat\delta)$, the operator 
$\delta_{(c)}^\flat\delta_{(c)}$ is a densely defined symmetric operator which appears as the restriction of the self-adjoint 
operator $\delta_{(c)}^*\overline{\delta_{(c)}}$. But the facts we proved earlier about $\widetilde{\delta}$ and 
$\widetilde{\delta}^\flat$ enable us to show that on the one hand $\widetilde{\delta}$ is another possibly larger closure of 
$\delta_{(c)}$, and on the other that $\widetilde{\delta}^*\widetilde{\delta} = 
\widetilde{\delta}^\flat\widetilde{\delta}$ is also a self-adjoint extension of the symmetric operator $\delta_{(c)}^\flat\delta_{(c)}$. The fact that $\widetilde{\delta}$ is an extension of $\overline{\delta_{(c)}}$, ensures that
$\widetilde{\delta}^*\widetilde{\delta}$ is an extension of $\delta_{(c)}^\flat\delta_{(c)}$. By the maximal 
symmetry of self-adjoint operators, these self-adjoint operators must agree. The agreement of 
$\widetilde{\delta}^*\widetilde{\delta}$ and $\delta_{(c)}^*\overline{\delta_{(c)}}$ in turn suffices to show that the closure of 
$\delta_{(c)}$ is just $\widetilde{\delta}$. We have therefore shown that for each $0\leq c\leq 1$, 
$h^{c/2}\mathrm{dom}(\delta)h^{(1-c)/2}$ is a core for $\widetilde{\delta}$.

We proceed with showing that $\|\widetilde{\alpha}_t(|x|)-|x|\|\leq \|\widetilde{\alpha}_t(x)-x\|$ for all $x\in L^2_{sa}$ in 
which case the theorem will then follow from Lemma \ref{D1-39}. Since for any $t$ and any $x\in L^2_{sa}$ we have that 
$\|\widetilde{\alpha}_t(|x|)\|^2=tr(\widetilde{\alpha}_t(|x|)^2) = 
tr(\widetilde{\alpha}_t(x)^2)=\|\widetilde{\alpha}_t(x)\|^2$, the inequality will follow if we can show that we then also 
have that $tr(x\widetilde{\alpha}_t(x))\leq tr(|x|\widetilde{\alpha}_t(|x|)$ for all $t$.

Let $x\in L^2_{sa}$ be given. We know that there exist $x_+, x_-\in L^2_+$ such that $x=x_+ -x_-$ and $|x|=x_+ + x_-$, with 
$x_+ x_-=0$. In addition we also have that the support projections $s_+$ and $s_-$ of $x_+$ and $x_-$, are mutually orthogonal 
projections in $\M$. So in this case we may take the partial isometry $u$ in the polar form $x=u|x|$ of $x$, to simply be 
$u=s_+-s_-$. We will then clearly have that $u$ commutes with $x$, and that $u^*=u$. The operators $\widetilde{\alpha}_t(u)$ 
and $\widetilde{\alpha}_t(x)$ are of course similarly related. So for 
$f=\widetilde{\alpha}_t(u)\widetilde{\alpha}_t(|x|^{1/2})|x|^{1/2}u$ and $g=\widetilde{\alpha}_t(|x|^{1/2})|x|^{1/2}$ we have that 
\begin{eqnarray*}
tr(x\widetilde{\alpha}_t(x)) &=& tr(u|x|\widetilde{\alpha}_t(u)\widetilde{\alpha}_t(|x|))\\
&=& tr(u|x|^{1/2}\widetilde{\alpha}_t(|x|^{1/2})\widetilde{\alpha}_t(u)\widetilde{\alpha}_t(|x|^{1/2})|x|^{1/2})\\
&=& tr(f^*g)\\
&\leq& tr(f^*f)^{1/2}tr(g^*g)^{1/2}\\
&=& tr(u|x|^{1/2}\widetilde{\alpha}_t(|x|)|x|^{1/2}u)^{1/2}tr(|x|^{1/2}\widetilde{\alpha}_t(|x|)|x|^{1/2})^{1/2}\\
&\leq & tr(|x|^{1/2}\widetilde{\alpha}_t(|x|)|x|^{1/2})\\
&=& tr(|x|\widetilde{\alpha}_t(|x|)).
\end{eqnarray*}

It remains to prove the final claim. So let $f\in \mathcal{Z}(\M)_{sa}$ be given. Since for any $x\in L^2(\M)$ we have that 
$\|f\widetilde{\delta}(x)\|_2 = \|\,|f|\widetilde{\delta}(x)\|_2$, we may clearly assume that $f\geq 0$. Notice that we then 
further have that $\|(f+\tfrac{1}{n}\I)\widetilde{\delta}(|x|)\|_2 \leq \|(f+\tfrac{1}{n}\I)\widetilde{\delta}(x)\|_2$ for 
all $x$ and all $n$ if and only if $\|(f+\tfrac{1}{n}\I)\widetilde{\delta}(|x|)\|_2 \leq \|(f+\tfrac{1}{n}\I)\widetilde{\delta}(x)\|_2$ for all $x$. Hence we may assume $f$ to also be invertible. For any 
$x\in L^2(\M)$ the quadratic form $Q_f(x)=\|f\widetilde{\delta}(x)\|_2^2$ then satisfies 
$Q_f(|x|)=\|f\widetilde{\delta}(|x|)\|_2^2\leq \|f\|^2\|f\widetilde{\delta}(|x|)\|_2^2\leq \|f\|^2\|f\widetilde{\delta}(x)\|_2^2\leq \|f\|^2\|f^{-1}\|^2\|f\widetilde{\delta}(x)\|_2^2 = \|f\|^2\|f^{-1}\|^2Q_f(x)$. By 
Theorem \ref{DL2-11} we then also have that $Q_f(|x|)\leq Q_f(x)$ for all $x\in L^2(\M)$, or equivalently that 
$\|f\widetilde{\delta}(|x|)\|_2\leq \|f\widetilde{\delta}(x)\|_2$. This then proves the theorem.
\end{proof}

We next show that the class of derivations described by the preceding result, induce maps maps on $L^2(\M)$ which commute with $\Delta^{1/2}$.

\begin{proposition}\label{dermod}
Let $\delta$ and $\delta^\flat$ be as in the preceding theorem. Then both $\widetilde{\delta}$ and $\widetilde{\delta}^\flat$ commute with $\Delta^{1/2}$.
\end{proposition}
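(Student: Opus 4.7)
The plan is to first verify that the unitary group $(\widetilde{\alpha}_t)$ commutes with the modular group $(\Delta^{is})$, and then to transfer this commutation to the generators by standard spectral theory. As a preliminary, one notes that $\widetilde{\alpha}_t$ is unitary on $L^2(\M)$: for $a\in\M$, $\|\widetilde{\alpha}_t(ah^{1/2})\|_2^2 = \nu(\alpha_t(a^*a)) = \nu(a^*a) = \|ah^{1/2}\|_2^2$, using the $\nu$-invariance of $\alpha_t$ established in the proof of the preceding theorem. Consequently $\widetilde{\delta}$ is skew-adjoint (so $-i\widetilde{\delta}$ is self-adjoint), and likewise for $\widetilde{\delta}^\flat$.

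Next, the Haagerup modular operator acts on the dense subspace $\M h^{1/2}$ by $\Delta^{is}(ah^{1/2}) = \sigma_s^\nu(a)h^{1/2}$, since $h^{is}$ and $h^{1/2}$ commute in the functional calculus of $h$. The preceding theorem already established that DBII forces $\alpha_t \circ \sigma_s^\nu = \sigma_s^\nu \circ \alpha_t$. Combining these facts, for any $a \in \M$ and all $s,t \in \mathbb{R}$,
$$\widetilde{\alpha}_t\Delta^{is}(ah^{1/2}) = \alpha_t(\sigma_s^\nu(a))h^{1/2} = \sigma_s^\nu(\alpha_t(a))h^{1/2} = \Delta^{is}\widetilde{\alpha}_t(ah^{1/2}).$$
Since both compositions are unitary and $\M h^{1/2}$ is dense in $L^2(\M)$, the identity $\widetilde{\alpha}_t \Delta^{is} = \Delta^{is}\widetilde{\alpha}_t$ extends to all of $L^2(\M)$.

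The commutation of two strongly continuous unitary groups is equivalent to strong commutation of their self-adjoint generators in the spectral-theoretic sense, so every bounded Borel function of $\log\Delta$ commutes with $\widetilde{\alpha}_t$. In particular the spectral projections $E_{[1/n,n]}(\Delta)$ reduce $\widetilde{\alpha}_t$, and on each invariant subspace $E_{[1/n,n]}(\Delta)L^2(\M)$---where $\Delta^{1/2}$ is bounded---the generator of the restricted group (a piece of $\widetilde{\delta}$) commutes with the bounded $\Delta^{1/2}$. As $\bigcup_n E_{[1/n,n]}(\Delta)L^2(\M)$ is a core for $\widetilde{\delta}$, this yields the desired strong commutation of $\widetilde{\delta}$ with $\Delta^{1/2}$. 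The claim for $\widetilde{\delta}^\flat$ then follows by the analogous argument applied to $\widetilde{\alpha}_t^\flat$; alternatively, one may invoke the earlier observation $\delta^\flat = -\delta$ to conclude immediately.

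The main obstacle is largely cosmetic: pinning down a precise notion of ``commutes with $\Delta^{1/2}$'' for the closed unbounded operator $\widetilde{\delta}$. Once framed spectrally as strong commutation of self-adjoint operators---equivalently, commutation of the associated unitary groups---everything reduces cleanly to the commutation of $\alpha_t$ with $\sigma_s^\nu$ already guaranteed by DBII.
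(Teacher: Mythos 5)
Your argument is correct and shares the paper's skeleton: detailed balance forces the induced group $(\widetilde{\alpha}_t)$ to commute with the modular data, and the generator inherits the commutation. The execution differs at both stages. The paper simply cites the Majewski--Streater theory of DBII to get that each $\widetilde{\alpha}_t$ commutes with $\Delta^{1/2}$, and then transfers this to $\widetilde{\delta}$ by letting the difference quotients $\frac{1}{t}(\widetilde{\alpha}_t-\mathrm{id})$ act on $\{ah^{1/2}\colon a\in\mathrm{dom}(\delta)\}$ and passing to the closure; you instead verify the group-level commutation by hand from $\alpha_t\circ\sigma_s^\nu=\sigma_s^\nu\circ\alpha_t$ and $\Delta^{is}(ah^{1/2})=\sigma_s^\nu(a)h^{1/2}$, and then pass to the generator spectrally via the reducing projections $E_{[1/n,n]}(\Delta)$. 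What your route buys is self-containedness and a precise meaning of ``commutes with $\Delta^{1/2}$'' for the unbounded $\widetilde{\delta}$ (strong commutation), a point the paper leaves implicit; the paper's route is shorter because it leans on \cite{MS} and a limit-plus-closure argument. One small repair: $\bigcup_n E_{[1/n,n]}(\Delta)L^2(\M)$ is not contained in $\mathrm{dom}(\widetilde{\delta})$, so it cannot literally be a core; the correct statement is that $\bigcup_n E_{[1/n,n]}(\Delta)\,\mathrm{dom}(\widetilde{\delta})$ is a core, which follows since each $E_{[1/n,n]}(\Delta)$ commutes with the group and hence with $\widetilde{\delta}$, so that for $\xi\in\mathrm{dom}(\widetilde{\delta})$ one has $E_{[1/n,n]}(\Delta)\xi\to\xi$ and $\widetilde{\delta}E_{[1/n,n]}(\Delta)\xi=E_{[1/n,n]}(\Delta)\widetilde{\delta}\xi\to\widetilde{\delta}\xi$. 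With that adjustment the argument goes through, and your shortcut for $\widetilde{\delta}^\flat$ via $\delta^\flat=-\delta$ is consistent with the identity $\alpha_t^\flat\alpha_t=\mathrm{id}$ established in Theorem \ref{quadder}.
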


\begin{proof} By the known theory of DBII \cite{MS} the maps induced by $(\alpha_t)$ on $H_\nu$ commute with $\Delta^{1/2}$. Since $\delta=\lim_{t\to 0}\frac{1}{t}(\alpha_t-\mathrm{id})$, so does the map $ah^{1/2}\to \delta(a)h^{1/2}$ ($a\in \mathrm{dom}(\delta)$). Hence the closure of this map - $\widetilde{\delta}$ also does. The proof for the map $\widetilde{\delta}^\flat$ is completely analogous. 
\end{proof}

We close this section with a description of quantum Laplacians which do induce Markov dynamics. The preceding analysis shows that this is a reasonably substantial class of Laplacians. Specifically Laplacians emanating from say a representation of $\mathbb{R}^m$ as described in Remark \ref{Lapdim} all belong to this class.

\begin{proposition}\label{LapMar}
Let $\Gamma$ and $\Gamma^\flat$ be $n$-dimensional spaces of weak*-densely defined quadratic derivations respectively spanned by $\{\delta_1, \dots,\delta_n\}$ and $\{\delta_1^\flat, \dots,\delta_n^\flat\}$ where the pairs $(\delta, \delta^\flat)$ from this space are in detailed balance II with respect to each other. (Given a linear combination $\delta$ of the $\delta_i$'s, we here wrote $\delta^\flat$ for the corresponding linear combination of the $\delta_i^\flat$'s.)  Further let $D=[d_{ij}]$ be a real positive definite invertible matrix. If $-\mathcal{L}=[\sum_{1\leq i,j \leq 1} d_{ij} \widetilde{\delta}_i^*\widetilde{\delta}_j]$ is a reality-preserving self-adjoint positive definite operator, then $\mathcal{L}$ will generate a semigroup $(\widetilde{K}_t)$ of Markov operators on $L^2(\M)$.
\end{proposition}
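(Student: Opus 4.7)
The plan is to invoke Theorem \ref{DL2-11} to reduce the Markov property to a single inequality between quadratic forms. Since $-\mathcal{L}$ is self-adjoint, positive definite, and annihilates $h^{1/2}$, the corollary from the preceding subsection already provides a $C_0$-semigroup $(\widetilde{K}_t)$ of self-adjoint contractions on $L^2(\M)$ with $\widetilde{K}_t(h^{1/2})=h^{1/2}$. What remains is to establish positivity preservation: once it is shown that each $\widetilde{K}_t$ maps $L^2_+$ into itself, the hypothesis $0\leq x\leq h^{1/2}$ will force both $\widetilde{K}_t(x)\geq 0$ and $\widetilde{K}_t(h^{1/2}-x)=h^{1/2}-\widetilde{K}_t(x)\geq 0$, giving the full Markov property. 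By Theorem \ref{DL2-11}, positivity preservation is in turn equivalent to the form inequality $Q(|x|)\leq Q(x)$ for all $x\in L^2_{sa}$, where $Q(x)=\langle -\mathcal{L}x,x\rangle$.

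The central step will be a diagonalization of $Q$ using the matrix $D$. Since $D=[d_{ij}]$ is real and positive definite, factor $D=C^{T}C$ with $C=[c_{ki}]$ a real invertible matrix. Recalling that under the present DBII hypothesis $\widetilde{\delta}_i^{*}$ coincides with the $L^2$-closure of $\delta_i^\flat$, a direct expansion on the core $\eta(\M^{(2)}(\mathcal{F}_{DB}))$ used in Proposition \ref{Lclosed} gives
\begin{align*}
Q(x) = \sum_{i,j} d_{ij}\langle\widetilde{\delta}_j x,\widetilde{\delta}_i x\rangle = \sum_k\Big\|\sum_j c_{kj}\widetilde{\delta}_j(x)\Big\|_2^2 = \sum_k\|\widetilde{\delta}'_k(x)\|_2^2,
\end{align*}
where $\delta'_k=\sum_j c_{kj}\delta_j\in\Gamma$. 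Since by hypothesis every element of $\Gamma$ is a quadratic derivation, each $\widetilde{\delta}'_k$ satisfies $\|\widetilde{\delta}'_k(|x|)\|_2\leq\|\widetilde{\delta}'_k(x)\|_2$ for every self-adjoint $x$ in its domain; summing in $k$ then yields $Q(|x|)\leq Q(x)$ on the core.

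The main obstacle I anticipate is purely domain-theoretic. One needs to check, first, that $\eta(\M^{(2)}(\mathcal{F}_{DB}))$ is a form core of $-\mathcal{L}$, so that the inequality above propagates from the core to the whole form domain of $-\mathcal{L}$; and second, that $|x|$ actually lies in the form domain of $-\mathcal{L}$ whenever $x\in L^2_{sa}$ does, so that the right-hand side controls the left. The first point is essentially built into the construction, since $-\mathcal{L}$ is defined as the closure of $\sum_{i,j}d_{ij}\widetilde{\delta}_i^{*}\widetilde{\delta}_j$ on this domain and operator cores of positive self-adjoint operators are automatically form cores. The second point follows by applying the quadratic-derivation property to each $\widetilde{\delta}'_k$ individually to place $|x|$ inside the common domain $\bigcap_k\mathrm{dom}(\widetilde{\delta}'_k)$, which by the factorization $-\mathcal{L}=\sum_k(\widetilde{\delta}'_k)^{*}\widetilde{\delta}'_k$ realises the form domain of $-\mathcal{L}$. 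Once this bookkeeping is in hand, Theorem \ref{DL2-11} delivers positivity preservation, and with it, as noted at the outset, the Markov property of $(\widetilde{K}_t)$.
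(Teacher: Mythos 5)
Your proposal is correct and follows essentially the same route as the paper: factor the real positive definite matrix $D$ through a real square root, observe that the resulting linear combinations $\delta'_k=\sum_j c_{kj}\delta_j$ lie in $\Gamma$ and are therefore again quadratic derivations, deduce $Q(|x|)\leq Q(x)$, and conclude positivity preservation (hence the Markov property, since $\widetilde{K}_t(h^{1/2})=h^{1/2}$) from Theorem \ref{DL2-11}. The one point where the paper is more careful than your sketch is the identification $-\langle\mathcal{L}x,x\rangle=\sum_k\|\widetilde{\delta}'_k(x)\|_2^2$ on the full form domain, i.e.\ the agreement of the closed (strong) sum $-\mathcal{L}=[\sum_k(\widetilde{\delta}'_k)^*\widetilde{\delta}'_k]$ with the form sum, so that the form domain really is $\bigcap_k\mathrm{dom}(\widetilde{\delta}'_k)$; the paper settles this by appealing to \cite[Proposition 3.26(1)]{GLnotes} rather than by the form-core bookkeeping you outline, which as stated does not by itself rule out the form sum having a strictly larger domain.
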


Situations in which one obtains an $n$-dimensional space of derivations of the type described above, are typically situations 
in which there is a weak*-continuous action of the group $\mathbb{R}^n$ on the ambient von Neumann algebra which is in 
detailed balance II with another such action. As noted in Remark \ref{Lapdim} the derivations $\delta_i$ would in this case be 
the generators of the action of the coordinate axes on $\M$, with a linear combination of the $\delta_i$'s representing the 
scaled generator of the *-automorphism induced by the action of $\mathbb{R}^n$ along the line through origin in the direction 
of the corresponding linear combination of the unit vectors $e_i$ corresponding to the coordinate axes.

\begin{proof} We know from our earlier analysis that under the given hypothesis, $\mathcal{L}$ induces a semigroup $(\mathcal{K}_t)$ of contractive positive-definite operators. Since $\mathcal{L}(h^{1/2})=0$ we also have that $\mathcal{K}_t(h^{1/2})=h^{1/2}$ for each $t$. So to see that $(\mathcal{K}_t)$ is Markov, all we still need to show is that $(\mathcal{K}_t)$ preserves positivity. 

Consider the matrix $[d_{ij}]^{1/2}=C=[c_{ij}]$. Since $D$ is a positive definite matrix over the reals, the same is true of $C$. For each fixed $i$, set $d_i=[\sum_{k=1}^nc_{ik}\delta_k]$. By hypothesis 
each $d_i$ will be a quadratic derivation. Hence $\oplus_{1\leq i\leq n}d_i$ is a quadratic derivation on  $\oplus_{1\leq i\leq n}\M$. Checking reveals that each $d_i$ is in DBII with respect to 
$d_i^\flat = [\sum_{k=1}^nc_{ik}\delta_k^\flat]$, and that the conclusions of Theorem \ref{quadder} are applicable to this 
pair. 

We remind the reader that $L^2(\oplus_{1\leq i\leq n}\M)= \oplus_{1\leq i\leq n}L^2(\M)$ (see 
\cite[Remark II.30]{Terp}). If now we apply Theorem \ref{quadder} to $\oplus_{1\leq i\leq n}d_i$, we will for any 
$(x_i)\in \mathrm{dom}(\oplus_{1\leq i\leq n}\widetilde{d}_i)$ have that 
$$\sum_{1\leq i\leq n}\|\widetilde{d}_i(|x_i|)\|_2^2\leq \sum_{1\leq i\leq n}\|\widetilde{d}_i(x_i)\|_2^2.$$
In the case where $x=x_1=\dots=x_n$, the above inequality translates to the claim that 
$$\sum_{1\leq i\leq n}\|\widetilde{d}_i(|x|)\|_2^2\leq \sum_{1\leq i\leq n}\|\widetilde{d}_i(x)\|_2^2.$$

Since $C$ is a real positive-definite matrix we will have that $\widetilde{d}_i^*=[\sum_{k=1}^nc_{ik}\widetilde{\delta}^*_k]= [\sum_{k=1}^nc_{ki}\widetilde{\delta}^*_k]$ and hence that 
\begin{eqnarray*}
\left[\sum_{i=1}^n\widetilde{d}_i^*\widetilde{d}_i\right]&=& \left[\sum_{i=1}^n[\sum_{k=1}^nc_{ki}\widetilde{\delta}^*_k][\sum_{m=1}^nc_{im}\widetilde{\delta}_m]\right]\\
&\supseteq& \left[\sum_{1\leq k,m\leq n}\sum_{i=1}^n c_{ki}c_{im}\widetilde{\delta}^*_k\widetilde{\delta}_m\right]\\ 
&=& \left[\sum_{1\leq k,m\leq n} d_{km}\widetilde{\delta}^*_k\widetilde{\delta}_m\right]\\
&=& -\mathcal{L}.
\end{eqnarray*}
Then also $\left[\sum_{i=1}^n\widetilde{d}_i^*\widetilde{d}_i\right]= \left[\sum_{i=1}^n\widetilde{d}_i^*\widetilde{d}_i\right]^* \subseteq -\mathcal{L}^*=-\mathcal{L}$, showing that in fact $-\mathcal{L} = \left[\sum_{i=1}^n\widetilde{d}_i^*\widetilde{d}_i\right]$. 
We claim that we will for any $x\in L^2(\M)$ then have that 
$$-\langle\mathcal{L}x,x\rangle = \sum_{1\leq i\leq n}\|\widetilde{d}_i(x)\|_2^2.$$To see this notice that the quadratic form on the right corresponds to the so-called ``form-sum'' of the $\widetilde{d}_i^*\widetilde{d}_i$'s and the form on the left to the strong sum. However the specific properties of the $\widetilde{d}_i$'s, ensure that these two sums agree. (See \cite[Proposition 3.26(1)]{GLnotes} and the dicussion preceding it for details. For $Q_{\mathcal{L}}(x)= -\langle\mathcal{L}x,x\rangle$ it now follows that $Q_{\mathcal{L}}(|x|)\leq Q_{\mathcal{L}}(x)$. The fact that the semigroup $(\mathcal{K}_t)$ preserves positivity, then follows from Theorem \ref{DL2-11}.
\end{proof} 

\begin{remark} In the case where $\mathcal{L}=-[\sum_{1\leq k\leq n} \widetilde{\delta}^*_k\widetilde{\delta}_k]$ we don't need the derivations $\{\delta_1, \dots,\delta_n\}$ to span a space of quadratic derivations for the proof to go through. Merely assuming $\{\delta_1, \dots,\delta_n\}$ to be a set of quadratic derivations for which $[\sum_{1\leq i \leq 1}\widetilde{\delta}_i^*\widetilde{\delta}_i]$ is a densely defined self-adjoint positive definite operator will be enough.
\end{remark}

\section{Quantum Fokker-Planck dynamics}\label{FPD}

We consider options for formulating a quantum version of equation (2.4) of \cite{AMTU}, namely 
\begin{equation}\label{FP-comm} z_t=-\mathrm{div}(\mathbf{D}\nabla z)+V(x)z,\end{equation}
where $V$ is a potential and $\mathbf{D}$ a positive definite matrix. Notice that the function $A$ in equation (2.1) of \cite{AMTU} is a real-valued function from the Sobolev space given there. This in turn ensures that the function $V$ in the above formula is also real-valued. (See equation (2.5) in \cite{AMTU}.) This in turn ensures that $\cH_c= -\mathrm{div}(\mathbf{D}\nabla)+V$ is a self-adjoint operator for which we also have that $\sigma(\cH_c)\subset[0,\infty)$.

Our preceding analysis suggests $\mathcal{L}=-[\sum_{1\leq i,j\leq n} d_{ij}\widetilde{\delta}_i^*\widetilde{\delta}_j]$ as a quantum analog of $-\mathrm{div}\mathbf{D}\nabla$. With $V$ as in section \ref{PSP}, the following then seems to be a good substitute for equation (\ref{FP-comm}):
\begin{equation}\label{eq-FPD}
f_t=-[\sum_{1\leq i,j\leq n} d_{ij}\widetilde{\delta}_i^*\widetilde{\delta}_j](f)+\widetilde{V}(f)\mbox{ for all suitable} f.
\end{equation}
(In the above $f$ is a function from $[0,\infty)$ to $\M$).

This actually accords well with the classical and quantum theory. As far as the classical theory is concerned, careful reading of pp 50-51 of \cite{AMTU}, shows that the operator $\cH_c$ induced by $z\to -\mathrm{div}(\mathbf{D}\nabla)z+V(x)z$ has ground state $z_\infty=\sqrt{\rho_\infty}$ where $\rho_\infty\in L^1$ is the unique normalized steady state of the system. Now suppose that as was the case above, the operator $\widetilde{V}$ does indeed annihilate $h^{1/2}$. Then $h^{1/2}\in L^2$ will be a ground state of the operator $\cH_q =-[\sum_{1\leq i,j\leq n} d_{ij}\widetilde{\delta}_i^*\widetilde{\delta}_j]+\widetilde{V}$ in the sense that $-[\sum_{1\leq i,j\leq n} d_{ij}\widetilde{\delta}_i^*\widetilde{\delta}_j](h^{1/2})+\widetilde{V}(h^{1/2})=0$. Also $h$ (the square of this ground state $h^{1/2}$) will then correspond to the state $\nu$. So the properties of at least this part of $\cH_q= -[\sum_{1\leq i,j\leq n} d_{ij}\widetilde{\delta}_i^*\widetilde{\delta}_j]+\widetilde{V}$ will then closely match the structure we have in \cite{AMTU}. Having identified both suitable quantum potential and Laplacian terms, we are now able to present the main result of this paper:

\begin{theorem}[Quantum Fokker-Planck dynamics]\label{FPgen} Let $\mathcal{L}= -[\sum_{1\leq i,j\leq n} d_{ij}\widetilde{\delta}_i^*\widetilde{\delta}_j]$ be a reality-preserving self-adjoint operator which generates an $L^2$ Markov semigroup, and let the operator $T$ used to define $V$ satisfy DBII. Then the following holds:
\begin{enumerate}
\item Both $\cH_q=\mathcal{L}+\widetilde{V}$ and $\cH_q^*=\mathcal{L}+\widetilde{V}^\flat$ are generators of conservative $L^2(\M)$ Markov $C_0$-semigroups $(\mathcal{T}_t)$ and $(\mathcal{T}_t^\flat)$, which are adjoints of each other.
\item The semigroups $(\mathcal{T}_t)$ and $(\mathcal{T}_t^\flat)$ are induced by conservative integrable Markov semigroups $(T_t)$ and $(T_t^\flat)$ which are KMS-adjoints of each other, in the sense that $\mathcal{T}_t=T_t^{(2)}$ and $\mathcal{T}_t^\flat=(T_t^\flat)^{(2)}$ for each $t$.
\item If $\mathcal{L}$ commutes with the modular operator $\Delta^{1/2}$, the semigroups $(T_t)$ and $(T_t^\flat)$ are in detailed balance II with respect to each other. 
\end{enumerate}
\end{theorem}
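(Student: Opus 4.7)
The plan is to treat $\cH_q = \mathcal{L} + \widetilde{V}$ as a bounded perturbation of the generator $\mathcal{L}$, verify the Markov property at the $L^2$-level via a Trotter product argument, and then lift everything back to $\M$ using Lemma \ref{LpMarkov} together with Theorem \ref{markovdb2}.

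For (1), since $\widetilde{V}$ is bounded on $L^2(\M)$ by Theorem \ref{Vcommdelta}, standard bounded-perturbation theory for $C_0$-semigroups guarantees that $\cH_q$ generates a $C_0$-semigroup $(\mathcal{T}_t)$. A short computation using DBII gives
\[
\langle\widetilde{T}(ah^{1/2}),bh^{1/2}\rangle = \nu(b^*T(a)) = \nu(T^\flat(b^*)a) = \langle ah^{1/2},\widetilde{T^\flat}(bh^{1/2})\rangle,
\]
so $\widetilde{V}^* = \widetilde{V}^\flat$; combined with self-adjointness of $\mathcal{L}$, this yields $\cH_q^* = \mathcal{L} + \widetilde{V}^\flat$, and hence $(\mathcal{T}_t^\flat) = (\mathcal{T}_t^*)$. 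To verify the Markov property I would invoke the Trotter product formula
\[
\mathcal{T}_t = \lim_{n\to\infty}\bigl(e^{(t/n)\mathcal{L}}\,e^{(t/n)\widetilde{V}}\bigr)^n,
\]
valid in the strong operator topology for bounded perturbations of semigroup generators. Each $e^{s\mathcal{L}}$ is $L^2$-Markov by hypothesis, and each $e^{s\widetilde{V}} = e^{-s}\sum_{k\geq 0}(s^k/k!)\widetilde{T}^k$ is $L^2$-Markov because $\widetilde{T}$ preserves the natural cone (Theorem \ref{Vcommdelta}) and fixes $h^{1/2}$. Every finite product is therefore $L^2$-Markov, and since natural-cone preservation and the property of fixing $h^{1/2}$ are both preserved under strong limits, so is $\mathcal{T}_t$. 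Applying the same argument with $T^\flat$ in place of $T$ (noting that $(T^\flat)^\flat = T$ by faithfulness of $\nu$) handles $(\mathcal{T}_t^\flat)$. Conservativity $\mathcal{T}_t(h^{1/2}) = h^{1/2}$ is immediate because both $\mathcal{L}$ and $\widetilde{V}$ annihilate $h^{1/2}$.

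For (2), each $\mathcal{T}_t$ is $L^2$-Markov with $L^2$-Markov adjoint, so Lemma \ref{LpMarkov} supplies a Markov map $T_t\colon\M\to\M$ with $\mathcal{T}_t = T_t^{(2)}$. The uniqueness furnished by Lemma \ref{RNLp} forces the semigroup identity $T_sT_t = T_{s+t}$, and the weak*-continuity theorem preceding Theorem \ref{markovdb2} upgrades $(T_t)$ to a weak* Markov semigroup on $\M$. Integrability of each $T_t$ on $L^1(\M)$ follows by applying the $L^1$-extension lemma sitting between Lemma \ref{intmar} and that weak*-continuity theorem, once to $\mathcal{T}_t$ and once to $\mathcal{T}_t^*$. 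Conservativity $T_t(\I) = \I$ is just $\mathcal{T}_t(h^{1/2}) = h^{1/2}$ translated via the correspondence $\I\leftrightarrow h^{1/2}$. The analogous construction yields $(T_t^\flat)$, and the KMS-adjoint relation between $(T_t)$ and $(T_t^\flat)$ is precisely the first conclusion of Theorem \ref{markovdb2}, whose hypothesis was established in (1). For (3), functional calculus shows that commutation of $\mathcal{L}$ with $\Delta^{1/2}$ forces commutation with $\Delta^{1/4}=(\Delta^{1/2})^{1/2}$; since Theorem \ref{Vcommdelta} ensures $\widetilde{V}$ already commutes with $\Delta^{1/4}$, so does $\cH_q$, and hence each $\mathcal{T}_t$. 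The second conclusion of Theorem \ref{markovdb2} then immediately delivers DBII between $(T_t)$ and $(T_t^\flat)$.

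The main obstacle I foresee is genuinely verifying that the Trotter product is Markov with respect to the \emph{natural cone} and not merely positive on the dense subcone $\eta(\M_+)$. The argument above depends crucially on Theorem \ref{Vcommdelta}'s assertion that $\widetilde{T}$ preserves the entire natural cone (itself a consequence of $[\widetilde{T},\Delta^{1/4}]=0$ together with the natural-cone characterisation in \cite[Proposition 2.5.26]{BR}); without this, the strong-limit closure step would collapse, since $\eta(\M_+)$ is not closed in $L^2(\M)$. A secondary technicality is the invocation of the Trotter formula itself, which in the generality we need (unbounded self-adjoint generator plus bounded perturbation) is standard but must be cited carefully.
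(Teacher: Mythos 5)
Your proposal is correct and follows essentially the same route as the paper: generation of $(\mathcal{T}_t)$ and $(\mathcal{T}_t^\flat)$ by a standard perturbation argument (the paper uses dissipativity and Lumer--Phillips, you use bounded perturbation, which is equivalent here), Markovianity obtained by combining the Markov semigroups generated by $\mathcal{L}$ and $\widetilde{V}$ through a product formula, then lifting to $\M$ via the Section \ref{markov} machinery and Theorem \ref{markovdb2} for the KMS-adjoint and DBII statements. The only difference is cosmetic: the paper runs the product formula through the resolvents $(\mathrm{Id}-\tfrac{t}{n}\mathcal{L})^{-1}(\mathrm{Id}-\tfrac{t}{n}\widetilde{V})^{-1}$ (Trotter--Kato, \cite[Corollary 3.1.31]{BR}), each of which preserves order and fixes $h^{1/2}$, whereas you use the exponential Trotter product of the two constituent semigroups; both handle the cone-closure point you worried about in the same way, since the natural cone and the condition of fixing $h^{1/2}$ are closed under strong limits.
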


\begin{proof} Since each of $\mathcal{L}$, $\widetilde{V}$ and $\widetilde{V}^\flat$ generate contractive semigroups, each is 
dissipative. But since $\widetilde{V}$ and $\widetilde{V}^\flat=\widetilde{V}^*$ are both bounded and everywhere defined, 
this ensures that $\cH_q=\mathcal{L}+\widetilde{V}$ and $\cH_q^*=\mathcal{L}+\widetilde{V}^\flat$ are both dissipative. So by 
a well-known corollary of the Lumer-Philips theorem, they generate $C_0$-semigroups $(\mathcal{T}_t)$ and 
$(\mathcal{T}_t^\flat)$, which are adjoints of each other. Next observe that for each $\alpha>0$, the resolvents 
$\alpha(\alpha\mathrm{Id}-\mathcal{L})^{-1}$, $\alpha(\alpha\mathrm{Id}-\widetilde{V})^{-1})$ and 
$\alpha(\alpha\mathrm{Id}-\widetilde{V}^\flat)^{-1})$ each preserve order and map $h^{1/2}$ onto $h^{1/2}$. (This follows 
from the well known formulae $(\alpha\mathrm{Id}-\mathcal{L})^{-1} =\int_0^\infty e^{-\alpha t}\widetilde{K}_t\,dt$, etc.) 
Since the  Trotter-Kato theorem \cite[Corollary 3.1.31]{BR} ensures that $\mathcal{T}_t= \lim_{n\to\infty}[(\mathrm{Id}-\tfrac{t}{n}\mathcal{L})^{-1}(\mathrm{Id}-\tfrac{t}{n}\widetilde{V})^{-1}]^n$ and 
$\mathcal{T}_t^\flat = \lim_{n\to\infty}[(\mathrm{Id}-\tfrac{t}{n}\mathcal{L})^{-1}(\mathrm{Id}-\tfrac{t}{n}\widetilde{V}^\flat)^{-1}]^n$ for each 
$t>0$, it follows that each $\mathcal{T}_t$ and $\mathcal{T}_t^\flat$ also maps $h^{1/2}$ onto $h^{1/2}$ and preserves order . 
 
Claim (2) follows from the analysis in section \ref{markov}. As far as (3) is concerned recall that by Theorem 
\ref{Vcommdelta} $\widetilde{V}$ commutes with $\Delta^{1/2}$. But then each $(\lambda\mathrm{Id}-\widetilde{V})^{-1}$ ( 
$\lambda>0$) clearly also does. Similarly if $\mathcal{L}$ commutes with $\Delta^{1/2}$, then so does each 
$(\lambda\mathrm{id}-\mathcal{L})^{-1}$ ($\lambda>0$). We may then once again invoke the Trotter-Kato theorem 
\cite[Corollary 3.1.31]{BR} to conclude that each $\mathcal{T}_t$ also commutes with $\Delta^{1/2}$. The claim now follows 
from Theorem \ref{markovdb2}.
\end{proof}
 
\section{Asymptotic behaviour of the dynamics}\phantom{a}\label{asymp}

Here we pass to the special case where the operator $\widetilde{V}$ in $\cH_q= \mathcal{L}+\widetilde{V}$ is self-adjoint. This of course corresponds to the case where the Schwarz map $T$ which induces $V$ satisfies $T=T^\flat$. Notice that the fact that $|T(a)-a|^2\geq 0$ for any $a\in\M$,  ensures that $|T(a)|^2+|a|^2\geq a^*T(a)+T(a^*)a$ and hence that $2\mathrm{Re}(\nu(a^*T(a)))\leq \nu(|T(a)|^2+|a|^2)\leq \nu(T(a^*a)+a^*a)\leq 2\nu(a^*a)$. In terms of the action of $\widetilde{T}$ on $H_\nu$, this fact translates to the claim that $\mathrm{Re}(\langle \widetilde{T}(\eta(a)),\eta(a)\rangle) \leq \langle \eta(a),\eta(a)\rangle$. By continuity this inequality holds for all 
$\xi\in H_\nu$. This ensures that $-\mathrm{Re}(\langle \widetilde{V}(\xi),\xi\rangle)=\langle \xi,\xi\rangle - \mathrm{Re}(\langle \widetilde{T}(\xi),\xi\rangle)\geq 0$. Thus if $\widetilde{V}$ is self-adjoint, then $-\widetilde{V}$ is positive definite, with $-\cH_q$ then being the sum of two positive definite maps, one of which is bounded. Hence $-\cH_q$ is itself then positive definite. Given that this holds, we will use the spectral resolution $e_\lambda$ of the operator $-\cH_q= -\mathcal{L}-\widetilde{V}$ as the basis of our computations. Given the equation 
$$f_t=-\left[\sum_{1\leq i,j\leq n} d_{ij}\widetilde{\delta}_i^*\widetilde{\delta}_j\right](f)+\widetilde{V}(f)=\cH_q(f)$$
with initial condition $f(0)=f_0\in H_\nu$, the formal function $f(t)=e^{t\cH_q}f_0$ will be a solution if the formula makes 
sense, etc, etc. Using the spectral resolution of $-\cH_q$, the solution can formally be written as 
$$f(t)=\int_{\sigma(\cH_q)}e^{-t\lambda}\,de_\lambda(f_0)=(\I-s_{\cH_q})(f_0)+\int_{\sigma(\cH_q)\backslash\{0\}}e^{-t\lambda}\,de_\lambda(f_0)$$where 
$s_{\cH_q}$ is the support projection. Now if there is a ``gap'' in the spectrum of $-\cH_q$ in the sense that 
$\sigma(-\cH_q)\subset \{0\}\cup[\gamma, \infty)$ for some $\gamma>0$ then one should be able to use the fact that 
$\lambda\to e^{-t\lambda}$ is decreasing to show that 
$$\|\int_{\sigma(-\cH_q)\backslash\{0\}}e^{-t\lambda}\,de_\lambda(f_0)\|\leq e^{-t\gamma}\|\int_{\sigma(-\cH_q)\backslash\{0\}}\,de_\lambda(f_0)\|= e^{-t\gamma}\|s_{\cH_q}(f_0)\|.$$So in this case 
$f(t)\to(\I-s_{\cH_q})(f_0)$ as $t\to \infty$. However this conclusion should hold even when there is no gap. Since 
$\cH_q(h^{1/2})=0$ we know that $0\in \sigma(-\cH_q)$, and if there is no gap here, then surely $e_{[0,\gamma)}$ converges 
strongly to $e_0=(\I-s_{\cH_q})$ as $\gamma$ decreases to 0. So given $\epsilon>0$, we should be able to find a $\gamma>0$ so 
that $\|e_{[0,\gamma)}(f_0)-e_0(f_0)\|=\|e_{(0,\gamma)}(f_0)\|\leq \epsilon$. We can then argue as before to show that
$\|\int_{\sigma(-\cH_q)\cap[\gamma,\infty)}e^{-t\lambda}\,de_\lambda(f_0)\|\leq e^{-t\gamma}\|e_{[\gamma,\infty)}(f_0)\|$. 
Together these two estimates show that 
$\limsup_{t\to\infty}\|\int_{\sigma(-\cH_q)\backslash\{0\}}e^{-t\lambda}\,de_\lambda(f_0)\|\leq\epsilon$. Since $\epsilon>0$ 
was arbitrary, we have $\lim_{t\to \infty}\|\int_{\sigma(-\cH_q)\backslash\{0\}}e^{-t\lambda}\,de_\lambda(f_0)\|=0$ as 
required. So here too $f(t)\to (\I-s_{\cH_q})(f_0)$. So we get the following theorem:

\begin{theorem} Let the Laplacian $\mathcal{L}$ be the generator of an $L^2$ Markov semigroup, let $T$ satisfy DBII and let $\widetilde{V}$ be self-adjoint. For any $f_0\in H_\nu$, $T_t^{(2)}(f_0)$ will then be a solution of the equation 
$$f_t=-\left[\sum_{1\leq i,j\leq n} d_{ij}\widetilde{\delta}_i^*\widetilde{\delta}_j\right](f)+\widetilde{V}(f)=\cH_q(f)$$
with initial condition $f(0)=f_0\in H_\nu$, where $(T_t^{(2)})$ is as in the previous subsection. For this solution we have that $T_t^{(2)}(f_0)\to (\I-s_{\cH_q})(f_0)$ as $t\to \infty$.
\end{theorem}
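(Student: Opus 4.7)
The plan is to first verify that $T_t^{(2)}(f_0)$ really does solve the stated Cauchy problem, and then to use the spectral theorem applied to the self-adjoint operator $-\cH_q$ to pin down the asymptotic behaviour. For the first assertion, I would invoke Theorem \ref{FPgen}: under the present hypotheses $\cH_q = \mathcal{L} + \widetilde{V}$ generates an $L^2$-Markov $C_0$-semigroup $(\mathcal{T}_t) = (T_t^{(2)})$, and by standard $C_0$-semigroup theory $t \mapsto T_t^{(2)}(f_0)$ is a mild solution of $f_t = \cH_q(f)$, $f(0) = f_0$, which is a classical solution as soon as $f_0 \in \mathrm{dom}(\cH_q)$.

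The substantive input for the convergence claim is that $-\cH_q$ is a positive self-adjoint operator. Self-adjointness of $-\mathcal{L}$ is provided by Proposition \ref{Lclosed}; since $\widetilde{V}$ is bounded and assumed self-adjoint, the Kato-Rellich perturbation argument yields self-adjointness of $-\cH_q = -\mathcal{L} - \widetilde{V}$ on $\mathrm{dom}(\mathcal{L})$. Positivity of $-\mathcal{L}$ is built into its construction as $[\sum d_{ij}\widetilde{\delta}_i^*\widetilde{\delta}_j]$, and positivity of $-\widetilde{V}$ is exactly the Schwarz-inequality computation carried out at the start of Section \ref{asymp}. So $-\cH_q$ is positive definite and self-adjoint.

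With $\{e_\lambda\}$ denoting the spectral resolution of $-\cH_q$, the functional calculus now gives
$$T_t^{(2)}(f_0) \;=\; e^{t\cH_q}(f_0) \;=\; \int_{[0,\infty)} e^{-t\lambda}\,de_\lambda(f_0) \;=\; (\I - s_{\cH_q})(f_0) + \int_{(0,\infty)} e^{-t\lambda}\,de_\lambda(f_0),$$
because $(\I - s_{\cH_q}) = e_{\{0\}}$. The first summand is $t$-independent and already the desired limit, so the whole task reduces to showing that the second term tends to $0$ in norm.

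The main obstacle, and the step that requires care, is that $0$ need not be an isolated point of $\sigma(-\cH_q)$; without a spectral gap there is no uniform exponential rate. I would deal with this by a standard $\epsilon$-$\gamma$ argument: fix $\epsilon > 0$, and use strong convergence of spectral projections (i.e.\ $e_{(0,\gamma)}(f_0) \to 0$ as $\gamma \searrow 0$) to pick $\gamma > 0$ with $\|e_{(0,\gamma)}(f_0)\| < \epsilon$. Splitting the integral over $(0,\gamma)$ and $[\gamma,\infty)$ and bounding $e^{-t\lambda} \le 1$ on the first piece and $e^{-t\lambda} \le e^{-t\gamma}$ on the second yields
$$\Bigl\|\int_{(0,\infty)} e^{-t\lambda}\,de_\lambda(f_0)\Bigr\|^2 \;\le\; \|e_{(0,\gamma)}(f_0)\|^2 + e^{-2t\gamma}\,\|f_0\|^2 \;<\; \epsilon^2 + e^{-2t\gamma}\|f_0\|^2.$$
Taking $\limsup_{t\to\infty}$ gives a bound of $\epsilon$, and since $\epsilon$ was arbitrary the second summand tends to $0$, proving $T_t^{(2)}(f_0) \to (\I - s_{\cH_q})(f_0)$.
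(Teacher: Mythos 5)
Your proposal is correct and follows essentially the same route as the paper: establish that $-\cH_q$ is positive and self-adjoint (positivity of $-\widetilde{V}$ via the Schwarz-map computation, $-\mathcal{L}$ positive by construction, bounded perturbation of a self-adjoint operator), then decompose via the spectral resolution of $-\cH_q$ and handle the possible absence of a spectral gap by the $\epsilon$--$\gamma$ splitting of the integral over $(0,\gamma)$ and $[\gamma,\infty)$. Your write-up is if anything slightly more careful than the paper's (explicit mild-versus-classical solution remark and the Kato--Rellich step), but the underlying argument is the same.
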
 
 
In closing we explain how this compares to the context of \cite{AMTU}. From the remark at the bottom of page 51 of \cite{AMTU} it is clear that in that setting the kernel of $\cH_c$ is the one-dimensional space $\mathrm{span}(z_\infty)$. Part of what Arnold, et al, seem to do in equation (2.4) of \cite{AMTU} is to scale the initial condition so that there $(\I-s_{\cH_c})(z_0) = z_\infty$ where they have also arranged things so that $z_\infty=\sqrt{\rho_\infty}$. In the quantum setting $\rho_\infty$ corresponds to $h=\frac{d\widetilde{\nu}}{d\tau}$ and hence $z_\infty$ to $h^{1/2}$. We do have that $h^{1/2}\in\mathrm{ker}(\cH_q)$, but we do not know if the kernel of $\cH_q$ is one-dimensional. So we are not able to follow \cite{AMTU} and arrange matters so that $(\I-s_{\cH_q})(f_0)=h^{1/2}$.

\subsection{Convergence to equilibrium}\phantom{a}\label{imply}

In this subsection we again assume that $\widetilde{V}$ is self-adjoint, and also that $\mathcal{L}$ commutes with $\Delta^{1/2}$, and generates an $L^2$ Markov semigroup. Here we will prefer the Haagerup $L^2$-approach to $H_\nu$ where $\eta(a)$ is replaced with $ah^{1/2}$. Once again consider the equation $$f_t=-\mathcal{L}(f)+\widetilde{V}(f)=-\cH_q(f)$$
but this time start with the initial condition $f(0)=ah^{1/2}\in H_\nu$ where $a\in \M_+$ is selected so that $h^{1/2}ah^{1/2}$ is a state (a norm 1 element of $L^1_+$). This means that $1=\|h^{1/2}ah^{1/2}\|_1=tr(h^{1/2}ah^{1/2})=\nu(a)$. 

We know from the analysis of the previous subsection that $\cH_q$ generates a conservative $L^2$ Markov semigroup on $L^2$ which is of the form $(T_t^{(2)})$ for some conservative Markov semigroup $(T_t)$ on $\M$, and that we also have $T_t^{(2)}(ah^{1/2})=T_t(a)h^{1/2}$.
Now recall that in the above context $T^{(2)}_t(ah^{1/2})\to (\I-s_{\cH_q})(ah^{1/2})$. 
Since $h^{1/2}T_t^{(2)}(ah^{1/2})=h^{1/2}T_t(a)h^{1/2}$ for each $t$, and since  
$T_t$'s preserve positivity, we have that $h^{1/2}T_t^{(2)}(ah^{1/2})\in L^1_+(\M)$ for each $t$. Recall that each $T_t$ satisfies DBII. We therefore have that  
$\|h^{1/2}T^{(2)}_t(ah^{1/2})\|_1=\|h^{1/2}T_t(a)h^{1/2}\|_1=tr(h^{1/2}T_t(a)h^{1/2})=\nu(T_t(a))=\nu(a)=1$. So each 
$h^{1/2}T^{(2)}_t(ah^{1/2})$ is a norm 1 state (norm 1 element of $L^1_+)$. Moreover 
by H\"older's inequality, the net $\{h^{1/2}T_t^{(2)}(ah^{1/2})\}$ must converge in $L^1$-norm to 
$h^{1/2}(\I-s_{\cH_q})(ah^{1/2})$, which must then also be a norm 1 element of $L^1_+$ since the cone $L^1_+$ is norm-closed in $L^1$. 
Although we shall not need this just yet, we pause to show that $h^{1/2}(\I-s_{\cH_q})(ah^{1/2})$ is in fact of the form 
$h^{1/2}a_0h^{1/2}$ for some positive $a_0\in \M$, where $a_0h^{1/2}$ is in the kernel of $\mathcal{H}_q$. To see this notice that we will for each $t$ have that 
$h^{1/2}T_t(a)h^{1/2}\leq \|T_t(a)\|_\infty h=\|a\|h$. So in the limit we will have that 
$0\leq h^{1/2}(\I-s_{\cH_q})(ah^{1/2})\leq \|a\|h$. Thus by Lemma \ref{RNLp}, there must exist some $a_0\in \M_+$ such 
that $h^{1/2}(\I-s_{\cH_q})(ah^{1/2})=h^{1/2}a_0h^{1/2}$, or equivalently that $(\I-s_{\cH_q})(ah^{1/2})= a_0h^{1/2}$. The above convergence in $L^2(\M)$ may now be reformulated as the claim that 
$T_t^{(2)}(ah^{1/2})\to a_0h^{1/2}$ as $t\to\infty$.

The above discussion now yields the following very nice corollary.

\begin{corollary}\label{conveq} Let $\widetilde{V}$ be self-adjoint, and let $\mathcal{L}$ commute with $\Delta^{1/2}$ and generate an $L^2$ Markov semigroup. Let $(T_t)$ be the integrable conservative Markov semigroup induced by 
$\mathcal{H}_q$ on $\M$. For any positive norm 1 element $f$ of $L^1(\M)$, each $T_t^{(1)}(f)$ will also be a positive norm 1 element, with $\{T_t^{(1)}(f)\}$ converging to some positive norm 1 element of $L^1$ as $t\to \infty$.
\end{corollary}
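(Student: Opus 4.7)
The plan is to bootstrap from the $L^1$-convergence already demonstrated in the paragraph preceding the corollary --- which treats states of the special form $f = h^{1/2}ah^{1/2}$ with $a \in \M_+$ and $\nu(a) = 1$ --- up to arbitrary positive norm-one elements $f \in L^1(\M)$. There are three items to check for a general such $f$: that $T_t^{(1)}(f)$ is positive, that $\|T_t^{(1)}(f)\|_1 = 1$, and that $(T_t^{(1)}(f))_{t\geq 0}$ has an $L^1$-limit as $t \to \infty$.

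For positivity-preservation and norm-preservation I would invoke the duality identified in the second lemma of Section \ref{markov}. Applied to the KMS-pair $(T_t, T_t^\flat)$ produced by Theorem \ref{FPgen}, that lemma identifies $T_t^\flat$ as the Banach dual of $T_t^{(1)}$, so $tr(T_t^{(1)}(f)\,a) = tr(f\,T_t^\flat(a))$ for all $f \in L^1(\M)$ and $a \in \M$. If $f \in L^1_+$ and $a \in \M_+$, positivity of $T_t^\flat$ makes the right-hand side nonnegative, so $T_t^{(1)}(f) \in L^1_+$ by the usual dual-cone characterization of the positive cone. Unitality of $T_t^\flat$ then yields $\|T_t^{(1)}(f)\|_1 = tr(T_t^{(1)}(f)) = tr(f\,T_t^\flat(\I)) = tr(f) = 1$.

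For convergence I would first upgrade the $L^2$-convergence $T_t^{(2)}(\xi) \to (\I - s_{\cH_q})(\xi)$, supplied for all $\xi \in H_\nu$ by the theorem immediately preceding the corollary, to an $L^1$-convergence on the dense subspace $h^{1/2}\M h^{1/2}$ of $L^1(\M)$. Choosing $\xi = ah^{1/2}$ for arbitrary $a \in \M$ and multiplying on the left by $h^{1/2}$, the H\"older estimate $\|h^{1/2}g\|_1 \leq \|h^{1/2}\|_2\|g\|_2 = \|g\|_2$ (using $\|h\|_1 = 1$) produces norm-convergence of $T_t^{(1)}(h^{1/2}ah^{1/2}) = h^{1/2}T_t(a)h^{1/2}$ in $L^1$. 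A standard $\varepsilon/3$ Cauchy argument, exploiting $L^1$-contractivity of each $T_t^{(1)}$, then promotes convergence to every $f \in L^1(\M)$. When $f$ is positive of norm one, the limit $g$ lies in $L^1_+$ (this cone is norm-closed) and satisfies $\|g\|_1 = 1$ (continuity of the norm), completing the argument.

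No serious obstacle arises: the corollary is essentially a clean repackaging of already-established machinery (duality for positivity, duality-plus-unitality for the norm identity, and H\"older-plus-density for the convergence). The one mild subtlety worth flagging is that the general convergence is obtained via an $\varepsilon/3$ Cauchy argument rather than by identifying the limit explicitly, so outside the special case $f = h^{1/2}ah^{1/2}$ we only know that the limit exists in $L^1_+$ and has norm one, without a concrete formula for it.
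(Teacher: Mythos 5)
Your proof is correct and follows essentially the same route as the paper's: approximate a general positive norm-one $f$ by states of the form $h^{1/2}ah^{1/2}$, invoke the convergence already established for those in the discussion preceding the corollary, and run an $\varepsilon/3$ Cauchy argument using the $L^1$-contractivity of the operators $T_t^{(1)}$. The only (harmless) divergence is that you obtain positivity and norm-preservation of $T_t^{(1)}(f)$ from the duality $tr(T_t^{(1)}(f)b)=tr(f\,T_t^\flat(b))$ with the unital positive map $T_t^\flat$, whereas the paper deduces both from the fact that each $T_t^{(1)}(f)$ is within $\varepsilon$ of a state for every $\varepsilon>0$ and the set of states is norm-closed in $L^1(\M)$.
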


\begin{proof} Given $\epsilon>0$, select an element $a\in\M_+$ so that $h^{1/2}ah^{1/2}$ is a state and $\|h^{1/2}ah^{1/2} - f\|\leq \frac{\epsilon}{3}$. We know that $\{T^{(1)}_t(h^{1/2}ah^{1/2})\}$ converges to some state $h^{1/2}a_0h^{1/2}$ as $t\to\infty$. So $\{T^{(1)}_t(h^{1/2}ah^{1/2})\}$ is Cauchy, and hence we may find some $t_0$ such that $\|T^{(1)}_t(h^{1/2}ah^{1/2})-T^{(1)}_s(h^{1/2}ah^{1/2})\|_1\leq \frac{\epsilon}{3}$ whenever $t,s\geq t_0$. Now recall that the fact that $\nu\circ T_t=\nu$ for each $t$, ensures that each $T_t^{(1)}$ is a norm 1 operator on $L^1$ \cite[Theorem 5.1]{HJX}. Thus for all $s,t\geq t_0$ we have that 
\begin{eqnarray*}
&&\|T_t^{(1)}(f)-T_s^{(1)}(f)\|\\
&&\leq \|T_t^{(1)}(f)-T^{(1)}_t(h^{1/2}ah^{1/2})\|+\|T^{(1)}_t(h^{1/2}ah^{1/2})-T^{(1)}_s(h^{1/2}ah^{1/2})\|\\
&&\qquad+\|T^{(1)}_s(h^{1/2}ah^{1/2})-T_s^{(1)}(f)\|\\
&&\leq \|f-h^{1/2}ah^{1/2}\|+\|T^{(1)}_t(h^{1/2}ah^{1/2})-T^{(1)}_s(h^{1/2}ah^{1/2})\|+\|h^{1/2}ah^{1/2}-f\|\\
&&\leq \epsilon.
\end{eqnarray*}
The net $\{T^{(1)}_t(f)\}$ is therefore Cauchy, and must therefore converge to some $f_0\in L^1$. But for each $t$, $T^{(1)}_t(h^{1/2}ah^{1/2})=h^{1/2}T_t(a)h^{1/2}$ was shown to be a state in the preceding discussion. Since $\|T_t^{(1)}(f)-T^{(1)}_t(h^{1/2}ah^{1/2})\|\leq \|f-h^{1/2}ah^{1/2}\|<\epsilon$ and since $\epsilon$ was arbitrary, each $T^{(1)}_t(f)$ is therefore also a state, with limit $f_0$ then necessarily also being a state.
\end{proof}

\section{A quantum Csiszar-Kullback inequality}\label{qcki}

Corollary \ref{conveq} shows that for a specific class of generators we get a very nice theory with clear convergence to equilibrium. We now pass to the challenge of demonstrating convergence to equilibrium for a more general class of generators. To achieve this objective we shall need some additional technology in the form of a quantum Csiszar-Kullback inequality. We shall prove and refine the following result.

\begin{proposition}\label{qcs}
Let $f,g$ be positive norm 1 elements of $L^1(\M)$ and suppose that $g$ is non-singular. (So the support of $g$ is $\I$.) Then 
$\|f-g\|_1\leq 2 tr(g|(g^{-1/2}fg^{-1/2})\log(g^{-1/2}fg^{-1/2})|)$. (Here we are assuming that the action of $tr$ extends to the collection of positive operators $b$ affiliated to $\M\rtimes_\nu\mathbb{R}$ for which $\theta_s(b)=e^{-s}b$ for each $s$.) 
\end{proposition}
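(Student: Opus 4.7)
The plan is to adapt the classical argument for the Csiszár--Kullback inequality $\|f-g\|_1 \leq 2\int f|\log(f/g)|$ to the noncommutative setting. In the commutative case the proof splits into: (i) using $\int(f-g)=0$ to replace $\|f-g\|_1$ with $2\int_{A}(f-g)$ where $A=\{f\geq g\}$; (ii) applying Klein's inequality $f\log(f/g)\geq f-g$ on $A$; and (iii) discarding the (nonnegative) contribution on $A^{c}$. I would implement the same three steps operator-theoretically.

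First I introduce $h=g^{-1/2}fg^{-1/2}$, which is positive (and affiliated to the crossed product in the way the footnote sanctions), so that $f=g^{1/2}hg^{1/2}$ and $\mathrm{tr}(gh)=\mathrm{tr}(f)=1=\mathrm{tr}(g)$. Since $\mathrm{tr}(f-g)=0$, standard Jordan decomposition in $L^{1}$ gives
\[
\|f-g\|_{1}=2\,\mathrm{tr}((f-g)_{+})=2\,\mathrm{tr}((f-g)q),
\]
where $q\in\M$ is the spectral projection of $f-g$ on $(0,\infty)$. Set $r=g^{1/2}qg^{1/2}$; then $0\leq r\leq g$, and by trace cyclicity $\mathrm{tr}(gq)=\mathrm{tr}(r)$ and $\mathrm{tr}(fq)=\mathrm{tr}(g^{1/2}hg^{1/2}q)=\mathrm{tr}(hr)$. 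Hence
\[
\|f-g\|_{1}=2\,\mathrm{tr}((h-1)r).
\]

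Next I use the operator form of Klein's inequality: the scalar inequality $x\log x\geq x-1$ for $x\geq 0$ gives, via functional calculus, $h\log h\geq h-1$. Since $r\geq 0$, this yields $\mathrm{tr}((h\log h)r)\geq \mathrm{tr}((h-1)r)$, so
\[
\tfrac{1}{2}\|f-g\|_{1}\leq \mathrm{tr}((h\log h)\,r).
\]
To pass from $r$ to $g$ I split $h\log h$ through the spectral projection $P=\chi_{[1,\infty)}(h)$, which commutes with $h$: write $(h\log h)_{+}=(h\log h)P\geq 0$ and $(h\log h)_{-}=-(h\log h)(I-P)\geq 0$, so $|h\log h|=(h\log h)_{+}+(h\log h)_{-}$. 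Because $0\leq r\leq g$ and $(h\log h)_{+}\geq 0$, the trace monotonicity $\mathrm{tr}(A r)\leq \mathrm{tr}(A g)$ for $A\geq 0$ gives $\mathrm{tr}((h\log h)_{+}r)\leq \mathrm{tr}((h\log h)_{+}g)$, while $\mathrm{tr}((h\log h)_{-}r)\geq 0\geq -\mathrm{tr}((h\log h)_{-}g)$ lets me discard the negative-part contribution. Adding,
\[
\mathrm{tr}((h\log h)r)\leq \mathrm{tr}((h\log h)_{+}g)+\mathrm{tr}((h\log h)_{-}g)=\mathrm{tr}(|h\log h|\,g),
\]
which equals $\mathrm{tr}(g\,|h\log h|)$ by the trace property, yielding the claim.

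The main obstacle is not the algebra of the argument but the operator-theoretic bookkeeping: $h$ and $h\log h$ are in general only affiliated to $\M\rtimes_{\nu}\mathbb{R}$ (not bounded), so each trace manipulation (cyclicity giving $\mathrm{tr}(fq)=\mathrm{tr}(hr)$, the monotonicity $\mathrm{tr}(Ar)\leq \mathrm{tr}(Ag)$ for possibly unbounded positive $A$, and the extension of the trace to the class specified in the statement) must be justified via the $\theta_{s}$-homogeneity and the convergence properties of spectral truncations $P_{n}=\chi_{[1/n,n]}(h)$. Passing through such truncations and then taking $n\to\infty$ monotonically (both for $(h\log h)_{+}$ and for $(h\log h)_{-}$) is the only genuinely delicate step; the rest is the commutative argument transcribed.
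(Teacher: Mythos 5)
Your argument is correct and is essentially the paper's own proof: both reduce $\|f-g\|_1$ to $2\,tr((f-g)e_0)$ with $e_0=\chi_{(0,\infty)}(f-g)$ using $tr(f)=tr(g)=1$, apply the scalar inequality $s-1\leq s\log s$ through the sandwich $g^{1/2}e_0g^{1/2}$, and then pass from $g^{1/2}e_0g^{1/2}\leq g$ to $g$ after replacing $(g^{-1/2}fg^{-1/2})\log(g^{-1/2}fg^{-1/2})$ by its absolute value. Your positive/negative-part bookkeeping is just a rephrasing of the paper's step $A\leq|A|$ followed by monotonicity, and your remarks on the unbounded-operator and trace-extension issues match the level of care taken in the paper.
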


We note that a similar inequality holds for the Araki relative entropy. See Theorem 3.1 of \cite{HiOhTs}. However we shall ultimately need a concept of relative entropy which is sensitive to $\sigma$-strong* convergence. (See Lemma \ref{sstrong*ent}.) At this stage it is not clear to us if Araki relative entropy is sufficiently responsive to $\sigma$-strong* convergence, and for this reason we shall therefore prefer a concept of relative entropy based on the above result. 

\begin{definition} The type of relative entropy we will need to investigate here will be the one defined as follows: Let $f,g$ be positive norm 1 elements of $L^1(\M)$ and suppose that $g$ is non-singular. We define the relative entropy $S(f|g)$ to be $S(f|g)=tr(g|(g^{-1/2}fg^{-1/2})\log(g^{-1/2}fg^{-1/2})|)$.
\end{definition}

\textbf{Note:} With $f$ and $g$ as above we have that $\theta_s(f)=e^{-s}f$ and $\theta_s(g)=e^{-s}g$ for all $s$, and hence that $\theta_s(g^{-1/2}fg^{-1/2})=(g^{-1/2}fg^{-1/2})$ for all $s$. The operator $(g^{-1/2}fg^{-1/2})$ may only be affiliated to $\cM$ and not actually in $\tcM$. So for this reason this equality is not enough to ensure that $(g^{-1/2}fg^{-1/2})$ actually belongs to $L^\infty(\M)\equiv\M$, but it is enough to ensure that it is ``affiliated'' to $L^\infty(\M)\equiv\M$. So for any real-valued Borel function $F$, $F(g^{-1/2}fg^{-1/2})$ will still be affiliated to $L^\infty(\M)\equiv\M$. So for any $a\in L^1(\M)$, we have a shot at making sense of $tr(aF(g^{-1/2}fg^{-1/2}))$.

\begin{proof}[Proof of Proposition \ref{qcs}]
Let $f,g$ be as in the hypothesis. For any $s\in \mathbb{R}$ we have that $\theta_s(\chi_{(0,\infty)}(f-g)) = \chi_{(0,\infty)}(\theta_s(f-g))=\chi_{(0,\infty)}(e^{-s}(f-g))=\chi_{(0,\infty)}(f-g)$. Thus $\chi_{(0,\infty)}(f-g)\in \M$. Similarly $\chi_{(-\infty,0]}(f-g)\in \M$. 

If $f\geq g$, then $\|f-g\|_1=tr(|f-g|)=tr(f-g)=tr(f)-tr(g)=1-1=0$. So we can only have $f\geq g$ if $f=g$. The same claim clearly holds 
if $f\leq g$. In either of these cases the required inequality clearly holds. We pass to the case where $f\neq g$. Note that $|f-g|=(f-g)\chi_{(0,\infty)}(f-g)-(f-g)\chi_{(-\infty,0]}(f-g)$. Observe that $\|f-g\|_1=tr(|f-g|)= tr((f-g)\chi_{(0,\infty)}(f-g))+tr((g-f)\chi_{(-\infty,0]}(f-g))$ with 
\begin{eqnarray*}
tr((g-f)\chi_{(-\infty,0]}(f-g))&=&tr(g\chi_{(-\infty,0]}(f-g)) - tr(f\chi_{(-\infty,0]}(f-g))\\
&=&tr(g-g\chi_{(0,\infty)}(f-g)) - tr(f-f\chi_{(0,\infty)}(f-g))\\
&=&[1-tr(g\chi_{(0,\infty)}(f-g))] - [1-tr(f\chi_{(0,\infty)}(f-g))]\\
&=&tr((f-g)\chi_{(0,\infty)}(f-g)).
\end{eqnarray*}
It therefore follows that 
$\|f-g\|_1=2tr((f-g)e_0)$ where $e_0=\chi_{(0,\infty)}(f-g)$. Notice that $s-1\leq s\log(s)$ for all $s\geq 0$. We may now use this fact with the non-singularity of $g$ to see that
\begin{eqnarray*}
\|f-g\|_1&=&2tr((f-g)e_0)\\
&=&2tr((g^{-1/2}fg^{-1/2}-\I)(g^{1/2}e_0g^{1/2}))\\
&\leq& 2tr((g^{-1/2}fg^{-1/2})\log(g^{-1/2}fg^{-1/2})(g^{1/2}e_0g^{1/2}))\\
&\leq& 2tr(|(g^{-1/2}fg^{-1/2})\log(g^{-1/2}fg^{-1/2})|(g^{1/2}e_0g^{1/2}))\\
&\leq& 2tr(|(g^{-1/2}fg^{-1/2})\log(g^{-1/2}fg^{-1/2})|g).
\end{eqnarray*}
\end{proof}

In the special case where the elements $f,g\in L^2(\M)$ are of the form $f= h^{1/2}ah^{1/2}$ and $g= h^{1/2}a_0h^{1/2}$ the above proposition can be refined. To do this we start by recalling that $h^{1/2p}\M h^{1/2p}$ is dense in $L^p(\M)$ \cite[Proposition 7.52]{GLnotes}, and then use this fact to modify the earlier definition of relative entropy. 

\begin{definition} Suppose we have two positive elements $a,a_0\in \M$ where here $a_0\in\M$ is selected so that 
$a_0\geq \epsilon\I$ for some $\epsilon>0$. In this setting we may then define a slightly modified relative entropy by the prescription $$\widetilde{S}(h^{1/2}ah^{1/2}|h^{1/2}a_0h^{1/2})= \nu(a_0^{1/2}|(a_0^{-1/2}aa_0^{-1/2})\log(a_0^{-1/2}aa_0^{-1/2})|a_0^{1/2}).$$
\end{definition}

In the above setting we obtain the following modified version of the Csiszar-Kullback inequality:

\begin{proposition}\label{qcs2}
Let $h^{1/2}ah^{1/2}$ and $h^{1/2}a_0h^{1/2}$ be two positive norm 1 elements of $L^1(\M)$ for which $a_0\geq \epsilon 
\I$. Then $\|h^{1/2}ah^{1/2}-h^{1/2}a_0h^{1/2}\|_1\leq 2\widetilde{S}(h^{1/2}ah^{1/2}|h^{1/2}a_0h^{1/2})$.
\end{proposition}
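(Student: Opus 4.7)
The plan is to adapt the splitting argument from the proof of Proposition \ref{qcs} to the specific form $f=h^{1/2}ah^{1/2}$, $g=h^{1/2}a_0h^{1/2}$. Since $a_0\geq \epsilon\I$ ensures that $g$ is non-singular, the opening lines of that earlier proof apply unchanged and give $\|f-g\|_1 = 2\,tr((f-g)e_0)$ with $e_0=\chi_{(0,\infty)}(f-g)\in\M$. The whole challenge is then to re-express the right-hand side in terms of the bounded positive operator $X=a_0^{-1/2}aa_0^{-1/2}\in\M_+$, which is well defined precisely because $a_0\geq\epsilon\I$.

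The first key step is the algebraic identity $a-a_0 = a_0^{1/2}(X-\I)a_0^{1/2}$, which yields $f-g = h^{1/2}a_0^{1/2}(X-\I)a_0^{1/2}h^{1/2}$. Trace cyclicity, together with the fact that we can slide around factors that are either bounded or lie in the appropriate Haagerup spaces while keeping the product in $L^1(\M)$, then gives $tr((f-g)e_0) = tr((X-\I)c)$, where $c := a_0^{1/2}h^{1/2}e_0h^{1/2}a_0^{1/2}\in L^1(\M)_+$.

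Next I would invoke the scalar inequality $s-1\leq s\log s\leq |s\log s|$ (valid for all $s\geq 0$, with the second bound coming from $t\leq|t|$ for any real $t$), and promote it to a bounded-operator inequality via functional calculus applied to $X$. Positivity of the trace against the positive $L^1$ factor $c$ then produces $tr((X-\I)c)\leq tr(|X\log X|\,c)$. Since $e_0\leq\I$ gives $c\leq a_0^{1/2}h a_0^{1/2}$, another application of trace positivity yields $tr(|X\log X|\,c)\leq tr(|X\log X|\,a_0^{1/2}ha_0^{1/2}) = \nu(a_0^{1/2}|X\log X|a_0^{1/2})$, where the final equality uses cyclicity and the defining relation $\nu(\cdot)=tr(h\,\cdot\,)$. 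The right-hand side is exactly $\widetilde{S}(h^{1/2}ah^{1/2}\,|\,h^{1/2}a_0h^{1/2})$, and chaining the estimates yields the desired $\|f-g\|_1\leq 2\widetilde{S}(f|g)$.

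The main obstacle is the careful bookkeeping of trace cyclicity and the operator-inequality-to-trace-inequality passage for products mixing an $L^1$ factor (such as $h$) with bounded factors (such as $a_0^{1/2}$, $X$, $e_0$); at each step one must verify that the relevant products are genuine $\tau$-measurable operators sitting in $L^1(\M)$. The assumption $a_0\geq\epsilon\I$ is doing a lot of work here: it makes $a_0^{-1/2}$ and hence $X$ bounded, which in turn guarantees that $|X\log X|$ is bounded, that $c\in L^1(\M)_+$, and that $\widetilde{S}(f|g)$ itself is finite, so the chain of majorisations makes unambiguous sense.
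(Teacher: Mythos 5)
Your proposal is correct and follows essentially the same route as the paper: the identical splitting argument giving $\|f-g\|_1=2\,tr((f-g)e_0)$, the factorisation $f-g=h^{1/2}a_0^{1/2}(X-\I)a_0^{1/2}h^{1/2}$ with $X=a_0^{-1/2}aa_0^{-1/2}$, the scalar bound $s-1\leq s\log s\leq|s\log s|$ pushed through the functional calculus and traced against the positive element $a_0^{1/2}h^{1/2}e_0h^{1/2}a_0^{1/2}$, and finally $e_0\leq\I$ to replace it by $a_0^{1/2}ha_0^{1/2}$, yielding $2\nu(a_0^{1/2}|X\log X|a_0^{1/2})=2\widetilde{S}(f|g)$. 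The paper's proof is exactly this chain of trace estimates, so there is nothing substantive to add.
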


\begin{proof}
The proof is a minor modification of the proof of Proposition \ref{qcs}. 
Let $f=h^{1/2}ah^{1/2}$ and $g=h^{1/2}a_0h^{1/2}$ be as in the hypothesis. For any $s\in \mathbb{R}$ we have that $\theta_s(\chi_{(0,\infty)}(f-g)) =\chi_{(0,\infty)}(\theta_s(f-g))=\chi_{(0,\infty)}(e^{-s}(f-g))=\chi_{(0,\infty)}(f-g)$. Thus $\chi_{(0,\infty)}(f-g)\in \M$. Similarly $\chi_{(-\infty,0]}(f-g)\in \M$. 

If $f\geq g$, then $\|f-g\|_1=tr(|f-g|)=tr(f-g)=tr(f)-tr(g)=1-1=0$. So we can only have $f\geq g$ if $f=g$. The same claim clearly holds 
if $f\leq g$. In either of these cases the required inequality clearly holds. We pass to the case where $f\neq g$. Note that $|f-g|=(f-g)\chi_{(0,\infty)}(f-g)-(f-g)\chi_{(-\infty,0]}(f-g)$. Observe that $\|f-g\|_1=tr(|f-g|)= tr((f-g)\chi_{(0,\infty)}(f-g))+tr((g-f)\chi_{(-\infty,0]}(f-g))$ with 
\begin{eqnarray*}
tr((g-f)\chi_{(-\infty,0]}(f-g))&=&tr(g\chi_{(-\infty,0]}(f-g)) - tr(f\chi_{(-\infty,0]}(f-g))\\
&=&tr(g-g\chi_{(0,\infty)}(f-g)) - tr(f-f\chi_{(0,\infty)}(f-g))\\
&=&[1-tr(g\chi_{(0,\infty)}(f-g))] - [1-tr(f\chi_{(0,\infty)}(f-g))]\\
&=&tr((f-g)\chi_{(0,\infty)}(f-g)).
\end{eqnarray*}
It therefore follows that 
$\|f-g\|_1=2tr((f-g)e_0)$ where $e_0=\chi_{(0,\infty)}(f-g)$. Notice that $s-1\leq s\log(s)$ for all $s\geq 0$. We may use this fact to see that
\begin{eqnarray*}
\|f-g\|_1&=&2tr((f-g)e_0)\\
&=& 2tr((h^{1/2}(a-a_0)h^{1/2}e_0)\\
&=& 2tr((h^{1/2}a_0^{1/2}(a_0^{-1/2}aa_0^{-1/2}-\I)a_0^{1/2}h^{1/2}e_0)\\
&\leq& 2tr((a_0^{-1/2}aa_0^{-1/2})\log(a_0^{-1/2}aa_0^{-1/2})(a_0^{1/2}h^{1/2}e_0h^{1/2}a_0^{1/2}))\\
&\leq& 2tr(|(a_0^{-1/2}aa_0^{-1/2})\log(a_0^{-1/2}aa_0^{-1/2})|(a_0^{1/2}h^{1/2}e_0h^{1/2}a_0^{1/2}))\\
&\leq& 2tr(|(a_0^{-1/2}aa_0^{-1/2})\log(a_0^{-1/2}aa_0^{-1/2})|(a_0^{1/2}ha_0^{1/2}))\\
&=& 2tr(h^{1/2}a_0^{1/2}|(a_0^{-1/2}aa_0^{-1/2})\log(a_0^{-1/2}aa_0^{-1/2})|a_0^{1/2}h^{1/2})\\
&=& 2\nu(a_0^{1/2}|(a_0^{-1/2}aa_0^{-1/2})\log(a_0^{-1/2}aa_0^{-1/2})|a_0^{1/2}).
\end{eqnarray*}
\end{proof}

\section{Convergence to equilibrium for more general settings}\label{genconv}

We remind the reader that we are now concerned with demonstrating convergence to equilibrium for a more general class of generators. We shall see that some specific assumptions regarding the behaviour of the infinitesimal generator, give us access to the quantum Csiszar-Kullback inequality which we may then use to show that in such a case some form of convergence to equilibrium also pertains.

To get some intuition we consider a dissipative thermodynamical system consisting of a single harmonic oscillator in the infinite chain of one  dimensional harmonic oscillators, for details see \cite{D2, emch}.
\begin{example}
Let $\{W(f); f \in \mathcal{H} \}$  be the set of Weyl operators generating CCR algebra, where $\mathcal{H} = L^2(R)$, cf Section 5.2 in \cite{BR2}. In particular, $W(f)W(g) = e^{- \frac{i}{2} im (f,g)} W(f+g)$, $W(f)^* = W(-f)$, $W(0) = \mathbf{1}$, $ R \ni \lambda \mapsto W(\lambda f)$ is weakly continuos.  We note that there is a distinguished $f_0 \in L^2(R)$ cf \cite{emch}. Denote by $\mathfrak{N}$ ($\mathfrak{M}$) the von Neumann algebra generated by $\{ W(zf_0), z \in C \}$ ($\{ W(g); g \perp f_0 \}$ respectively). $\mathfrak{N}$ ($\mathfrak{M}$) is associated with the distinguished harmonic oscillator (with the rest of the infinite chain of oscillators). The following strongly continuous completely positive semigroup 
\begin{equation}
T_t\left( W(zf_0)\right) = W(e^{-\kappa t}zf_0) exp\{- \Theta|z|^2(1 - e^{- 2\kappa t})/4 \}
\end{equation}
where $\kappa$ and $\Theta$ are appropriately chosen positive constants, is describing the quantum diffusion process of the distinguished  oscillator. Note, that $T_t(\left( W(zf_0)\right)  \to constant \mathbf{1}$ as $t$ goes to infinity.
\end{example}

Let $(T_t)$ be a general integrable Markov semigroup on $\M$ (so not necessarily the same as the one in section \ref{FPD}). The next ingredient we will need is to show that under suitable restrictions, $t\to\widetilde{S}(h^{1/2}T_t(a)h^{1/2}|h^{1/2}a_0h^{1/2})$ will be continuous. (Here $a$ and $a_0$ are as before.) 

\begin{remark} Let $(T_t)$ be an adjoint preserving contractive semigroup on a von Neumann algebra $\M$. It is worth observing that continuity of the map $t\to T_t(a)$ ($a\in \M$) in any of the strong, strong*, $\sigma$-strong or $\sigma$-strong* topologies, is equivalent to continuity in any of the other. For example since any $a\in \M$ may be written as the sum of two self-adjoint elements, and since $(T_t)$ preserves adjoints, the claim about the equivalence of strong and strong* continuity follows from the fact that this equivalence holds for selfadjoint choices of $a$. The equivalence of say strong and $\sigma$-strong convergence follows from the facts that these two topologies agree on the unit ball of $\M$ and that $(T_t(a))$ is uniformly bounded for any $a\in \M$.
\end{remark}

We next prove that the relative entropy functional $\widetilde{S}$ will in certain cases be continuous. This mirrors \cite[Lemma 2.8]{AMTU}.

\begin{lemma}\label{sstrong*ent} 
For any two states of the form $h^{1/2}ah^{1/2}$ and $h^{1/2}a_0h^{1/2}$ where $a_0\geq \epsilon\I$ for some $\epsilon$, the 
function $t\to\widetilde{S}(T_t^{(1)}(h^{1/2}ah^{1/2})|h^{1/2}a_0h^{1/2})$ will be continuous whenever $t\to T_t(a)$ is 
$\sigma$-strong* continuous. Note that for states of the above form, we have that 
$$\widetilde{S}(h^{1/2}ah^{1/2}|h^{1/2}a_0h^{1/2})= \nu(a_0^{1/2}|(a_0^{-1/2}aa_0^{-1/2})\log(a_0^{-1/2}aa_0^{-1/2})|a_0^{1/2}).$$ 
In particular if $T_t(a)\to a_0$ strongly, then $\widetilde{S}(h^{1/2}T_t(a)h^{1/2}|h^{1/2}a_0h^{1/2})\to 0$.
\end{lemma}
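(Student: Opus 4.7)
The plan is to reduce the continuity of $t\mapsto\widetilde{S}(T_t^{(1)}(h^{1/2}ah^{1/2})|h^{1/2}a_0h^{1/2})$ to continuity of a continuous functional calculus applied to a uniformly bounded family of positive operators. Set $b_t=a_0^{-1/2}T_t(a)a_0^{-1/2}$. Since $a_0\geq\epsilon\I$ one has $\|a_0^{-1/2}\|\leq\epsilon^{-1/2}$; combined with the contractivity of $T_t$ and the positivity of $T_t(a)$, this gives $b_t\in\M_+$ with the uniform norm bound $\|b_t\|\leq M:=\epsilon^{-1}\|a\|$. Under the formula for $\widetilde{S}$ the quantity of interest is then $\nu(a_0^{1/2}|b_t\log b_t|a_0^{1/2})$.

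The argument proceeds in three stages. First, left and right multiplication by the bounded element $a_0^{-1/2}$ are $\sigma$-strong$^*$-continuous, so the assumed $\sigma$-strong$^*$-continuity of $t\mapsto T_t(a)$ transfers to $t\mapsto b_t$, while keeping the norm uniformly bounded by $M$. Second, the function $f(s)=|s\log s|$ is continuous on $[0,M]$ with $f(0)=0$; and continuous functional calculus applied to positive self-adjoint elements whose norms are uniformly bounded by $M$ preserves strong (and hence, on such bounded sets, $\sigma$-strong) convergence, by Stone-Weierstrass approximation of $f$ in the supremum norm on $[0,M]$ combined with the elementary fact that products on norm-bounded sets of self-adjoint elements are strongly continuous. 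Since $f(b_t)$ is itself self-adjoint and uniformly bounded, and $\sigma$-strong$^*$-convergence coincides with $\sigma$-strong-convergence for self-adjoint families, this yields $\sigma$-strong$^*$-continuity of $t\mapsto f(b_t)=|b_t\log b_t|$. Third, conjugation by $a_0^{1/2}$ is $\sigma$-strong$^*$-continuous and the normal state $\nu$ is $\sigma$-weakly (hence $\sigma$-strongly) continuous, so the composition $t\mapsto\nu(a_0^{1/2}|b_t\log b_t|a_0^{1/2})$ is continuous.

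The final claim follows by exactly the same machinery. If $T_t(a)\to a_0$ strongly then, invoking the Remark preceding the lemma to the effect that strong, strong$^*$, $\sigma$-strong and $\sigma$-strong$^*$ continuities coincide for adjoint-preserving contractive semigroups, within the uniform norm bound we get $b_t\to a_0^{-1/2}a_0 a_0^{-1/2}=\I$ in the same topology. The continuous functional calculus then yields $|b_t\log b_t|\to|\I\log\I|=0$, and applying the continuous functional $c\mapsto\nu(a_0^{1/2}c\, a_0^{1/2})$ gives $\widetilde{S}(h^{1/2}T_t(a)h^{1/2}|h^{1/2}a_0h^{1/2})\to 0$.

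The principal technical point is the $\sigma$-strong$^*$-continuity of the map $b\mapsto |b\log b|$ on the norm-bounded set $\{b\in\M_+\colon\|b\|\leq M\}$. This is handled cleanly because $s\mapsto |s\log s|$ extends continuously across $s=0$, so one is squarely in the regime where polynomial approximation in the supremum norm transfers strong convergence through the functional calculus on norm-bounded self-adjoint families. Once this is in hand, all remaining ingredients --- boundedness of multiplication, $\sigma$-weak continuity of the normal state $\nu$, and the identification of strong and $\sigma$-strong topologies on norm-bounded sets --- are standard and routine.
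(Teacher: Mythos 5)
Your proposal is correct and follows essentially the same route as the paper: a uniform norm bound $\|a_0^{-1/2}T_t(a)a_0^{-1/2}\|\leq\|a\|\,\|a_0^{-1/2}\|^2$ from contractivity, Stone--Weierstrass approximation of $s\mapsto|s\log s|$ on that interval, strong continuity of multiplication on norm-bounded sets to push convergence through the functional calculus, and normality of $\nu$ to finish. The only (immaterial) divergence is in the final claim, where you rerun the same functional-calculus argument with limit $\I$ instead of using the paper's direct operator inequality $|s\log s|\leq\max(s,1)|s-1|$; the paper itself notes that this deduction is available.
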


\begin{proof}
For any $t\geq 0$ we will have that 
$T_t^{(1)}(h^{1/2}ah^{1/2})= h_\nu^{1/2}T_t(a)h_\nu^{1/2}$. So to prove the required continuity for such a state, we need to show that \newline $\nu(a_0^{1/2}|(a_0^{-1/2}T_t(a)a_0^{-1/2})\log(a_0^{-1/2}T_t(a)a_0^{-1/2})|a_0^{1/2})$ converges to 
\newline $\nu(a_0^{1/2}|(a_0^{-1/2}T_s(a)a_0^{-1/2})\log(a_0^{-1/2}T_s(a)a_0^{-1/2})|a_0^{1/2})$ as $t\to s$. This should follow if we can show that $|(a_0^{-1/2}T_t(a)a_0^{-1/2})\log(a_0^{-1/2}T_t(a)a_0^{-1/2})|$ will be $\sigma$-strong* convergent to $|(a_0^{-1/2}T_s(a)a_0^{-1/2})\log(a_0^{-1/2}T_s(a)a_0^{-1/2})|$ as $t\to s$. Note further that the contractivity of $(T_t)$ ensures that $\|T_t(a)\|\leq \|a\|$ for all $t\geq 0$. So by the Stone-Weierstrass theorem we can for any $\epsilon>0$ find a polynomial $p\in C[0,r]$ where $r\geq \|a\|.\|a_0^{-1/2}\|^2$, such that $\max_{t\in[0,r]} \big|p(t)-|t\log(t)|\big|\leq \epsilon$, in which case $$\|p(a_0^{-1/2}T_t(a)a_0^{-1/2})-(a_0^{-1/2}T_t(a)a_0^{-1/2})\log(a_0^{-1/2}T_t(a)a_0^{-1/2})\| \leq \epsilon$$for all $t\geq 0$. But since on bounded sets strong convergence is preserved under multiplication, the fact that 
$T_t(a)\to T_s(a)$ strongly as $t\to s$, ensures that we then also have that $p(a_0^{-1/2}T_t(a)a_0^{-1/2})\to p(a_0^{-1/2}T_s(a)a_0^{-1/2})$ strongly. 

For any $x\in H_\nu$ we then have that 
\begin{eqnarray*}
&&\limsup_{t\to s} \|\,|(a_0^{-1/2}T_t(a)a_0^{-1/2})\log(a_0^{-1/2}T_t(a)a_0^{-1/2})|(x)\\
&&\qquad -|(a_0^{-1/2}T_s(a)a_0^{-1/2})\log(a_0^{-1/2}T_s(a)a_0^{-1/2})|(x)\|\\
&& \leq 2\epsilon\|x\|+\lim_{t\to s} \|[p(a_0^{-1/2}T_t(a)a_0^{-1/2})](x)-[p(a_0^{-1/2}T_s(a)a_0^{-1/2})](x)\|\\
&& = 2\epsilon\|x\|.
\end{eqnarray*} 
Since $\epsilon>0$ was arbitrary, it is clear that $$\lim_{t\to s} |(a_0^{-1/2}T_t(a)a_0^{-1/2})\log(a_0^{-1/2}T_t(a)a_0^{-1/2})|(x)$$ 
$$= |(a_0^{-1/2}T_s(a)a_0^{-1/2})\log(a_0^{-1/2}T_s(a)a_0^{-1/2})|(x).$$We therefore have strong convergence. 

The final statement could be deduced from the continuity we just proved, but there is a very simple direct proof. A consideration of the tangent line at $s=1$ clearly shows that $\log(s)\leq (s-1)$ and $(s-1)\leq s\log(s)$ for all $s>0$, which in turn leads to $|s\log(s)|\leq \max(s,1).|s-1|$. Thus for any $a\in\M_+$ we will have that $|(a_0^{-1/2}T_t(a)a_0^{-1/2})\log(a_0^{-1/2}T_t(a)a_0^{-1/2})|\leq \max(1,\|a\|.\|a_0^{-1/2}\|^2).|a_0^{-1/2}T_t(a)a_0^{-1/2}-\I|$. The strong convergence of $(T_t(a))$ to $a_0$ ensures that $(a_0^{-1/2}T_t(a)a_0^{-1/2}-\I)$ is strongly convergent to 0. Hence by the above inequality, so is $(|(a_0^{-1/2}T_t(a)a_0^{-1/2})\log(a_0^{-1/2}T_t(a)a_0^{-1/2})|)$. So as required, $\nu(a_0^{1/2}|(a_0^{-1/2}T_t(a)a_0^{-1/2})\log(a_0^{-1/2}T_t(a)a_0^{-1/2})|a_0^{1/2})\to 0$.
\end{proof} 

\begin{remark} The important point to note here is the ``locality'' used in the above Lemma,  where ``locality'' is understood in the following sense: fixing $a \in \mathcal{M}$, an individual trajectory $ a \mapsto T_t(a)$ is examined. Further, at the algebraic level $\mathcal{M}$ , $T_t(a)$ is assumed to be strongly$^*$ convergent to $a_0 \geq \epsilon 1$. We note that as there is an isometric identification of $L^1(\mathcal{M})$ with the predual $\mathcal{M}_*$, $h^{1/2} a_0 h^{1/2}$ with $a_0 \geq \epsilon 1$ can be identified with a faithful normal state.

Thus, an application of the quantum Csiszar-Kullback inequality, see Proposition \ref{qcs2}, says that the state $h^{1/2} a h^{1/2}$ is {\bf{ norm convergent}} to the prescribed faithful state.

On the other hand, Section \ref{asymp} indicates that for some cases another strong convergence to equilibrium can appear. To say more on such cases we note that there are spectral conditions on the infinitesimal generator of a $C_0$-semigroup $T_t$ which guarantee the stability of the semigroup $T_t$, i.e. $\lim_{t \to \infty}T_t x  =0$ for all $x$, see \cite{AB} for details.  Then, an application of a version of the Jacobs-de Leeuw- Glicksberg splitting theorem to selected semigroups,  see Section 3 in \cite{Hem},  offers another criterion for convergence to equilibrium.
\end{remark}

The last thing one should note is that this approach includes the dynamics described in Section \ref{FPD}. So suppose we have a state of the form $h^{1/2}ah^{1/2}$ where here $a\in\M$ is selected so that $a\geq \epsilon\I$ for some $\epsilon>0$. Let all notation and assumptions be as in subsection \ref{imply}. Then $h^{1/2}ah^{1/2}\geq \epsilon h$, whence $T^{(1)}_t(h^{1/2}ah^{1/2})\geq \epsilon T^{(1)}_t(h)= \epsilon h^{1/2}T_t(\I)h^{1/2}= \epsilon h$ for each $t$. On taking the limit we have that $h^{1/2}a_0h^{1/2}\geq \epsilon h$ and hence that $a_0\geq \epsilon\I$. The following then holds

\begin{proposition} Let a state $h^{1/2}ah^{1/2}$ be given where $a\in\M$ satisfies $a\geq \epsilon\I$ for some $\epsilon>0$, and let $h^{1/2}a_0h^{1/2}=\lim_{t\to\infty}T^{(1)}_t(h^{1/2}ah^{1/2})$. If $(T_t)$ is as in subsection \ref{imply}, then $\widetilde{S}(h^{1/2}T_t(a)h^{1/2}|h^{1/2}a_0h^{1/2})\to 0$. 
\end{proposition}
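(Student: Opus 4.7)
The goal is to reduce the claim to the ``in particular'' clause of Lemma \ref{sstrong*ent}, which shows that strong operator convergence $T_t(a)\to a_0$ on $H_\nu$ already forces $\widetilde{S}(h^{1/2}T_t(a)h^{1/2}\,|\,h^{1/2}a_0h^{1/2})\to 0$. The required bound $a_0\geq\varepsilon\I$ has already been verified in the paragraph preceding the proposition. Moreover, since $(T_t)$ here is exactly the conservative Markov semigroup of subsection \ref{imply}, the identification $T_t^{(2)}(ah^{1/2})=T_t(a)h^{1/2}$ together with the spectral-resolution argument of Section \ref{asymp} gives
\[
\bigl\|(T_t(a)-a_0)h^{1/2}\bigr\|_2 \longrightarrow 0.
\]
All that remains is to upgrade this convergence on the single vector $h^{1/2}$ to SOT convergence of $T_t(a)\to a_0$ on the whole of $L^2(\M)\equiv H_\nu$.

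For this upgrade I would exploit the right-commutant action. Each $T_t(a)-a_0$ lies in $\M$ with $\|T_t(a)-a_0\|_\infty\leq 2\|a\|$ (using contractivity of $T_t$ and $a_0\leq\|a\|\I$, the latter coming from $h^{1/2}a_0h^{1/2}\leq\|a\|h$ via Lemma \ref{RNLp}). Right multiplication by any $b\in\M$ is a bounded operator on $L^2(\M)$ of norm $\leq\|b\|$ that commutes with left multiplication by elements of $\M$, so
\[
\|(T_t(a)-a_0)(h^{1/2}b)\|_2 \;=\; \|((T_t(a)-a_0)h^{1/2})\,b\|_2 \;\leq\; \|b\|\,\|(T_t(a)-a_0)h^{1/2}\|_2 \;\to\; 0.
\]
Since $h^{1/2}\M$ is norm-dense in $L^2(\M)$ (the case $c=1$ of the density of $h^{c/2}\M h^{(1-c)/2}$), a standard $3\epsilon$-argument combining the uniform operator-norm bound with SOT convergence on a dense subspace propagates the convergence to every $\xi\in L^2(\M)$. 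Lemma \ref{sstrong*ent} then completes the proof.

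The only real subtlety is this final propagation: $L^2$-convergence on the cyclic vector $h^{1/2}$ alone is not yet SOT convergence, and both the density of $h^{1/2}\M$ and the uniform operator-norm bound are needed to make the extension work. Attempting to bypass this through the $L^1$-convergence $h^{1/2}T_t(a)h^{1/2}\to h^{1/2}a_0h^{1/2}$ furnished by Corollary \ref{conveq} yields only weak-$*$ convergence $T_t(a)\to a_0$ in $\M$, which is linear in $T_t(a)$ and therefore strictly too weak to handle the nonlinear expression $|(a_0^{-1/2}T_t(a)a_0^{-1/2})\log(a_0^{-1/2}T_t(a)a_0^{-1/2})|$ appearing inside $\widetilde{S}$; routing the argument through the $L^2$-semigroup $T_t^{(2)}$ is therefore essential.
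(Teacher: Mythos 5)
Your proof is correct, but it takes a different route from the paper. You upgrade the $L^2$-convergence $\|(T_t(a)-a_0)h^{1/2}\|_2\to 0$ supplied by Section \ref{asymp} to strong operator convergence $T_t(a)\to a_0$ on all of $H_\nu$, using the uniform bound $\|T_t(a)-a_0\|_\infty\leq 2\|a\|$, the contractivity of right multiplication on $L^2(\M)$, and the density of $h^{1/2}\M$, and then you simply invoke the final (``in particular'') clause of Lemma \ref{sstrong*ent}. The paper instead argues directly: it uses the scalar inequality $0\leq |s\log s|\leq (s-1)^2+|s-1|$ together with the polar decomposition of $a_0^{-1/2}T_t(a)a_0^{-1/2}-\I$ and Cauchy--Schwarz to bound
$$\widetilde{S}\bigl(h^{1/2}T_t(a)h^{1/2}\,\big|\,h^{1/2}a_0h^{1/2}\bigr)\leq \|a_0^{-1/2}\|_\infty^2\|\eta(T_t(a)-a_0)\|_\nu^2+\|\eta(T_t(a)-a_0)\|_\nu\,\|a_0^{-1/2}\|_\infty\|\eta(a_0^{1/2})\|_\nu,$$
so that convergence follows immediately from the $H_\nu$-convergence of Section \ref{asymp}, with an explicit quantitative estimate of the entropy in terms of the $L^2$-distance as a by-product. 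Your reduction buys economy (it reuses the lemma rather than redoing a functional-calculus estimate) at the cost of the intermediate SOT-upgrade step and of losing that explicit bound; your closing remark that the $L^1$/weak* route of Corollary \ref{conveq} is too weak for the nonlinear expression inside $\widetilde{S}$ is accurate and consistent with why both arguments pass through the $L^2$-semigroup.
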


\begin{proof} Note that for $t\geq 1$ we have that $0\leq t\log(t)\leq t(t-1)= (t-1)^2 + (t-1)$. So for $0<t<1$ we will 
have that $0\leq -\frac{1}{t}\log(t)=\frac{1}{t}\log(\frac{1}{t})\leq (\frac{1}{t}-1)^2 + (\frac{1}{t}-1)$ and hence that  
$0\leq -t\log(t)\leq (t-1)^2 + t(1-t)\leq (t-1)^2+(1-t)$. We therefore have that $0\leq |t\log(t)|\leq (t-1)^2+|t-1|$. With $(a_0^{-1/2}T_t(a)a_0^{-1/2})-\I=v_t|(a_0^{-1/2}T_t(a)a_0^{-1/2})-\I|$ denoting the polar form of $(a_0^{-1/2}T_t(a)a_0^{-1/2})-\I$, it follows that 
\begin{eqnarray*}
&&\widetilde{S}(h^{1/2}T_t(a)h^{1/2}|h^{1/2}a_0h^{1/2})\\
&&= \nu(a_0^{1/2}|(a_0^{-1/2}T_t(a)a_0^{-1/2})\log(a_0^{-1/2}T_t(a)a_0^{1/2})|a_0^{-1/2})\\
&&\leq \nu(a_0^{1/2}[((a_0^{-1/2}T_t(a)a_0^{-1/2})-\I)^2+|(a_0^{-1/2}T_t(a)a_0^{-1/2})-\I|]a_0^{1/2})\\
&&=\nu(a_0^{1/2}[((a_0^{-1/2}T_t(a)a_0^{-1/2})-\I)^2+v_t^*((a_0^{-1/2}T_t(a)a_0^{-1/2})-\I)]a_0^{1/2})\\
&&=\nu((T_t(a)-a_0)a_0^{-1}(T_t(a)-a_0)+a_0^{1/2}v_t^*a_0^{-1/2}((T_t(a)-a_0))\\
&&=\|\pi(a_0^{-1/2})\eta(T_t(a)-a_0)\|_\nu^2+\langle \eta(T_t(a)-a_0), \pi(a_0^{-1/2}v_t)\eta(a_0^{1/2})\rangle\\
&&\leq\|a_0^{-1/2}\|_\infty^2\|\eta(T_t(a)-a_0)\|_\nu^2+\|\eta(T_t(a)-a_0)\|_\nu.\|a_0^{-1/2}\|_\infty\|\eta(a_0^{1/2})\|_\nu\\
&&=\|a_0^{-1/2}\|_\infty^2\|(T^{(2)}_t(\eta(a))-\eta(a_0)\|_\nu^2+\|(T^{(2)}_t(\eta(a))-\eta(a_0)\|_\nu.\|a_0^{-1/2}\|_\infty 
\|\eta(a_0^{1/2})\|_\nu
\end{eqnarray*}
By what we saw in section \ref{asymp}, the right hand side tends to zero as $t\to \infty$, which proves the claim.
\end{proof}

\section*{Appendix: A Dirichlet form approach to Theorem \ref{FPgen}}

Shortly after completing this paper we learned of the recent work of Cipriani and Zegarlinski \cite{CZ} on Dirichlet forms. 
We show how in the case where the operator $\widetilde{V}$ in Theorem \ref{Vcommdelta} is self-adjoint, their approach can be 
used to derive our key result Theorem \ref{FPgen} using the Dirichlet form approach of \cite{CZ}. From a mathematical 
physical point of view the importance of this observation lies in the fact that it shows that the dynamics induced by 
$\cH_q$, is ultimately dynamics induced by some ``energy potential''.

Let $(\M,L^2(\M),L^2_+(\M),J)$ be the Haagerup-Terp standard form of the $\sigma$-finite von Neumann algebra $\M$ 
\cite[Theorem 7.56]{GLnotes}. In this context, $h^{1/2}$ is the cyclic and separating vector representing the faithful normal 
state $\nu$ on $\M$ in the sense that $\nu(x)=tr(h^{1/2}xh^{1/2})=\langle xh^{1/2},h^{1/2}\rangle$ for all $x\in \M$. Within 
in this framework Cipriani and Zegarlinski \cite{CZ} define a {\it Dirichlet form} with respect to the pair $(M,\nu)$ to be a 
quadratic, semicontinuous functional 
$$\mathcal{E}:L^2(\M)\to (-\infty,+\infty]$$with domain $\mathcal{F}:=\{\xi\in L^2(M): \mathcal{E}[\xi]<+\infty\}$ satisfying 
the properties 
\begin{itemize}
\item[i)] $\mathcal{F}$ is dense in $L^2(\M)$,
\item[ii)] $\mathcal{E}[J\xi]=\mathcal{E}[\xi]$ for all $\xi\in L^2(\M)$ (reality),
\item[iii)] and $\mathcal{E}[\xi\wedge\xi_0]\le\mathcal{E}[\xi]$ for all $\xi=J\xi\in L^2(M)$ (Markovianity).
\end{itemize}
As pointed out by Cipriani and Zegarlinski, Dirichlet forms $(\mathcal{E},\mathcal{F})$ of the above type are in a one-to-one correspondence with symmetric Markov semigroups $\{T_t:t\geq 0\}$ with the correspondence given by identification of the operator $-A$ as the infinitesimal generator of the semigroup,  where $(A,\mathrm{dom}(A))$ is the self-adjoint positive definite operator $(A,D(A))$ associated with $(\mathcal{E},\mathcal{F})$ by means of the formula $\mathcal{E}[\xi]=\|\sqrt{A}\xi\|^2_2$ for all $\xi\in \mathcal{F}$.

Let $\widetilde{V}$ and $\mathcal{L}$ respectively be as in Theorem \ref{Vcommdelta} and Proposition \ref{LapMar}. From these 
results we know that both $\widetilde{V}$ and $\mathcal{L}$ generate Markov semigroups. It is clear from Proposition \ref{LapMar} that $\mathcal{L}$ generates a symmetric semigroup. As far as $\widetilde{V}$ is concerned, notice that for the operator $T$ which induces $\widetilde{V}$, the fact that $|T(a)-a|^2\geq 0$ for any $a\in\M$,  ensures that $|T(a)|^2+|a|^2\geq a^*T(a)+T(a^*)a$ and hence that $2\mathrm{Re}(\nu(a^*T(a)))\leq \nu(|T(a)|^2+|a|^2)\leq \nu(T(a^*a)+a^*a)\leq 2\nu(a^*a)$. In terms of the action of $\widetilde{T}$ on $H_\nu$, this fact translates to the claim that $\mathrm{Re}(\langle \widetilde{T}(\eta(a)),\eta(a)\rangle) \leq \langle \eta(a),\eta(a)\rangle$. By continuity this inequality holds for all $\xi\in H_\nu$. This ensures that $-\mathrm{Re}(\langle \widetilde{V}(\xi),\xi\rangle)=\langle \xi,\xi\rangle - \mathrm{Re}(\langle \widetilde{T}(\xi),\xi\rangle)\geq 0$ for all $\xi$. So if $\widetilde{V}$ is self-adjoint, which will be the case if $T=T^\flat$, $-\widetilde{V}$ will be positive definite. Thus in the case where $T=T^\flat$, $\widetilde{V}$ also generates a semigroup of symmetric Markov operators.

So on assuming that $T=T^\flat$, it follows from the above correspondence that
the quadratic forms given by $$\mathcal{E}_{V}[\xi]=\|\sqrt{-\widetilde{V}}\xi\|^2_2=\langle -\widetilde{V}\xi,\xi\rangle\mbox{ for all }\xi\in \mathcal{F}_V$$and 
$$\mathcal{E}_{\mathcal{L}}[\xi]=\|\sqrt{-\mathcal{L}}\xi\|^2_2=\langle -\mathcal{L}\xi,\xi\rangle\mbox{ for all }\xi\in \mathcal{F}_{\mathcal{L}}$$are both Dirichlet forms. But since $\widetilde{V}$ (and therefore $\mathcal{E}_{V}$) is bounded and everywhere defined, the sum $\mathcal{E}_{\mathcal{L}+\widetilde{V}}[\xi]=\|\sqrt{(-\mathcal{L}-\widetilde{V})}\xi\|^2_2=\langle (-\mathcal{L}-\widetilde{V})\xi,\xi\rangle$ of these Dirichlet forms can then be shown to be yet another Dirichlet form, with domain $\mathcal{F}_{\mathcal{L}}$. (Properly the sum of operators inside the inner product is here the so-called form sum, but in the present case the operator sum and form sum agree - see \cite[Proposition 3.26]{GLnotes} and the discussion preceding it.) It therefore follows that $\cH_q=\mathcal{L}+\widetilde{V}$ is the generator of a Markov semigroup. We anticipate a theory of assymetric Dirichlet forms for general von Neumann algebras will yield the same conclusion for the case where $T\neq T^\flat$.


\begin{thebibliography}{8888}
\bibitem{Ar} H. Araki, Relative Entropy of States of Von Neumann Algebras. {\it{Publ RIMS Kyoto Univ}} \textbf{11}(1976),  809-833.

\bibitem{Araki} H. Araki, Dynamics and potentials, \textit{Lett. Math. Phys.} \textbf{82} 297-306 (2007).

\bibitem{AB} W. Arendt \& C.J.K. Batty, Tauberian theorems and stability
of one-parameter semigroups, {\it Trans. Amer. Math. Soc.}, {\bf 306},
836(1988).

\bibitem{AF} A. Arnold, F. Fagnola \& L. Neumann, Quantum Fokker-Planck models: The Lindblad and Wigner approaches, 
\textit{Quantum \ Probability \ and \ Related \ Topics}(2008), 23--48.

\bibitem{ALMS} A.  Arnold,  J.  L. L\'opez,  P.  A. Markowich, J. Soler,  An analysis of quantum Fokker-Planck models: A Wigner function approach, \textit{Rev. Mat. Iberoamericana},  \textbf{20}  (3) 771 - 814, October, 2004.
  
\bibitem{AMTU} A. Arnold , P. Markowich , G. Toscani \& A. Unterreiter, On convex Sobolev inequalities and the rate of convergence to equilibrium for Fokker-Planck type equations, \textit{Communcations in Partial Differential Equations}\textbf{26}(1\&2)(2001), 43-100.

\bibitem{BKP1} Ch. Bahn, Ch. K. Ko, Y. M. Park, Dirichlet forms and symmetric Markovian semigroups on CCR algebras with respect to quasi-free states, \textit{J. Math. Phys.} \textbf{44}, 723 (2003) DOI 10.1063/1.1532 770.

\bibitem{BKP2} Ch. Bahn, Ch. K. Ko, Y. M. Park, Dirichlet forms and symmetric Markovian semigroups on $\textbf{Z}_2$-graded von Neumann algebras, \textit{Rev. Math. Phys.} \textbf{15} 823-845 (2003).

\bibitem{BGL} D. Bakry, I. Gentil \& M. Ledoux, \textit{Analysis and geometry of Markov diffusion operators}, Grundlehren der mathematischen Wissenschaften book series , Volume 348, Springer.

\bibitem{Bra} O. Bratteli, \textit{Derivations, Dissipations and Group Actions on $C^*$-algebras}, Lecture Notes in Mathematics Vol 1229, Springer, 1986.

\bibitem{BR} O. Bratteli \& D.W. Robinson, \textit{Operator Algebras and Quantum Statistical Mechanics 1: 2nd edition}, Texts and Monographs in Physics, Springer-Verlag, 1987.

\bibitem{BR2} O. Bratteli \& D.W. Robinson, \textit{Operator Algebras and Quantum Statistical Mechanics 2: 2nd edition}, Texts and Monographs in Physics, Springer-Verlag, 1997.

\bibitem{BR3} O. Bratteli \& D.W. Robinson, Unbounded derivations of von Neumann algebras,  \textit{Ann. Inst. H. Poincar\'e} Sect. A (N.S.) 25 (1976), no. 2, 139-164.

\bibitem{Cip} F. Cipriani, Dirichlet forms on noncommutative spaces, pp. 161-276 in {\it{Quantum Potential Theory}},
eds. U. Franz, M. Sch{\"u}rmann. LNM. vol 1954, Springer Verlag 2008.

\bibitem{CS} F. Cipriani \& J.-L. Sauvageot, Derivations as square roots of Dirichlet forms, \textit{Journal of Functional Anlysis} \textbf{201} (2003), 78-120.

\bibitem{CZ} F. Cipriani \& B. Zegarlinski, KMS Dirichlet forms, coercivity and superbounded Markovian semigroups, 	arXiv:2105.06000 [math.OA].

\bibitem{DL} E.B. Davies \& JM Lindsay, Noncommutative symmetric Markov semigroups, \emph{Math Z} \textbf{210}(1992), 379-411.

\bibitem{D} E.B. Davies, \emph{One-parameter semigroups}, Academic Press, London, 1980.

\bibitem{D2} E.B. Davies, Diffusion for weakly coupled quantum oscillators, {\it Commun. Math. Phys.} {\bf 27}(1972), 309-325.

\bibitem{Dir} P. A. M. Dirac, \textit{The Principles of Quantum Mechanics - 3rd edition}, Clarendon Press, (1947).

\bibitem{E} E.G. Effros, A matrix convexity approach to some celebrated quantum inequalities, \textit{PNAS}, \textbf{106}(2009), 1006-1008.

\bibitem{EH} E.G. Effros \& F. Hansen, Non-commutative perspectives, \textit{Ann. Funct. Anal.}  \textbf{5}(2014), 74-79.

\bibitem{emch} G.G. Emch, Non-equilibrium quantum statistical mechanics, {\it Acta Physica Austriaca}, Supp. XV (1976), 79-131.

\bibitem{FK1} I.J Fujii \&  E. Kamei, Uhlmann's interpolation methods for operator means, \textit{ Math. Japon} , \textbf{34}(1989),  541-547.

\bibitem{FK2}  I.J. Fujii \&  E. Kamei, Relative operator entropy in noncommutative information theory, \textit{Math. Japon.} \textbf{34}(1989), 341-348.

\bibitem{GIS} D. Guido, T. Isola \& S. Scarlatti, Non-symmetric Dirichlet Forms on
Semifinite von Neumann Algebras, \textit{Journal of Functional Analysis} \textbf{135}(1996), 50--75.

\bibitem{GL1} S. Goldstein \& J.M. Lindsay, $KMS$-symmetric Markov semigroups, {\it Math.\ Z.}\ 
\textbf{219} (1995), 591--608.

\bibitem{GL2} S. Goldstein \& J.M. Lindsay, Markov semigroups KMS-symmetric for a weight, \textit{Math Ann} \textbf{313} 
(1999), 39--67.

\bibitem{GLnotes} S. Goldstein \& L.E.  Labuschagne, {\it Notes on noncommutative  $L^p$ and Orlicz spaces,} {\L}odz University Press, {\L}odz, 2020. ISBN 978-83-8220-385-1, e-ISBN 978-83-8220-386-8.

\bibitem{HJX} U. Haagerup, M. Junge \& Q. Xu, A reduction method for noncommutative $L^p$-spaces and applications, \textit{TAMS}, \textbf{362}(2010), 2125-2165.

\bibitem{Hem} M. Hellmich, Quantum dynamical semigroups and decoherence, {\it Advances  Math. Physics} Volume 2011, Article ID 625978.

\bibitem{KadLiu} R. V Kadison, Z. Liu, A note on derivations of Murray-von Neumann algebras, \textit{PNAS},  \textbf{111} 2087-2093 (2014).

\bibitem{KR} RV Kadison and JR Ringrose, \textit{Fundamentals of the Theory of Operator Algebras: Vol 1}, Academic Press, New York, 1983.

\bibitem{LM} L.E. Labuschagne \& W.A. Majewski, Dynamics on noncommutative Orlicz spaces, \textit{Acta Mathematica Scientia}, \textbf{40B}(5)(2020), 1249-1270.

\bibitem{Maj2} W.A. Majewski, On quantum statistical mechanics: A study guide, \textit{Advances in Mathematical Physics}, vol. 2017, Article ID 9343717.

\bibitem{MS} W.A. Majewski \& R.F. Streater, The detailed balance condition and quantum dynamical maps,
\textit{J. Phys. A.: Math. Gen.} \textbf{31} (1998), 7981-7995.

\bibitem{MZ1} W.A. Majewski \& B. Zegarlinski, On quantum stochastic dynamics I. Spin systems on a lattice, {\it{Math. Phys. Electron. J.}} {\bf{1}} paper 2 (1995), 1-37.

\bibitem{MZ2} W.A. Majewski \& B. Zegarlinski, On quantum stochastic dynamics on noncommutative $L_p$-spaces, {\it{Lett. Math. Phys.}}  {\bf{36}}(1996), 337-349.

\bibitem{HiOhTs} F. Hiai, M. Ohya \& M. Tsukada, Sufficiency, KMS condition and relative entropy in von Neumann algebras, \textit{Pacific J. Math.}\textbf{96}(1981), 99-109.

\bibitem{P1} Y.M. Park, Construction of Dirichlet forms on standard forms of von Neumann algebras, {\it{ Infinite Dim. Anal., Quantum Prob. and related topics}} {\bf{ 3}}(2000), 1-14.

\bibitem{P2} Y.M. Park, Remarks on the structure of  Dirichlet forms on standard forms of von Neumann algebras, {\it{ Infinite Dim. Anal., Quantum Prob. and related topics}} {\bf{ 8}}(2005), 179-197.

\bibitem{Ped} G.K. Pedersen, \textit{$C^*$-algebras and their automorphism groups}, Academic Press Inc, London Mathematical Society Monographs, 1979.

\bibitem{PT}  G.K. Pedersen \& M. Takesaki, The Radon-Nikodym theorem for von Neumann algebras, {\em Acta Math.} {\bf 130} (1973), 53--87.

\bibitem{Sch} L.M. Schmitt, The Radon-Nikodym theorem for $L^p$-spaces of $W^*$-algebras, \emph{Publ RIMS Kyoto Univ} \textbf{22} (1986), 1025-1034.

\bibitem{Petz}  D. Petz, Quasi-entropies for states of a von Neumann algebra, {\em Publ RIMS Kyoto Univ} {\bf 21} (1985), 787--800.

\bibitem{Tak2} M. Takesaki, \textit{Theory of Operator Algebras II}, Springer, New York, 2003.

\bibitem{Terp} M. Terp, {\it $L^p$ spaces associated with von Neumann algebras}. K{\o}benhavs Universitet, Mathematisk Institut, Rapport No 3a (1981).

\bibitem{Y} J. Yngvason, The role of type III factors in Quantum Field Theory, {\it Rep. Math. Phys.}, \textbf{55}(2005), 135-147.
\end{thebibliography}
\end{document}